\begin{document}

\newcommand{\qed}{\hfill{\setlength{\fboxsep}{0pt}
                  \framebox[7pt]{\rule{0pt}{7pt}}} \newline}
\newcommand{\eqed}{\qquad{\setlength{\fboxsep}{0pt}
                  \framebox[7pt]{\rule{0pt}{7pt}}} }
\newcommand{\st}{\,\colon\,}

\newtheorem{theorem}{Theorem}
\newtheorem{lemma}[theorem]{Lemma}         
\newtheorem{corollary}[theorem]{Corollary}
\newtheorem{proposition}[theorem]{Proposition}
\newtheorem{definition}[theorem]{Definition}
\newtheorem{claim}[theorem]{Claim}
\newtheorem{conjecture}[theorem]{Conjecture}
\newtheorem{remark}[theorem]{Remark}
\newtheorem{question}[theorem]{Question}

\newcommand{\proof }{{\bf Proof.\ }}          


\def\SS{{\mathcal S}}
\def\C{{\mathcal C}}
\def\R{{\mathcal R}}
\def\rdeg{\mathop{\rm rdeg}\nolimits}
\def\Uo{{\overline{U_0}}}
\def\Up{{\overline{U_0'}}}
\def\Us{{\overline{U_0^*}}}
\def\Uh{{\overline{U_0^H}}}
\def\Ui{{\overline{U_0^i}}}


\title{Dominating Sets in Triangulations on Surfaces}
\author{Hong Liu\thanks{Department of Mathematics,
University of Illinois Urbana-Champaign, Urbana, Illinois 61801, USA {\tt hliu36@illinois.edu}.}
\and
Michael J.~Pelsmajer\thanks{Department of Applied Mathematics,
Illinois Institute of Technology, Chicago, Illinois 60616, USA {\tt pelsmajer@iit.edu}.
The author gratefully acknowledges support from NSA Grant H98230-08-1-0043 and the Swiss National Science Foundation, Grant No. 200021-125287/1.}
}
\date{\today}
\maketitle

\begin{abstract}
A {\em dominating set} $D\subseteq V(G)$ of a graph $G$ is a set such that each vertex $v\in V(G)$ is either in the set or adjacent to a vertex in the set.
Matheson and Tarjan (1996) proved that any $n$-vertex plane triangulation 
has a dominating set of size at most $n/3$, and conjectured
a bound of $n/4$ for $n$ sufficiently large.
King and Pelsmajer recently proved this for graphs with maximum degree at most~$6$.
Plummer and Zha (2009) and
Honjo, Kawarabayashi, and Nakamoto (2009) extended the $n/3$ bound to triangulations on surfaces.

We prove two related results:
(i) There is a constant $c_1$ such that any
$n$-vertex plane triangulation with maximum degree at most~$6$ has a dominating set of size at most $n/6+c_1$.
(ii) For any surface $S$, $t\ge 0$, and $\epsilon>0$, there exists
$c_2$ such that for any $n$-vertex triangulation on $S$ with at most $t$ vertices of degree other than~$6$,
there is a dominating set of size at most $n(1/6 +\epsilon)+c_2$.

As part of the proof, we also show that any $n$-vertex triangulation of a non-orientable surface has a non-contractible cycle of
length at most $2\sqrt{n}$.
Albertson and Hutchinson (1986) proved that for $n$-vertex triangulation of an orientable surface other than a sphere has a
non-contractible cycle of length $\sqrt{2n}$, but no similar result was known for non-orientable surfaces.
\end{abstract}

\textbf{Keywords}: dominating set, triangulation, graphs on surfaces, non-contractible cycle, non-orientable surface

\textbf{Math. Subj. Class.}: 05C69

\section{Introduction}\label{S:introduction}

In this paper, we only consider graphs that are finite, undirected, and simple (no loops or multiple edges) except where specified otherwise.

\medskip

A {\em dominating set} $D\subseteq V(G)$ of a graph $G$ is a set such that each vertex $v\in V(G)$ is either in the set or adjacent to a vertex in the set. The {\em domination number} of $G$, denoted $\gamma(G)$, is defined as the minimum cardinality of a dominating set of $G$.  Domination is very widely studied (see~\cite{mono} for a recent monograph).  A {\em triangulation} on a given surface is a graph embedded on the surface such that every face is bounded by a triangle.

In 1996, Matheson and Tarjan~\cite{mattar} proved $\gamma(G)\leq n/3$ for any $n$-vertex triangulated disc $G$ (which includes all plane triangulations) and that this bound is sharp.
Plummer and Zha~\cite{Plum} recently extended this bound to triangulations on the projective plane and proved
$\gamma(G)\leq \lceil{n/3}\rceil$ for triangulations on the torus or Klein bottle.
Honjo, Kawarabayashi, and Nakamoto~\cite{HKN} obtained $\gamma(G)\leq n/3$ for
triangulations on the torus and the Klein bottle and also for locally planar triangulations
(triangulations of sufficiently high representativity) on every other surface.
Matheson and Tarjan also gave an infinite class of plane triangulations with $n$ vertices that requires $n/4$ vertices to be dominated.
They conjectured that $\gamma(G)\le n/4$ for every plane triangulation $G$ with a finite number of exceptions, such as the octahedron,
which has $6$ vertices and domination number $2$.
\begin{conjecture}[Matheson and Tarjan~\cite{mattar}]\label{MainConjecture}
There exists $n_0$ such that any $n$-vertex plane triangulation with $n>n_0$ has a dominating set of size at most $n/4$.
\end{conjecture}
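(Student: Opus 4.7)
The plan is to attack Conjecture~1 by combining result~(i) of this paper (which already gives an $n/6+c_1$ bound in the degree-$\le 6$ regime) with a case analysis that handles high-degree vertices separately. By Euler's formula applied to an $n$-vertex plane triangulation $G$, we have $\sum_v (6-\deg(v))=12$, so every vertex of degree $\ge 7$ is ``paid for'' by vertices of degree $\le 5$. This bookkeeping is what makes a $1/4$-type bound at all plausible: high-degree vertices are cheap per vertex dominated ($\deg(v)+1$ vertices for one unit of cost), while excess low-degree vertices are structurally limited.

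First I would select, greedily and with care, a set $S$ of high-degree vertices (say of degree $\ge 7$), chosen so that their closed neighborhoods do not overlap too much; each $v\in S$ then contributes a saving of at least $8-4=4$ against the $n/4$ budget. Next I would consider $G'=G-N[S]$, the residual graph after deleting $S$ and everything dominated by $S$. Most vertices of $G'$ have degree $\le 6$ (in $G$), so if we could apply result~(i) — or more realistically a variant in the spirit of result~(ii), which tolerates $t$ exceptional vertices — to a suitable re-triangulation $G''$ of $G'$, we would obtain a dominating set of $G'$ of size at most roughly $|V(G')|/6 + O(1)$. Adding $S$ and a small number of ``patch'' vertices around the boundary of the deleted region produces a dominating set of $G$. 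An alternative ingredient is the Matheson--Tarjan spanning-tree / three-coloring framework that underlies the $n/3$ bound; one would try to refine it into a partition of $V(G)$ into four dominating sets (possibly with small overlap), using the planar structure to obtain a fourth color class from the high-degree skeleton.

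The key amortization step is to show that $|S|+|V(G')|/6+O(1)\le n/4$. Writing $h=|N[S]|\ge 8|S|$ (distance-$\ge 3$ spreading) and $|V(G')|=n-h$, the requirement becomes $|S|+(n-h)/6 \le n/4$, i.e.\ $|S|(1-\tfrac{h}{6|S|}) \le \tfrac{n}{12}$. This is where Euler's inequality must be used to guarantee enough high-degree vertices, or else to fall back on a pure degree-$\le 6$ analysis and invoke result~(i) directly (which already yields $n/6 \le n/4$).

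The main obstacle is the ``intermediate'' regime: a triangulation with only a moderate excess of degree-$\ge 7$ vertices, clustered so that removing them leaves a residual graph whose structure is not a triangulation and whose boundary behavior spoils a clean application of result~(i) or~(ii). Here the re-triangulation step can introduce new high-degree vertices, and the amortization may fail by $O(\sqrt n)$-type error terms. Bridging this gap — essentially, obtaining a version of result~(i) that is robust under the surgery used to isolate high-degree regions — is, I expect, the reason the conjecture has been open since 1996, and is what a successful proof would have to resolve.
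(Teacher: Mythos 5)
This statement is a \emph{conjecture}, not a theorem: neither this paper nor (to date) the literature contains a proof of it, so there is no ``paper's own proof'' to compare against. The paper proves only the restricted cases captured by Conjecture~\ref{C2} via Theorem~\ref{T:main}, which require that at most $t$ vertices have degree other than~$6$, with $t$ fixed (and, for the bound to be non-vacuous, $t=O(\sqrt{n})$). You should be aware of this before spending effort trying to make the sketch close.

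Your proposal is commendably honest that it has a gap, and the gap you flag is essentially the right one, but let me sharpen two points. First, Euler's formula $\sum_v(6-\deg(v))=12$ does \emph{not} guarantee the existence of any degree-$\ge 7$ vertices: a large triangulation can have no high-degree vertices at all, with the entire deficit of $12$ carried by a handful of degree-$3,4,5$ vertices. In that regime $S=\emptyset$, your amortization contributes nothing, and you must fall back on the degree-$\le 6$ machinery anyway, which is why the paper front-loads that case. Second, and more fundamentally, the number of vertices of degree $\ne 6$ can be $\Theta(n)$ (picture a triangulation with many alternating degree-$5$/degree-$7$ regions). Then the paper's Theorem~\ref{T:main} is vacuous since $c(S,t,\epsilon)=O((g^3+gt^2)/\epsilon)$ exceeds $n$, and the surgery you describe --- delete $N[S]$, re-triangulate the residual, apply result~(ii) --- cannot keep the exceptional count $t$ of the re-triangulated graph bounded, because every boundary vertex created by the cutting becomes an exceptional vertex. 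That feedback between surgery and exceptional count is exactly what the Steiner-tree-plus-grid-map argument in Section~\ref{S:sphere} avoids by never cutting, and it is what a genuine proof of Matheson--Tarjan would need to overcome. Your instinct to attribute the conjecture's difficulty to this obstruction is correct.
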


High degree vertices are helpful when constructing a small dominating set; this motivates the study of Conjecture~\ref{MainConjecture} on graphs with no (or few) high degree vertices.

King and Pelsmajer~\cite{ArxivKingMJP} recently proved Conjecture~\ref{MainConjecture} for plane triangulations with maximum degree at most $6$.
That is, they found $n_0$ such that $\gamma(G)\le n/4$ for any $n$-vertex plane triangulation $G$ with maximum degree at most~$6$
and $n> n_0$.
(The proof has $n_0=4.5 \times 10^6$.)

Our theorems extend the previous results.
First, we show that the degree restriction allows one to prove the following stronger upper bound,
which verifies a conjecture by King and Pelsmajer~\cite{ArxivKingMJP}.
\begin{theorem}\label{T:n/6}
There exists a constant $c$ such that any $n$-vertex plane triangulation with maximum degree at most $6$
has a dominating set of size at most $n/6 +c$.
\end{theorem}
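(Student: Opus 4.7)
By Euler's formula for a plane triangulation, $|E|=3n-6$, so $\sum_v (6-\deg v) = 12$. The hypothesis that the maximum degree is at most $6$ means every summand is non-negative, hence at most $12$ vertices have degree less than $6$; call these the \emph{defects} and let $D^*$ denote this set (so $|D^*|\le 12$), while all other vertices have degree exactly $6$. The plan is to transfer to $G$ the natural density-$1/7$ dominating set of the triangular lattice, paying only $o(n)$ additional vertices for the defects; since $1/7<1/6$, there is a linear slack of $n/42$ available to absorb the error.

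Identify the triangular lattice with $\mathbf{Z}^2$, whose neighbors of $(a,b)$ are $(a\pm 1,b),(a,b\pm 1),(a\pm 1,b\mp 1)$, and define $\phi(a,b)=(a+3b)\bmod 7$. A direct check shows $\{\phi(u):u\in N[v]\}=\mathbf{Z}/7$ for every lattice vertex $v$, so each color class is an efficient dominating set of density $1/7$; moreover a $60^\circ$ lattice rotation acts on $\phi$ by multiplication by $3\pmod 7$. To lift $\phi$ to $G$, pick a degree-$6$ seed $v_0$ and label its six incident edges with the six lattice unit directions in accordance with the rotation from the plane embedding. Propagate by BFS: when a new vertex $v$ is reached along an edge $e$, the direction of $e$ at $v$ is fixed as the opposite of its direction at the predecessor, and the remaining edges at $v$ inherit their directions from the cyclic rotation at $v$. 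Assign $\phi(v)\in\mathbf{Z}/7$ by summing the corresponding lattice shifts along the BFS path to $v$. At degree-$6$ vertices this is automatically consistent with the lattice model; at a defect of degree $d<6$ the rotational deficit $6-d$ produces a nontrivial monodromy $3^{6-d}\pmod 7$ when encircling the defect, so $\phi$ is only locally well-defined.

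To make $\phi$ single-valued we introduce a \emph{branch-cut} set $B\supseteq D^*$: a union of short paths in $G$ that pair up the defects so that the complement $V(G)\setminus B$ is simply connected in the surface, killing all monodromy. Then $\phi$ is well-defined on $V(G)\setminus B$ and the closed-neighborhood property $\{\phi(u):u\in N_G[v]\}=\mathbf{Z}/7$ still holds for every $v\in V(G)\setminus B$. Averaging over the seven color classes gives one of size at most $(n-|B|)/7\le n/7$ that dominates $V(G)\setminus B$; adjoining $B$ yields a dominating set of $G$ of size at most $n/7+|B|$. The theorem follows provided we achieve $|B|\le n/42+c$ for an absolute constant~$c$.

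\textbf{Main obstacle.} The crux is bounding $|B|$. Any construction with $|B|=o(n)$ suffices: the linear slack $n/42$ dominates lower-order error for large $n$, and a suitable constant $c$ absorbs the finitely many small-$n$ cases. The natural construction pairs up the $12$ defects so that the deficits in each pair sum to $0\pmod 6$ (which is possible because the total deficit $12$ is divisible by $6$, matching $3^6\equiv 1\pmod 7$) and connects each pair by a shortest path in $G$. Bounding $|B|$ then reduces to a distance-between-defects estimate; a standard argument using near $6$-regularity and planarity gives graph diameter $O(\sqrt n)$, so $|B|=O(\sqrt n)=o(n)$. Making this localization rigorous, and verifying that the paired branch cuts indeed trivialize the monodromy of $\phi$ on all of $V(G)\setminus B$, is the main technical step.
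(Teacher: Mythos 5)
Your high-level approach is the same as the paper's: both lift the density-$1/7$ efficient dominating set of the infinite $6$-regular triangular grid to $G$ by cutting $G$ open along a small set containing the at-most-$12$ non-$6$-regular vertices, and both then need the cut to be small (your $B$ plays the role of the paper's doubled Steiner tree $T$, and your monodromy/branch-cut discussion corresponds to the paper's Lemma~\ref{C:map} mapping the slit-open disc $G'$ into $G_\infty$). The difference is in how the cut is bounded, and this is where your argument has a genuine gap.

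The gap is the sentence ``a standard argument using near $6$-regularity and planarity gives graph diameter $O(\sqrt n)$, so $|B|=O(\sqrt n)$.'' This is false. Take a long $(3,\ell)$-cylinder and cap each boundary triangle with a single new vertex; the result is a plane (sphere) triangulation with all interior degrees $6$, two cap vertices of degree $3$, six near-cap vertices of degree $5$, and diameter $\Theta(\ell)=\Theta(n)$. The two clusters of defects are at distance $\Theta(n)$ apart, so any connected cut through them has $|B|=\Theta(n)$, and $n/7+|B|$ is then far larger than $n/6$. Your alternative of pairing defects so the deficits in each group sum to $0\pmod 6$ and cutting each group locally would work topologically (the annular complement then has trivial holonomy), but you have not shown that such a partition into \emph{geometrically close} groups with group-deficit $\equiv 0\pmod 6$ always exists, and it is not obvious; e.g.\ if a long thin triangulation had deficit $3$ at one end and $9$ at the other, the annulus would carry holonomy $3^3\not\equiv 1\pmod 7$, and you would again be forced into a linear-length cut. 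In short, the ``long thin'' case is exactly where the paper is forced to abandon the density-$1/7$ lift altogether and instead exhibit a large triangulated cylinder that is dominated directly at density $1/6+o(1)$ (Lemmas~\ref{just need it part 1}--\ref{L:new cylinder}); your write-up assumes this case away, so the ``main obstacle'' you name is indeed the real obstacle and your proposed resolution does not close it.

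Two smaller points worth noting. First, vertices of $V(G)\setminus B$ that are adjacent to $B$ may have a closed neighborhood missing a color, so your single color class does not automatically dominate them; you would need to add those vertices (or their neighborhood) to the dominating set, which is another $O(|B|)$ term — harmless if $|B|$ were small, but worth stating. Second, grouping with deficits summing to $0\pmod 6$ is the right condition since $3$ has order $6$ in $(\mathbf{Z}/7)^\times$, and that part of your monodromy analysis is correct.
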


We prove Theorem~\ref{T:n/6} with $c=1.05 \times 10^7$.

\medskip

Maximum degree at most~$6$ implies that there are at most $12$ vertices of degree less than $6$,
and all other vertices have degree~6.
King and Pelsmajer conjectured that their result can be extended to plane triangulations with a bounded number of vertices with degree not equal to~$6$.

\begin{conjecture}[King and Pelsmajer~\cite{ArxivKingMJP}]\label{C2}
For any constant $t$, there exists $n_t$ such that
any $n$-vertex plane triangulation with $n>n_t$ and
at most $t$ vertices of degree other than~$6$
has a dominating set of size at most $n/4$.
\end{conjecture}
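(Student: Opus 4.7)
The plan is to reduce Conjecture~\ref{C2} to Theorem~\ref{T:n/6} by handling the non-$6$-regular part of the graph separately, keeping result~(ii) of the abstract available as a robust black-box alternative in case the direct route becomes too awkward.

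First I would exploit Euler's formula: in a plane triangulation $\sum_v (6-\deg(v)) = 12$. Combined with the hypothesis that at most $t$ vertices have degree other than~$6$, this forces every vertex to have degree at most some $M = M(t) \le \max(6, 3t-6)$: a vertex of excess degree $x$ contributes $-x$ to the sum, which must be offset by at least $x+12$ ``deficit units'' distributed among the at most $t$ low-degree vertices, each of which can supply at most~$3$ (since the minimum degree in a triangulation is~$3$). Let $H$ denote the set of vertices of degree $\neq 6$, so $|H| \le t$ and $\sum_{v \in H} \deg(v) = O(t^2)$. I would place $H$ into the dominating set $D$; this covers $H \cup N(H)$.

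Next I would deal with the ``holes'' in the planar embedding created by removing $N[H]$. The subgraph $G - N[H]$ has maximum degree at most~$6$, and its faces are either original triangles of $G$ or at most $O(t)$ non-triangular holes whose boundaries together contain $O(t^2)$ vertices. Adding those boundary vertices to $D$, and a further inward layer if necessary, should leave a disjoint union of plane triangulations of maximum degree at most~$6$. Applying Theorem~\ref{T:n/6} to each such piece and summing, one obtains $|D| \le n/6 + O(t^2)$, which is at most $n/4$ once $n$ exceeds some $n_t$ depending on $t$ and the constant in Theorem~\ref{T:n/6}.

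The main obstacle is verifying that the inward peeling genuinely produces plane triangulations rather than near-triangulations with a residual non-triangular outer face, because closed neighborhoods of distinct defective vertices may overlap, share boundary arcs, or even bound the same hole from opposite sides. Careful bookkeeping around each hole is required to show that after removing at most $O(t^2)$ boundary vertices one is left with pieces to which Theorem~\ref{T:n/6} applies verbatim; otherwise one may have to triangulate each hole artificially, which would raise a few boundary degrees above~$6$ and force yet another correction layer into $D$ (still of size $O(t^2)$). As a safer alternative, one may simply invoke result~(ii) of the abstract with $S$ equal to the sphere and any $\epsilon < 1/12$: this yields a constant $c_2 = c_2(t,\epsilon)$ with $\gamma(G) \le n(1/6 + \epsilon) + c_2 \le n/4$ for all $n \ge c_2/(1/12 - \epsilon)$, establishing Conjecture~\ref{C2} at once.
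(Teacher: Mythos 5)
The ``safer alternative'' you offer at the end is precisely how the paper derives Conjecture~\ref{C2}: immediately after Theorem~\ref{T:main} in Section~\ref{S:introduction}, the paper observes that taking $\epsilon<\frac{1}{12}$ (e.g.\ $\epsilon=0.08$) and $n_t=c/(\frac{1}{12}-\epsilon)$ gives $n(\frac16+\epsilon)+c\le n/4$ for all $n>n_t$. So the proposal does contain a correct proof, and it coincides with the paper's.

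Your main plan, the direct reduction to Theorem~\ref{T:n/6} by deleting $N[H]$ and peeling, has a genuine gap that goes beyond the bookkeeping you flag. Theorem~\ref{T:n/6} is stated for \emph{plane triangulations of the sphere}, so every face of each piece you feed it must be a triangle. After deleting $N[H]$ and a boundary layer or two, you have a plane graph of maximum degree at most~$6$ with non-triangular holes; this is not a disjoint union of sphere triangulations, and there is no reason a bounded number of peeling rounds makes it one --- a hole need not close up, and may split, merge, or persist through arbitrarily many layers. Retriangulating a hole artificially raises boundary degrees above~$6$, which lands you back where you started, as you anticipate. Your Euler's formula observations are sound (max degree $\le 3t-6$ for $t\ge4$, $\sum_{v\in H}\deg(v)=O(t)$), but they do not rescue the peeling. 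The machinery the paper actually uses in Section~\ref{S:sphere} is quite different: it builds a Steiner tree $T_0$ on the defective vertices, ``cuts'' along $T$ to obtain a triangulated disk $G'$ in which all interior vertices have degree~$6$, and then invokes the $G_\infty$-mapping lemma (Lemma~\ref{C:map}), switching to a triangulated-cylinder dominating set (Lemmas~\ref{just need it part 1}--\ref{L:new cylinder}) whenever the Steiner tree is too long. That construction is specifically designed to yield a single triangulated disk to which the infinite-grid argument applies verbatim --- something neighborhood deletion does not give you. In short: to prove Conjecture~\ref{C2}, apply Theorem~\ref{T:main} as in your alternative; making the peeling route rigorous would require essentially rebuilding the full apparatus of Section~\ref{S:sphere} rather than a short reduction to Theorem~\ref{T:n/6}.
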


They also gave examples~\cite{ArxivKingMJP} showing that the bound $n/4$ in Conjecture~\ref{C2} cannot be improved beyond $n/6+c$; the examples also imply that the bound in Theorem~\ref{T:n/6} is best possible.

One reason to restrict $t$ rather than the maximum degree is to allow us to consider domination of similar degree-restricted triangulations on various surfaces.
Indeed, it follows easily from Euler's Formula that if $S$ is any surface other than the plane (sphere), projective plane, torus, or Klein bottle, then there are no triangulations on $S$ with maximum degree at most~$6$.

\begin{theorem}\label{T:main}
For any surface $S$, integer $t\ge 0$, and $\epsilon>0$, there exists $c=c(S,t,\epsilon)$ such that
any $n$-vertex triangulation on $S$,
with at most $t$ vertices of degree other than $6$, has a dominating set of
size at most $n(\frac{1}{6}+\epsilon)+c$.
\end{theorem}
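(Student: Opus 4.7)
The plan is to reduce Theorem~\ref{T:main} to the planar, maximum-degree-$6$ case resolved by Theorem~\ref{T:n/6}, by using short non-contractible cycles to cut $S$ down to the sphere. The overhead produced by the cutting is $O(\sqrt{n}) = o(n)$, which is absorbed into the allotted slack $\epsilon n$ once $n$ is large enough; the additive constant $c(S,t,\epsilon)$ absorbs the bounded contribution of the $\le t$ original bad vertices and a finite charge per cut.

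Let $G$ be an $n$-vertex triangulation on $S$ with at most $t$ vertices of degree $\ne 6$. I would iterate the following cut-and-cap step: locate a non-contractible cycle of length at most $2\sqrt{n}$---using Albertson and Hutchinson's $\sqrt{2n}$ bound in the orientable case, and the paper's own $2\sqrt{n}$ bound in the non-orientable case---and cut the embedding along it, duplicating the cycle vertices. Each cut strictly decreases the Euler genus, so after at most $g(S)$ cuts the resulting embedded graph lies on the sphere. Collect every cut-cycle vertex into a set $D_0$; then $|D_0| = O(g(S)\sqrt{n}) = o(n)$. Cap each non-triangular face created by the cuts with a fresh apex vertex joined to every boundary vertex, producing a plane triangulation $G^*$ on $n + O(\sqrt{n})$ vertices, and add the $O(g(S))$ apex vertices and the $\le t$ original bad vertices of $G$ to $D_0$ as well. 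Every vertex of $G^*$ whose degree in $G^*$ is not equal to $6$ now lies in $D_0$.

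To dominate the undominated vertices, I would apply Theorem~\ref{T:n/6} to the planar regions of $G^*$ bounded away from $D_0$. This is where the main obstacle arises: after removing $D_0$ the resulting graph is not itself a plane triangulation---it has holes where $D_0$ sat, and these holes cannot naively be closed up without introducing vertices of degree greater than $6$. The delicate step is to show that each region can be completed to a plane triangulation of maximum degree $6$ by inserting Steiner vertices that are themselves absorbed into $D_0$ (at an additional cost of $o(n)$, since the total boundary length of the holes is $O(\sqrt{n})$ and the neighbors of the holes have already lost edges upon deleting $D_0$, so the fan edges can be added without pushing any non-$D_0$ vertex past degree $6$). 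Theorem~\ref{T:n/6} then contributes at most $n_i/6 + c$ per region of size $n_i$, and summing over the $O(g(S))$ regions yields $n/6 + O(\sqrt{n}) + O(1)$; combined with $|D_0|$ this gives the required bound $n(1/6+\epsilon) + c(S,t,\epsilon)$. The crucial piece of bookkeeping throughout is the Euler-formula observation that the $t$ original bad vertices have total degree at most $6t - 6\chi(S)$, a constant, so the entirety of their local perturbation can be pushed into the additive constant.
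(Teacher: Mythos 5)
Your high-level plan (cut along $O(g)$ short non-contractible cycles, charge the cut vertices and the $\le t$ original bad vertices to an $o(n)$ set, and then dominate the planar remainder) matches the paper's strategy at the top level, but the step you flag as ``delicate'' is where the argument actually breaks. After cutting and capping, the resulting plane triangulation $G^*$ does \emph{not} have maximum degree $6$: the apex vertex of each disk has degree $|C|=\Theta(\sqrt n)$, and even the doubled cycle vertices can have degree $\ne 6$. Your proposed fix---delete $D_0$ and re-fill each hole with a $\Delta\le 6$ triangulated patch of $o(n)$ Steiner vertices---is not merely unproven, it is obstructed by Euler's formula. For a triangulated disk $H$ with boundary cycle of length $L$, one has $\sum_{v\in V(H)}(6-\deg_H v)=2L+6$. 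If every boundary vertex $v$ lost exactly one edge to $D_0$, it has external degree $4$, so allowing it at most $b_v\le 0$ interior edges in the patch forces $\sum_{\rm bdry}(6-\deg_H v)\ge 4L$, which leaves $\sum_{\rm int}(6-\deg_H w)\le 6-2L<0$, impossible when every interior degree is $\le 6$. So there are holes that simply cannot be filled within the maximum-degree-$6$ world, and Theorem~\ref{T:n/6} is not available as a black box.

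The paper avoids this wall by never trying to reduce to the $\Delta\le 6$ case. Instead it proves the more general Lemma~\ref{L:sphere} directly: a sphere triangulation equipped with a set $U=U_0\cup\Uo$ containing all vertices of degree $\ne 6$ (with $\Uo$ staying within distance $d_U$ of $U_0$, Definition~\ref{D:U}) admits a dominating set of size $\frac n6+O(|U_0|\sqrt n+|U_0|d_U+|\Uo|)$. The cutting process is then bookkept with Lemma~\ref{L:C-derived U} so that each cut adds $O(1)$ to $|U_0|$, $O(|C|)$ to $|\Uo|$, and $1$ to $d_U$, and the final bound is $\frac n6+a\sqrt n+b$ with $a,b$ depending only on $g$ and $t$. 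The high-degree vertices are handled inside the sphere lemma (via the auxiliary outerplanar graph $H$ in Lemma~\ref{L:outerplanar}, the modified cylinder argument, and an induction that deletes a cylinder's interior and re-glues), not by forcing them out of the graph. To salvage your proposal you would essentially have to reprove that machinery; proving the plain Theorem~\ref{T:n/6} and trying to transfer it afterward does not work.
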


We prove Theorem~\ref{T:main} with $c=O((g^3+gt^2)/\epsilon)$, where $g$ is the genus of $S$.
Since $n$ is a trivial upper bound for domination number, Theorem~\ref{T:main} is only
meaningful when $c(S,t,\epsilon)<n$, in which case $t=O(\sqrt{n})$.

Since $n(\frac{1}{6}+\epsilon)+c\le \frac{n}{4}$ whenever
$n(\frac{1}{12}-\epsilon)\ge c$, Conjecture~\ref{C2} follows
by setting $\epsilon<\frac{1}{12}$
(for example, let $\epsilon=0.08$) and $n_t=c/(\frac{1}{12}-\epsilon)$.

\medskip

In the proof of Theorem~\ref{T:main}, we need to know that every $n$-vertex triangulation on a surface has a ``small'' non-contractible cycle.
Albertson and Hutchinson~\cite{JoanHutchinson} showed that for any fixed orientable surface,
there is a non-contractible cycle of length at most $\sqrt{2n}$.
We prove a similar bound for non-orientable surfaces.
\begin{theorem}\label{T:non}
Any $n$-vertex triangulation on a non-orientable surface has a non-contractible cycle of length less than or equal to $2\sqrt{n}$.
\end{theorem}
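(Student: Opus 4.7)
My plan is to transfer the problem to the orientable double cover of $S$ and apply the Albertson--Hutchinson bound there. Let $\pi:\tilde S\to S$ be the orientable double cover with deck involution $\sigma$, and let $\tilde G$ be the triangulation of $\tilde S$ obtained by lifting $G$; since $\pi$ is an unramified 2-fold cover, $\tilde G$ is a simple triangulation of $\tilde S$ with exactly $2n$ vertices.

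Assume first that $S$ is not the projective plane. Then $\tilde S$ is orientable of positive genus, so the Albertson--Hutchinson theorem applied to $\tilde G$ produces a non-contractible cycle $\tilde C$ in $\tilde G$ of length at most $\sqrt{2\cdot 2n}=2\sqrt n$. Let $W$ be the closed walk in $G$ of length $|\tilde C|$ obtained by projecting $\tilde C$ edge by edge under $\pi$. Because the induced map $\pi_\ast:\pi_1(\tilde S)\to\pi_1(S)$ is injective (a standard property of covering maps), the homotopy class of $W$ in $\pi_1(S)$ is nontrivial, so $W$ is non-contractible in $S$. Writing $W$ as a concatenation of cycles $C_1\cdot C_2\cdots C_k$ (extracting a closed subwalk each time a vertex is revisited), the identity $[W]=[C_1][C_2]\cdots[C_k]$ in $\pi_1(S)$ forces at least one $C_i$ to be non-contractible; this $C_i$ is then a non-contractible cycle in $G$ of length at most $2\sqrt n$.

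The remaining case where $S$ is the projective plane (whose double cover is the sphere, so Albertson--Hutchinson is vacuous there) I would handle directly via a BFS argument on $G$. Fix a vertex $v$, perform BFS to obtain layers $L_0,L_1,\ldots$, and examine the subsurfaces $D_r\subseteq S$ formed by all faces of $G$ whose vertices lie in the ball $B_r(v)=\bigcup_{i\le r}L_i$. Let $r^\ast$ be the smallest $r$ for which $D_r$ is not an embedded disk; an Euler-formula computation on the triangulated disk $D_{r^\ast-1}$ (combined with $\partial D_{r^\ast-1}\subseteq L_{r^\ast-1}$ and the triangulation constraint) would be used to force $r^\ast=O(\sqrt n)$, while the topological transition to a non-disk subsurface at level $r^\ast$ produces a non-contractible loop in $D_{r^\ast}$, realised as a graph cycle of length $O(r^\ast)$ by routing through the BFS tree down to $v$ and back. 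The main obstacle lies in this projective-plane case: converting the topological non-contractible loop in $D_{r^\ast}$ into a genuine graph cycle of length $O(r^\ast)$, rather than a closed walk of unbounded combinatorial length, requires careful topological bookkeeping and a sharp enough constant to meet the $2\sqrt n$ threshold.
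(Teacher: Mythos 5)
Your argument for genus $g\ge 2$ is essentially identical to the paper's: lift to the orientable double cover (which is $S_{g-1}$), apply Albertson--Hutchinson to get a non-contractible cycle of length $\le\sqrt{2\cdot 2n}=2\sqrt{n}$, project to a non-contractible closed walk, and extract a non-contractible cycle from it. The paper phrases the non-contractibility step via lifting a contracting homotopy (citing Munkres) rather than via injectivity of $\pi_\ast$, but these are the same fact. (Your decomposition $[W]=[C_1]\cdots[C_k]$ is informally stated, since elements of $\pi_1$ require base-point bookkeeping, but the underlying claim---a non-contractible closed walk contains a non-contractible cycle---is standard and is what the paper invokes as well.)

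The genuine gap is the projective plane. You explicitly acknowledge you don't have a complete argument there: your BFS-from-a-vertex sketch gives only $r^\ast=O(\sqrt{n})$ with unspecified constant, and you note the difficulty of turning a non-contractible loop in $D_{r^\ast}$ into a short graph cycle. That is precisely where the work lies, and the paper resolves it with a different strategy. Rather than doing BFS from an arbitrary vertex of $G$, the paper starts from a \emph{minimum-length} non-contractible cycle $C$, cuts along $C$ to obtain a triangulated disk $G'$ whose boundary cycle $C'$ has length $2|C|$, and then does BFS in $G'$ from a vertex $x$ at the ``middle'' of $C'$. The key lemma (Lemma~\ref{proj-plane}) shows that because $C$ is minimum-length, the $j$-th BFS layer from $x$ contains a path between the $j$-th vertices to either side of $x$ on $C'$, and any such path must have length $\ge 2j$; since these paths live in disjoint layers, summing $(2j+1)$ over $j=0,\ldots,\lfloor|C|/2\rfloor$ gives $n\ge(m+1)^2$ and hence $|C|\le 2\sqrt{n}-1$. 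The minimality of $C$ is what makes the layer-path lower bounds tight enough to reach the $2\sqrt{n}$ threshold; your BFS-from-a-vertex scheme has no analogous leverage, which is why your sketch stalls exactly where you say it does.
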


\begin{remark}{\rm
The upper bound of $\sqrt{2n}$ in~\cite{JoanHutchinson} was improved to $O(\sqrt{n/g}\log g)$ in~\cite{Hutch}, where $g$ is the genus of the orientable surface.  However, using this result in our proof
will not reduce $c$ below $c=O((g^3+gt^2)/\epsilon)$ in the orientable case.

We will use the cycles to reduce a surface triangulation to sphere (or plane) triangulations,
as described in Section~\ref{S:def}.
A closely related, well-known method of reducing a graph to a planar graph is to delete a small set of
vertices (a {\em planarizing set}).  Deleting a planarizing set does not necessarily yield a
triangulation, so it is not clear whether it could be used to prove Theorem~\ref{T:main}.  Even if this was possible, it probably would not yield a stronger result, since the best known bound on the size of a minimum planar set is $O(\sqrt{gn})$~\cite{Djidjev,HutchMiller}, which is not much smaller than the total number
of vertices involved in at most $O(g)$ non-contractible cycles that appear during the proof, and
which is the main bottleneck in lowering the value of $c$.
}\end{remark}

By the following construction, the order of magnitude of the bound in Theorem~\ref{T:non}
is best possible:
Take an icosahedron, identify opposite points,
and then triangulate each of the resulting 10 faces with a triangular grid of
$k^2$ triangles.  We get a projective planar triangulation with $10k^2$ faces,
and hence $15k^2$ edges and $n=5k^2+2$ vertices.  Its shortest
non-contractible cycle has length $3k$, which is nearly
$\frac{3}{\sqrt{5}} \sqrt{n}$, or about $1.34164 \sqrt{n}$.

Thus,
the correct bound has order $\sqrt{n}$, and the constant multiple is between
1.34 and 2.

\begin{question}
What is the smallest constant that could replace $2$ in Theorem~\ref{T:non}?
\end{question}

In Section~\ref{S:nonsphere} we explain how to reduce the proof of Theorem~\ref{T:main} to
finding small dominating sets for triangulations of spheres that contain certain pre-specified
subsets of vertices.  We show how to find such a dominating set in Section~\ref{S:sphere},
and prove Theorem~\ref{T:n/6} as well.  The proof of Theorem~\ref{T:n/6}
follow the basic structure of the proof in~\cite{ArxivKingMJP}:
we construct two kinds of dominating sets and show that one must be small enough.
The argument in Section~\ref{S:sphere} that finishes the proof of Theorem~\ref{T:main}
is similar except that one of the sets only dominates part of the graph;
we apply induction to dominate the rest of the graph.

Before that, we review some standard definitions and then introduce some definitions needed for our paper.

\section{Definitions and Preliminaries}\label{S:def}

For any graph $G$, let $V(G)$ and $E(G)$ denote the set of vertices and edges, respectively.
For a vertex $v$ in a graph $G$, $\deg(v)$ and $\deg_G(v)$ each denote the {\em degree} of $v$ in $G$.
$\Delta(G)$ is the maximum degree in $G$.
For any $U\subseteq V(G)$, let $G[U]$ denote the subgraph of $G$ induced by $U$,
which has vertex set $U$ and edge set $\{uv\in E(G) :u,v\in U\}$.

The {\em length} of a path, cycle, or walk $W$ is the number of edges and it is denoted by $|W|$.
Since a walk $W$ may repeat edges, $|W|$ may exceed $|E(W)|$.
If $W$ is a cycle then $|V(W)|=|W|$ and if $W$ is a path then $|V(W)|=|W|+1$.
A {\em chord} of a cycle or walk $W$ is an edge not in $E(W)$ with endpoints in $W$.

The {\em distance} between vertices $u$ and $v$, denoted $d(u,v)$, is the
minimum length of a $u,v$-path.  The distance between two sets (where each set could be a single vertex)
is the minimum length of a path with one endpoint in each set.
For a vertex $v$ in a graph $G$ and any integer $i\ge 0$, let $N_i(v)= \{u\in V(G)\st d(v,u)=i\}$
and let $N_i[v]= \{u\in V(G)\st d(v,u)\le i\}$.  Let $G_i$ be the subgraph of $G$ induced by $N_i[v]$,
that is, $V(G_i)=N_i[v]$ and $E(G_i)$ contains the edges in $G$ that have both endpoints in $N_i[v]$.

A graph is {\em connected} if there
is a $u,v$-path for every pair of vertices $u,v$.  A vertex is a {\em cut-vertex} if its removal
increases the number of components of the graph.  A graph with more than two vertices is
{\em $2$-connected} if it is connected and it has no cut-vertices.
A {\em block} is a maximal subgraph with no cut-vertices.  Each block is either $2$-connected, a
single edge and its endpoints, or an isolated vertex.  For any connected graph $G$, its
{\em block-cutpoint tree} $T$ is defined such that the blocks and cut-vertices of $G$
are the vertices of $T$, and a block $B$ and vertex $v$ of $G$ are adjacent in $T$ if and only if $B$ contains $v$.

For a graph $G$ and vertex set $U\subseteq V(G)$, a {\em Steiner tree} for $U$ is a minimum-size tree that
contains $U$.

\medskip

By a {\em surface}, we generally mean a $2$-manifold without boundary.
For $g\ge 0$, let $S_g$ denote the orientable surface of orientable genus $g$ (a sphere with $g$ handles).
For $g\ge 1$, let $N_g$ denote the non-orientable surface of non-orientable genus $g$ (a sphere with $g$
cross-caps).
By the classification theorem for surfaces, these are all the possible surfaces.
For both $S=S_g$ and $S=N_g$, $g$ is called the {\em genus} of $S$.

Given a graph $G$ on a surface $S$  such that
every face is homeomorphic to a disk, $|V(G)|-|E(G)|+|F(G)|$
is the {\em Euler characteristic} of $S$,
where $F(G)$ is the set of faces.
It is $2-2g$ for $S=S_g$ and $2-g$ for $S=N_g$; thus,
it is independent of the choice of $G$.
For any $G$ we have the {\em degree-sum formula} $\sum_v \deg(v) = 2|E(G)|$.
For triangulations we have $2|E(G)|=3|F(G)|$, so the Euler characteristic equals
$|V(G)|-|E(G)|/3 = \frac{1}{6}\sum_v (6-\deg(v))$.

A closed curve on a surface is either {\em one-sided} or {\em two-sided}, and a surface is {\em orientable}
if and only if it has no one-sided closed curves.
A two-sided closed curve can be {\em contractible}, {\em surface-separating}, or neither; closed curves which
are neither are called {\em essential}.
A {\em plane graph} is a graph drawn in the plane without crossings; a {\em planar graph}
is a graph that can be drawn as a plane graph.
With respect to graph embeddings, the sphere is usually interchangeable with the plane.
See~\cite{MoharThomassen} for further background on graphs on surfaces.

Suppose that $C$ is a non-contractible cycle of $G$ on the surface $S$.
We can obtain a triangulation $G'$ on a related surface $S'$, which we call the \emph{$C$-derived graph of $G$}, as follows.
Cut the surface along $C$, and copy $C$ locally onto each side of the cut so that each vertex
and edge is doubled.
This creates a triangulated surface with one hole if $C$ is
one-sided, and two holes if $C$ is two-sided.
Attach a disk to each hole to create a surface without boundary; this is $S'$.
Add a vertex to each disk with edges to every vertex on the disk boundary; this yields
$G'$, which is a triangulation of $S'$.

If $C$ is one-sided, then $C$ is replaced by a cycle $C_1$ of length $2|C|$, so
$|V(G')|=|V(G)|+|C|+1$, $|E(G')|=|E(G)|+3|C|$, and $|F(G')|=|F(G)|+2|C|$,
where $F(G)$ is the set of faces of the embedded graph $G$.
In this case, the Euler characteristics of $S$ and $S'$ differ by one.
If $C$ is two-sided, then $C$ is replaced by cycles $C_1,C_2$ of length $|C|$, so
$|V(G')|=|V(G)|+|C|+2$, $|E(G')|=|E(G)|+3|C|$, and $|F(G')|=|F(G)|+2|C|$.
In this case, the Euler characteristics of $S$ and $S'$ differ by two.
If $C$ is two-sided, then $C$ may be {\em surface-separating}, in which case
the Euler characteristic of $S'$ equals the sum of the Euler characteristics
of its two components.

Since the Euler characteristic of $S_g$ is $2-2g$
and the Euler characteristic of $N_g$ is $2-g$,
there are only certain possibilities for $S'$,
which we summarize in Table~\ref{t:surfaces}.

\begin{table}[ht]
\begin{center}
\begin{tabular}{c||c|l||c|c}
Case & $S$ & \qquad\quad \ $C$ & $S'$ & \\
\hline
1 & $S_g$ & $2$-sided, non-separating & $S_{g-1}$ & \\
2 & $S_g$ & $2$-sided, surface-separating & $S_{k_1}\cup S_{k_2}$ %
  & $k_1,k_2\ge1$ and $k_1+k_2=g$ \\
3 & $N_g$ & $1$-sided, non-separating  & $N_{g-1}$ & (only if $g\ge 2$) \\
4 & $N_g$ & $1$-sided, non-separating  & $S_{(g-1)/2}$ & (only if $g$ is odd) \\
5 & $N_g$ & $2$-sided, non-separating  & $N_{g-2}$ & \\
6 & $N_g$ & $2$-sided, non-separating  & $S_{(g-2)/2}$ & (only if $g$ is even) \\
7 & $N_g$ & $2$-sided, surface-separating  & $N_{k_1}\cup N_{k_2}$ %
  & $k_1,k_2\ge1$ and $k_1+k_2=g$ \\
8 & $N_g$ & $2$-sided, surface-separating  & $S_{k_1/2}\cup N_{k_2}$ %
  & $k_1/2,k_2\ge1$ and $k_1+k_2=g$ \\
\end{tabular}
\end{center}
\caption{All possible $C$-derived surfaces $S'$, given the non-contractible cycle $C$ on surface $S$ with genus $g\ge1$}
\label{t:surfaces}
\end{table}

The genus of $S_g$ is also called {\em orientable genus} and
the genus of $N_g$ is also called {\em non-orientable genus}.
Although it is somewhat unusual, it will be convenient for us to
say that the {\em non-orientable genus of $S_{g/2}$ is $g$}
(where $g$ is even).  (See Table~\ref{t:surfaces} for motivation.)

An {\em outerplane} graph is a plane graph with all of its vertices incident to its outer face.
For any graph $G$ on a surface $S$, its {\em dual} is a graph on $S$ with a vertex in each face of $G$,
such that whenever two faces share an edge in $G$, the vertices corresponding to those faces are joined
by an edge of the dual.
The {\em weak dual} of an outerplane graph $G$ is the dual minus the vertex in the outer face of $G$;
the weak dual is a forest, and it is a tree if $G$ is $2$-connected.
The outer face of a connected outerplane graph $H$ is bounded by a closed walk; without loss of generality,
we may always assume that the walk is oriented counterclockwise, so that the exterior of $H$ is always to the right.

\medskip

The next two definitions
allow us to state and work with a lemma from~\cite{ArxivKingMJP} (Lemma~\ref{L:1cycleABCDE} in this paper)
in the situation where the maximum degree is not bounded by~$6$.

\begin{definition}{\rm
Consider a closed walk $W=v_0,v_1,\ldots,v_m=v_0$ (indexed by the cyclic group $Z_m$)
that bounds a connected outerplane subgraph $H$ of a plane triangulation $G$, oriented
counterclockwise, so that the exterior of $H$ is always to the right.

For each $i\in Z_m$, let $\rdeg_i(W)$ be
the number of edges incident to $v_i$ from the right---more specifically,
if the edges incident to $v_i$ are ordered such that they are counterclockwise near $v_i$,
then count the ones that are after $v_iv_{i-1}$ and before $v_iv_{i+1}$.
(The edges counted by $\rdeg_i(W)$ lie in the exterior of $H$, and if $v_i$ is not a cut-vertex of $H$,
then $\rdeg_i(W)$ counts every edge incident to $v_i$ which lies in the exterior of $H$.)

The {\em outer degree sequence of $W$} is the cyclic sequence $\rdeg_1(W),\ldots,\rdeg_m(W)$
indexed by $Z_m$.
}\end{definition}

We name a few special types of cyclic sequences that arise in the proof.

\begin{definition}{\rm  The cyclic sequence $d_1,\ldots,d_m$ (indexed by $Z_m$) is
\begin{description}\setlength{\itemsep}{0pt}\setlength{\parsep}{0pt}%
\item[Type A] if $d_i=2$ for all $i\in Z_m$,
\item[Type B] if $d_i=3$ for some $i\in Z_m$ and $d_j=2$ for all $j\in Z_m\setminus\{i\}$,
\item[Type C] if $d_i=4$ for some $i\in Z_m$ and $d_j=2$ for all $j\in Z_m\setminus\{i\}$,
\item[Type D] if $d_i=d_{i+1}=3$ for some $i\in Z_m$ and $d_j=2$ for all $j\in Z_m\setminus\{i,i+1\}$,
\item[Type E] if $d_i=3$ and $d_{j}=1$ for some $i,j\in Z_m$ and $d_k=2$ for all $k\in Z_m\setminus\{i,j\}$.
\end{description}
}\end{definition}

The following definition is from~\cite{ArxivKingMJP}, albeit slightly renamed:

\begin{definition}{\rm
Let $w,\ell,k$ be integers with $w\ge 3$, $\ell\ge 1$, and $0\le k<w$.
A {\em $(w,\ell,k)$-cylinder}, {\em $(w,\ell)$-cylinder}, or {\em triangulated cylinder}
is any plane graph constructed as follows.

Fix an integer $k$ with $0\le k<w$.  Start with
the Cartesian product of a $w$-cycle and a path of length $\ell$.
The vertices can be labeled $z_{a,b}$ with $a$ in the cyclic
group $Z_w$ and $0 \le b\le \ell$.
For each $0\le b< \ell$,
add an edge from $z_{a,b}$ to $z_{a+1,b+1}$ if $0\le a< k$, and
add an edge from $z_{a,b}$ to $z_{a-1,b+1}$ if $k< a\le w$.
All triangles (except those of the form $z_{0,b_0} z_{1,b_1} z_{2,b_2}$ when
$w=3$) are 3-faces.

The parameter $w$ is the {\em width} and $\ell$ is the {\em length}.
The cycles induced by $\{z_{a,0}\st a\in Z_w\}$ and $\{z_{a,\ell}\st a\in Z_w\}$
bound $w$-faces; these cycles are called the {\em boundary cycles}.
The {\em interior} vertices are the vertices not in either boundary cycle.
}\end{definition}

Suppose that $H$ is a $(w,\ell,k)$-cylinder with $0<k<w$ and let $V(H)=\{y_{a,b}\st z\in Z_w, 0\le b\le \ell\}$.
Let $z_{w-a,b}=y_{a,b}$ for each $y_{a,b}\in V(H)$.  With these new vertex names, we see that $H$ is also a $(w,\ell,w-k)$-cylinder.
Thus, we may always assume that $0\le k\le w/2$ for any $(w,\ell,k)$-cylinder.

Note that a $(w,\ell)$-cylinder has exactly $w(\ell+1)$ vertices.

\medskip

We consider one infinite graph: let $G_\infty$ be the infinite $6$-regular triangular grid
(see Figure~\ref{F:infinite-grid} on the left).
There is a pattern of vertices from $G_{\infty}$
that uses every seventh vertex (see the right side of Figure~\ref{F:infinite-grid})
and that dominates $G_{\infty}$;
let $D_\infty\subseteq V(G_{\infty})$ be this (infinite) set of vertices.

\begin{figure}[ht]
\begin{center}
\includegraphics[width=.8\textwidth]{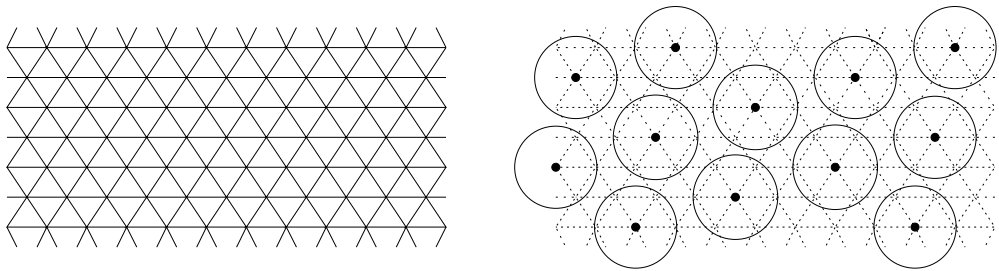}
\end{center}
\caption{$G_\infty$, with a dominating set $D_\infty$ that contains every seventh vertex}
\label{F:infinite-grid}
\end{figure}

\section{Proof of Theorem~\ref{T:main}}\label{S:nonsphere}

Given a triangulation $G$ of an arbitrary surface, we can modify the graph
by repeatedly picking a non-contractible cycle $C$ and replacing its
component $G_C$ with the $C$-derived graph of $G_C$.  This continues until
we have a graph $G'$ that is the disjoint union of triangulations of spheres.
Vertices of degree other than 6 in $G'$ either came from vertices like that
in $G$ or from vertices that at some point were in one of these cycles $C$.
We wish to keep track of these vertices and the way that they cluster.

\begin{definition}\label{D:U}
For any graph $G$, we will always use the notation
$U,U_0,\Uo$ for vertex sets and $d_U$ for an integer that satisfy:
\begin{enumerate}
\item $U$ is the disjoint union of $U_0$ and $\Uo$,
\item $\{v\in V(G): \deg(v)\not=6\} \subseteq U$,
\item every component of $G[U]$ contains at least one vertex of $U_0$, and
\item for each $v\in \Uo$, there is a $u\in U_0$ with $d(v,u)\le d_U$.
\end{enumerate}
(For a graph called $G^*$, we use the notation
$U^*,U_0^*,\Us,d_U^*$ instead, etc.)
\end{definition}

Note that for any triangulation $G$ of a surface, we can satisfy
Definition~\ref{D:U} by letting
$U=U_0=\{v\in V(G): \deg(v)\not=6\}$, $\Uo=\emptyset$, and $d_U=0$.

\medskip

Suppose that $G$ is a triangulation of a surface with $U,U_0,\Uo,d_U$ that satisfy
Definition~\ref{D:U}, and $C$ is a non-contractible cycle in $G$.
The $C$-derived graph $G'$ has two components $G_1,G_2$ if $C$ is surface-separating
and one component if $C$ is non-separating.  We wish to
modify $U,U_0,\Uo,d_U$ so that Definition~\ref{D:U} is satisfied
for the $C$-derived graph and for its components.

\begin{lemma}\label{L:C-derived U}
Suppose that $G$ is a triangulation of a surface with $U,U_0,\Uo,d_U$ that satisfy
Definition~\ref{D:U}, $C$ is a non-contractible cycle in $G$, and $G'$ is the
$C$-derived graph.
If $C$ is 2-sided, let $C_1,C_2$ be the new cycles that
replace $C$ and let $v_1,v_2$ be the new vertices in the disks
bounded by $C_1,C_2$.
If $C$ is 1-sided, then let $C_1$ be the new cycle that
replaces $C$, and let $v_1$ be the new vertex in the disk
bounded by $C_1$.

If $C$ is 2-sided, then Definition~\ref{D:U} is satisfied for $G'$ by
$U'=\left({U\setminus V(C)}\right)\cup V(C_1)\cup V(C_2)\cup\{v_1,v_2\}$,
$U_0'=\left({U_0\setminus V(C)}\right)\cup\{v_1,v_2\}$,
$\Up=\left({\Uo\setminus V(C)}\right)\cup V(C_1)\cup V(C_2)$,
and $d_U'=d_U+1$.  If $G'$ has two components $G_1,G_2$, then
for $i=1,2$, Definition~\ref{D:U} is satisfied for $G_i$
by $U^i=U'\cap V(G_i)$, $U_0^i=U_0'\cap V(G_i)$, $\Ui=\Up\cap V(G_i)$, and $d_U^i=d_U'$.

If $C$ is 1-sided, then Definition~\ref{D:U} is satisfied for $G'$ by
$U'=\left({U\setminus V(C)}\right)\cup V(C_1)\cup\{v_1\}$,
$U_0'=\left({U_0\setminus V(C)}\right)\cup\{v_1\}$,
$\Up=\left({\Uo\setminus V(C)}\right)\cup V(C_1)$,
and let $d_U'=d_U+1$.
\end{lemma}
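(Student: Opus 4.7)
The plan is to verify the four conditions of Definition~\ref{D:U} in turn for the proposed sets $U', U_0', \Up, d_U'$, and afterwards for $U^i, U_0^i, \Ui, d_U^i$ on each component $G_i$ in the surface-separating two-sided subcase. I will do the two-sided case in detail; the one-sided case is identical, simply omitting all references to $V(C_2)$ and $v_2$.

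Conditions 1 and 2 are essentially bookkeeping. Disjointness of $U_0'$ and $\Up$ follows from disjointness of $U_0$ and $\Uo$ together with the observation that $V(C_1)$, $V(C_2)$, and $\{v_1,v_2\}$ are newly created vertices disjoint from $V(G)$; their union equals $U'$ by inspection. For condition~2, every vertex of $V(G')$ not in $V(C_1)\cup V(C_2)\cup\{v_1,v_2\}$ corresponds to a vertex of $V(G)\setminus V(C)$ whose local neighborhood, and hence degree, is preserved, so if its degree is not~$6$ then it lay in $U$ and hence in $U\setminus V(C)\subseteq U'$. The remaining vertices of $V(G')$ are placed into $U'$ by definition.

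Condition~3 is where the real content lies, and I see it as the main obstacle. Let $K$ be a component of $G'[U']$. If $K$ meets $\{v_1,v_2\}$ we are done since $v_1,v_2\in U_0'$. Otherwise $K$ must also be disjoint from $V(C_1)\cup V(C_2)$, because every vertex of $V(C_i)$ is adjacent in $G'[U']$ to $v_i$. Hence $K\subseteq U\setminus V(C)$, and since $G'$ and $G$ agree on the edges spanned by $V(G)\setminus V(C)$, $K$ is actually a component of $G[U\setminus V(C)]$. In particular $K$ lies inside some component $K^*$ of $G[U]$, and by hypothesis $K^*$ contains some $u_0\in U_0$. If $u_0\in K$ then $u_0\in U_0\setminus V(C)\subseteq U_0'$ and we are done; otherwise any $K$-to-$u_0$ path in $G[U]$ must exit $K$ through a vertex of $V(C)$, and truncating it at the first such vertex $c$ gives, after identifying $c$ with its appropriate copy in $V(C_i)$, a path in $G'[U']$ from $K$ to $v_i$, contradicting $v_i\notin K$.

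For condition~4, each $v\in V(C_1)\cup V(C_2)$ is adjacent in $G'$ to one of $v_1,v_2\in U_0'$, giving distance $1\le d_U'$. For $v\in\Uo\setminus V(C)$, take a $v,u_0$-path $P$ of length at most $d_U$ in $G$ with $u_0\in U_0$. If $P$ avoids $V(C)$ and $u_0\notin V(C)$ then the same path survives in $G'$ and ends in $U_0'$; otherwise truncate $P$ at its first vertex $c\in V(C)$, identify $c$ with its copy in the appropriate $V(C_i)$ to get a path of length at most $d_U$ in $G'$, and append the edge to $v_i\in U_0'$ to finish with length at most $d_U+1=d_U'$. Finally, in the surface-separating subcase, conditions 1--3 for $G_i$ are inherited by restricting $U', U_0', \Up$ to $V(G_i)$, and for condition~4 the witness in $U_0'$ produced above lies automatically in $V(G_i)$ because $G_i$ is a component of $G'$, so the distance in $G_i$ matches the distance in $G'$ and is at most $d_U^i$.
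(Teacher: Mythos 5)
Your proof is correct and follows essentially the same approach as the paper's: verify the four conditions of Definition~\ref{D:U} directly, with a minimal-path-to-$U_0\cup V(C)$ style argument for condition~4 and the observation that components meeting $C_i$ also meet $v_i$ for condition~3. You spell out the subcase of condition~3 where a component of $G'[U']$ avoids $C_1\cup C_2\cup\{v_1,v_2\}$ more carefully than the paper does, but the underlying argument is the same.
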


\begin{proof}
Suppose that $C$ is 2-sided and $U', U_0', \Up, d_U'$ are as stated above.
The properties (1) and (2) of Definition~\ref{D:U} are clearly satisfied for $G'$.
(3) is also satisfied, since any component of $G'[U']$ that intersects
$C_1$ ($C_2$) must also contain $v_1$ ($v_2$).
Since property (4) holds for $G$ and $U,U_0,\Uo,d_U$,
every vertex $v$ of $\Uo\setminus V(C)$ is connected to a vertex of $U_0$
by a path in $G$ with at most $d_U$ edges.
Hence, a minimal path $P$ in $G$ from $v$ to $U_0\cup V(C)$ has at most $d_U$ edges.
$E(P)$ gives us a path in $G'$ from $v$ to $U_0'$ or $C_1\cup C_2$, and in the
latter case it is one more edge away from $v_1$ or $v_2$, which is in $U_0'$.
Since $d_U'=d_U+1$, property (4) holds true for $G'$ and $U',U_0',\Up,d_U'$.

Hence, $U',U_0',\Up,d_U'$ satisfy Definition~\ref{D:U} for $G'$.
If $C$ is surface-separating, then intersecting the sets with
each component $G_1,G_2$ clearly gives sets that satisfy
Definition~\ref{D:U} for each component with $d_U'$.

The same sort of argument works if $C$ is 1-sided.
(In this case, $C$ cannot be surface-separating.)
\qed\end{proof}

\begin{remark}\label{R:D'}{\rm
Still using the terminology of Lemma~\ref{L:C-derived U}:
Note that $|U_0'|-|U_0|\le 2$ if $C$ is 2-sided, and
$|U_0'|-|U_0|\le 1$ if $C$ is 1-sided.
Also, note that $|\Up|-|\Uo|\le 2|C|$ (in both cases).

Moreover, suppose that we have a dominating set $D'$
of $G'$ that contains $U'$.  $D'$ contains $V(C_1)\cup V(C_2)\cup\{v_1,v_2\}$
if $C$ is 2-sided and $D'$ contains $V(C_1)\cup\{v_1\}$ if $C$ is 1-sided.
If we replace these vertices in $D'$ by $V(C)$, we get a vertex set $D$
which is a dominating set for $G$ and contains $U$.
Recall that $G'$ has $|C|+1$ or $|C|+2$ more vertices than $G$, depending on whether
$C$ is 1-sided or 2-sided.  Then $|D'|-|D|=|V(G')|-|V(G)|$.
}\end{remark}

Now consider graphs for which each component is a triangulation of a surface.
If we repeatedly find a non-contractible cycle $C$,
get the $C$-derived graph, and apply Lemma~\ref{L:C-derived U}, we will end up
with a graph $G^*$ with $U^*,U_0^*,\Us,d_U^*$ that
satisfies Definition~\ref{D:U}.

Let $g_0,g_1,g_2$ be the number of surface-separating, (non-separating) 1-sided,
and non-separating 2-sided cycles $C$ used during that process.  Let $\sum|C|$
denote the sum of cycle-sizes, taken over all cycles used during the process.
The total number of $2$-sided cycles used is $g_0+g_2$, so
$|V(G^*)|-|V(G)|=\sum|C|+2g_0+g_1+2g_2$. By Remark~\ref{R:D'},
$|U_0^*|-|U_0|\le 2g_0+g_1+2g_2$ and $|\Us|-|\Uo|\le 2\sum|C|$.

Suppose that $D^*$ is a dominating set of $G^*$ that contains $U^*$.
Then by repeatedly applying Remark~\ref{R:D'},
we get a dominating set $D$ of $G$ that contains $U$, such that $|D^*|-|D|=|V(G^*)|-|V(G)|$.

Now, we are ready to start the proof of Theorem~\ref{T:main}.

Suppose that we are given a triangulation $G$ of an arbitrary surface $S$
with $n$ vertices, at most $t$ of which have degree other than~6.  We will
modify the graph and surface until we have a graph $G^*$ which is the disjoint
union of triangulations of spheres, by repeatedly
picking a minimum non-contractible cycle $C$ in any current component $H$ that
is on a non-spherical surface, and replacing $H$ by its $C$-derived graph $H'$.
Let $g_0,g_1,g_2$ and $\sum|C|$ be as defined above.

Let the {\em genus-sum} of a graph be the sum of the genuses of its components.
Observe that the process ends precisely when the genus-sum reach zero.

According to Table~\ref{t:surfaces}:
When $C$ is surface-separating, the number of components increases by one,
but the genus-sum is unchanged.  When $C$ is non-separating,
the orientable genus-sum decreases by $1$ if $C$ is $2$-sided,
and the non-orientable genus-sum decreases by $j$ if $C$ is $j$-sided.

If $G=S_g$, then the cycle $C$ is 2-sided in each step, so $g_1=0$ and
it takes $g$ steps to reduce the genus-sum to zero, so $g_2=g$.
If $G=N_g$, we must have $g_1+2g_2=g$ in order for the genus-sum to be reduced to zero.
In each step where $C$ is surface-separating, the number of components with
non-zero genus is increased by one, so $g_0\le g-1$, and
in the end we have $g_0+1\le g$ components.  

We may assume that
$|U_0|=t$, $|\Uo|=0$, and $d_U=0$, by the comment following Definition~\ref{D:U}.
Then, we may conclude that
$|\Us|\le 2\sum|C|$,
$|U_0^*|\le t+2g_0+g_1+2g_2$, $|V(G^*)|=n+\sum|C|+2g_0+g_1+2g_2$,
$|D^*|=|D|+\sum|C|+2g_0+g_1+2g_2$, and $d_U^*\le g_0+g_1+g_2\le 2g-1$.

We still need to find a dominating set $D^*$ of $G^*$ that contains $U^*$.
To do this, we find a dominating set for each of its components $H$, which
are triangulations of spheres such that $U^*\cap V(H)$, $U_0^*\cap V(H)$,
$\Us\cap V(H)$, and $d_U^*$ satisfy Definition~\ref{D:U}.

\begin{lemma}\label{L:sphere}
Let $G$ be a triangulation of the sphere with $n$ vertices and
suppose that $U,U_0,\Uo,d_U$ satisfies Definition~\ref{D:U}.
Then $G$ has a dominating set $D$ that contains $U$ such that
\[
|D| \le \frac{n}{6} +
3(|U_0|-1)(2\sqrt{3n} + 2d_U +9) +
\frac{3}{2}|\Uo| +
\frac{1}{3}.
\]
\end{lemma}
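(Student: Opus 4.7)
The plan is to enclose the required set $U$ inside a small ``interior'' region $H \subseteq G$ bounded by a short closed walk $W$, include $U$ and a few auxiliary vertices from $H$ in $D$, and dominate the exterior of $H$ using the infinite grid pattern $D_\infty$ of Figure~\ref{F:infinite-grid} via the King--Pelsmajer machinery. First I would build $H$: take a Steiner tree $T$ for $U_0$ in $G$, and note that since $G$ is a sphere triangulation, a BFS-layer argument (using $|N_i[v]| \ge 1 + 3i(i+1)$ in a plane triangulation) gives $\mathrm{diam}(G) \le \sqrt{3n}$, so $|E(T)| \le (|U_0|-1)\sqrt{3n}$. For each $v \in \Uo$, attach a shortest $v$-to-$T$ path of length at most $d_U$ using Definition~\ref{D:U}(4), then thicken the result by one triangulating layer of $G$ to obtain the connected plane subgraph $H$. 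The boundary walk $W$ of $H$ has length roughly $2(|U_0|-1)(\sqrt{3n}+d_U)$ plus lower-order corrections, and $V(H) \supseteq U$ by construction, so every vertex outside $V(H)$ has degree $6$ in $G$.

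Next I would perform local surgery on $H$---absorbing cut-vertices into the interior, enlarging short dead-end branches, inserting an extra triangle at bad corners---so that the outer degree sequence of $W$ falls into one of the Types~A--E from the definitions section. Each such modification costs only $O(1)$ vertices per $U_0$-branch, so $|W|$ stays of order $(|U_0|-1)(\sqrt{n}+d_U)$ and $|V(H)|$ is bounded comparably. Since the exterior of $H$ consists entirely of degree-$6$ vertices, I would then invoke Lemma~\ref{L:1cycleABCDE} on the outerplane subgraph $H$ with its Type~A--E boundary $W$, which matches the $D_\infty$ pattern to the exterior and yields a dominating set of the exterior of size at most $(n - |V(H)|)/6$ plus a boundary overhead proportional to $|W|$; this is where the $3(|U_0|-1)(2\sqrt{3n} + 2d_U + 9)$ term would arise, with the constant $9$ absorbing the per-branch tidying overhead from Step~2.

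To finish, I would include $U$ in $D$ and add a small number of extra vertices to dominate the remaining interior vertices of $H$, producing the $\frac{3}{2}|\Uo|$ contribution (the forced $|\Uo|$ plus roughly half as much again to handle the degree-$6$ companions introduced by the thickening). If the interior still contains a large undominated subgraph---which is hinted at by the introduction's remark that one of the two sets only dominates part of the graph---I would peel it off as a smaller sphere triangulation with updated $U_0', \Uo', d_U'$ and recurse, using the same inductive scheme that Section~\ref{S:sphere} is described as employing. Summing the contributions from each step and accounting for the $\frac{1}{3}$ rounding artifact yields the stated inequality.

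The main obstacle, I expect, is Step~2: guaranteeing that local surgeries can always fit $W$ into one of Types~A--E while adding only $O(1)$ vertices per $U_0$-branch, because branches of $T$ and the attached $\Uo$-paths can tangle or create long narrow corridors that are awkward to straighten and that might force $W$ into a degree pattern outside the five allowed types. A secondary difficulty is the case $|U_0|=1$, where the coefficient $(|U_0|-1)$ vanishes and a direct star-based construction---a single Steiner center with the $\Uo$-paths emanating from it---must suffice to achieve a bound of $n/6 + \frac{3}{2}|\Uo| + \frac{1}{3}$.
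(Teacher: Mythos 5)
The proposal departs from the paper's actual construction at several points, and at least two of these are fatal.

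First, the diameter bound $\mathrm{diam}(G)\le\sqrt{3n}$ that you use to control $|E(T)|$ is false for general sphere triangulations: stacking tetrahedra (or any long, thin triangulated band) produces sphere triangulations of diameter $\Theta(n)$. Quadratic growth of BFS layers only happens in regions of all-degree-$6$ vertices. The paper exploits this \emph{locally}: it looks at the longest subdivided edge $P_0$ of the Steiner tree $T_0$, picks its midpoint $x$, and uses Lemma~\ref{L:P and N_i[x]} to show no vertex of $U$ lies near $x$, so $N_r[x]$ is a triangulated hexagon with $|N_{r-1}[x]|>3(r-1)^2$. This yields either a short $P_0$ (Case~1, in which the direct bound $|D|\le\frac16(n+9|V(T)|-7)$ already suffices), or --- and this is the central idea you are missing --- a long triangulated cylinder (Lemma~\ref{just need it part 1}). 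In Case~2 the paper dominates the cylinder's interior cheaply (Lemmas~\ref{just need it part 3} and~\ref{L:new cylinder}), deletes the interior, glues the two boundary cycles together to make a \emph{smaller sphere triangulation} $G^*$ with $|U_0^*|\le|U_0|$ and $|\Us|\le|\Uo|$, and recurses. Your proposed recursion on ``the interior of $H$'' does not shrink $U$ and does not exploit any analogous structure.

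Second, you misapply Lemma~\ref{L:1cycleABCDE}. That lemma produces a Type~A--E cycle inside the degree-$6$ hexagonal neighborhood of a single interior vertex $x$; it has nothing to do with the boundary of a Steiner-tree neighborhood, and the $D_\infty$ matching is carried out by the entirely separate Lemma~\ref{C:map}, which requires only that interior vertices of the triangulated disk have degree~$6$ and imposes \emph{no} Type~A--E condition on the boundary. Consequently, your Step~2 (``local surgery'' to force the boundary of a thickened tree into Types~A--E) is not needed for the grid-matching, and --- as you yourself suspect --- it is hopeless anyway: a thickened tree with several branches has a boundary walk that winds in and out and is nowhere near the five allowed types, and no $O(1)$-per-branch modification fixes this. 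The paper's actual construction avoids the problem entirely: it does not thicken $T$ at all but \emph{cuts $G$ open along $T$}, doubling each tree edge and copying each tree vertex $\deg_T(v)$ times, to produce a triangulated disk $G'$ that Lemma~\ref{C:map} maps straight into $G_\infty$; the overcount is then controlled by the outerplanar auxiliary graph of Lemma~\ref{L:outerplanar}.

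Your instinct about the $|U_0|=1$ case is correct and is handled in the paper exactly as you guess: in that case $|V(T)|=1+|\Uo|$ and the direct bound gives $|D|\le\frac n6+\frac32|\Uo|+\frac13$. But without the $P_0$-based bound on $|V(T_0)|$, Lemma~\ref{L:P and N_i[x]}, and the cylinder-deletion recursion, the argument for $|U_0|>1$ does not close.
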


We defer the proof of Lemma~\ref{L:sphere} until later;
For now, we assume that it is true.
Applying Lemma~\ref{L:sphere} to each component of $G^*$,
we get dominating sets for each component whose union $D^*$
is a dominating set for $G^*$.  Hence,

\[
|D^*|\le \sum_{H}\left(
\frac{|V(H)|}{6} +
3(|U_0^H|-1)(2\sqrt{3|V(H)|} + 2d_U^* +9) +
\frac{3}{2}|\Uh| +
\frac{1}{3}\right),
\]
where the sum is taken over all components $H$ of $G^*$.
Note that $\sum|V(H)|=|V(G^*)|$, $\sum|U_0^H|=|U_0^*|$, $\sum|\Uh|=|\Us|$,
and $\sum (|U_0^H|-1)(2\sqrt{3|V(H)|})\le
 \sum (|U_0^H|-1)\sum (2\sqrt{3|V(H)|})$.
There are $g_0+1$ components $H$ of $G^*$, so
$\sum\sqrt{|V(H)|} \le
 (g_0+1) \sqrt{(\sum |V(H)|)/(g_0+1)} = \linebreak[2]
 \sqrt{(g_0+1)|V(G^*)|}$, and we get
\[ |D^*|\le
\frac{|V(G^*)|}{6} +
3\left(|U_0^*|-g_0-1\right)
\left( 2\sqrt{3(g_0+1)|V(G^*)|} + 2d_U^* +9 \right) +
\frac{3}{2}|\Us| +
\frac{1}{3}(g_0+1).
\]

Note that $|U_0^*|-g_0-1\le t+g_0+g_1+2g_2-1$.
Also, $|D^*|=|D|+\sum|C|+2g_0+g_1+2g_2$,
$d_U^*\le 2g-1$, $|U_0^*|\le t+2g_0+g_1+2g_2$,
$|\Us|\le2\sum|C|$, and $g_0\le g-1$, so
\[ |D|\le
\frac{|V(G^*)|}{6} +
3\left(t+g+g_1+2g_2-2\right)
\left( 2\sqrt{3g|V(G^*)|} + 4g +7 \right) +
2\sum|C| -g_1-2g_2+\frac{1}{3}.
\]
Also, $|V(G^*)|=n+\sum|C|+2g_0+g_1+2g_2$.
To continue, we need an upper bound for $\sum|C|$.

As stated in Section~\ref{S:introduction}, every $n$-vertex triangulation of a
non-spherical surface has a
non-contractible cycle $C$
with $|C|\le \sqrt{2n}$ if the surface is orientable
and $|C|\le 2\sqrt{n}$ if the surface is non-orientable.

Let $f(m)=f^{(1)}(m)=\lfloor{m+\sqrt{2m}+2}\rfloor$ for $m\ge 3$.
For $i>1$ and $m\ge 3$, let $f^{(i)}(m)=f(f^{(i-1)}(m))$.
Given a triangulation of an orientable surface with at most $m$ vertices,
there is a non-contractible cycle $C$ such that the $C$-derived graph
has at most $f(m)$ vertices.
If $G$ is embedded on $S_g$, then
there are $g_2+g_0$ cycles considered during the process that produces $G^*$,
and at every stage of the process, every component is orientable.
Therefore, $|V(G^*)|\le f^{(g_2+g_0)}(n)$, where $n=|V(G)|$.

The size of each $C$ in the process depends on the size of the component $H$
containing $C$, which could be part of a graph at any stage in the process
before the end (when $G^*$ has been obtained).
The last $C$ considered in the process cannot be surface-separating, so
each time we consider a new cycle $C$,
at most $g_2-1$ non-separating cycles $C$ have already been considered.
Also,
surface-separating cycles produce two components which
each have fewer vertices than their source component.  Therefore,
the number of vertices in a component $H$ that contains any of the cycles $C$
is at most $f^{(g_2-1)}(n)$.  Since there are $g_2+g_0$ cycles $C$ considered
during the entire process, we obtain
$\sum |C| \le (g_2+g_0) \sqrt{2 f^{(g_2-1)}(n)}$.
Since $g_2=g$ and $g_2+g_0\le 2g-1$, we get
$|V(G^*)|\le f^{(2g-1)}(n)$ and $\sum |C| \le (2g-1) \sqrt{2 f^{(g-1)}(n)}$.

Let $F(m)=F^{(1)}(m)=\lfloor{m+2\sqrt{m}+1}\rfloor$ for $m\ge 3$.
For $i>1$ and $m\ge 3$, let $F^{(i)}(m)=F(F^{(i-1)}(m))$.
Note that $F(m)\ge f(m)$ for all $m\ge 3$.
Given a triangulation of a non-orientable surface with at most $m$ vertices,
there is a non-contractible cycle $C$ such that the $C$-derived graph
has at most $F(m)$ vertices.
If $G$ is embedded on $N_g$, then there are $g_1+g_2+g_0$ cycles $C$
considered during the process, and there can be orientable and
non-orientable components during the process.
Since $f(m)\le F(m)$,
we have $|V(G^*)|\le F^{(g_1+g_2+g_0)}(n)$.
As before, taking the $C$-derived graph does not increase the size of any component $H$
which contains some non-contractible cycle from later in the process, if $C$ is
surface-separating cycles or if $C$ is the last non-separating cycle considered.
Continuing as before, in this case we can obtain
$\sum |C| \le (g_1+g_2+g_0) 2 \sqrt{F^{(g_1+g_2-1)}(n)}$.
Since $g_1+g_2+g_0\le 2g-1$ and $g_1+g_2-1\le g-1$, we get
$|V(G^*)|\le F^{(2g-1)}(n)$ and $\sum |C| \le (2g-1) 2 \sqrt{F^{(g-1)}(n)}$.

Using these bounds for $|V(G^*)|$ and $\sum|C|$,
we can rewrite our bound for $|D|$
in terms of $f^{(i)}(n)$ when $G$ is embedded on an orientable surface,
and in terms of $F^{(i)}(n)$ when $G$ is embedded on an orientable surface.

Thus, we need to find bounds on $f^{(i)}(n)$ and $F^{(i)}(n)$.

\smallskip\noindent
{\bf Claim:}
$f^{(i)}(n)\le n+i\sqrt{2n}+i^2+3i-2$ and
$F^{(i)}(n)\le n+2i\sqrt{n}+i^2$.

\noindent
{\it Proof by induction:}  For $i=1$, $f^i(n)=f(n)\le n+\sqrt{2n}+1+3-2$, as desired.

Suppose that $i\ge 2$.
By induction, $f^{(i-1)}(n)\le n+(i-1)\sqrt{2n}+(i-1)^2+3(i-1)-2 =
n+i\sqrt{2n}-\sqrt{2n}+ i^2+i-4$.
Therefore, $f^{(i)}(n)= \lfloor{f^{(i-1)}(n) +\sqrt{2f^{(i-1)}(n)}+2}\rfloor$ is at most
\[
n+i\sqrt{2n}-\sqrt{2n}+ i^2+i-4+\sqrt{2\left[n+i\sqrt{2n}-\sqrt{2n}+ i^2+i-4\right]}+2.
\]
This is at most the claimed upper bound $n+i\sqrt{2n}+i^2+3i-2$ if and only if
\[
\sqrt{2\left[n+i\sqrt{2n}-\sqrt{2n}+ i^2+i-4\right]} \le \sqrt{2n}+2i.
\]
Equivalently (squaring both sides), this is
\[
2\left[n+i\sqrt{2n}-\sqrt{2n}+ i^2+i-4\right] \le 2n +4i\sqrt{2n}+4i^2.
\]
which is true if and only if
\[
0 \le (2i+2)\sqrt{2n}+2i^2-2i+8,
\]
which is true.

To prove $F^{(i)}(n)\le n+2i\sqrt{n}+i^2$,
observe that
$F(n)=\lfloor{(\sqrt{n}+1)^2}\rfloor$ and that
the desired bound can be rewritten as
$F^{(i)}(n)\le (\sqrt{n}+i)^2$.

When $i=1$, we have $F^{(1)}(n)=F(n)\le (\sqrt{n}+1)^2$, as desired.
Suppose that $i\ge 2$.
By induction, $F^{(i-1)}(n)\le (\sqrt{n}+i-1)^2$.
Then $F^{(i)}(n) = (\sqrt{F^{(i-1)}(n)}+1)^2
\le ((\sqrt{n}+i-1)+1)^2 = (\sqrt{n}+i)^2$.
Thus the claim is proved.

\medskip

Note that $\sqrt{f^{(i)}(n)}\le\sqrt{F^{(i)}(n)}\le \sqrt{n} + i$.
Now we are ready to finish the proof of Theorem~\ref{T:main}.

\medskip

First, consider the case that $G$ is a triangulation of $S_g$ ($g\ge 1$).  We have
\[|V(G^*)|\le f^{(2g-1)}(n) \le n + (2g-1)\sqrt{2n}+(2g-1)^2+3(2g-1)-2,\]
\[\sqrt{|V(G^*)|}\le \sqrt{f^{(2g-1)}(n)} \le \sqrt{n}+2g-1,\]
and
\[\sum|C|\le (2g-1)\sqrt{2f^{(g-1)}(n)}\le (2g-1)\sqrt{2}(\sqrt{n}+g-1).\]
Together with
\[ |D|\le
\frac{|V(G^*)|}{6} +
3\left(t+g+g_1+2g_2-2\right)
\left( 2\sqrt{3g|V(G^*)|} + 4g +7 \right) +
2\sum|C| -g_1-2g_2+\frac{1}{3},
\]
$g_0\le g-1$, $g_1=0$, and $g=g_2$,
we can get $|D|\le \frac{n}{6}+a\sqrt{n}+b$, where
\[a=
\frac{\sqrt{2}}{6}(2g-1)+6\sqrt{3g}(t+3g-2)+2\sqrt{2}(2g-1)
\]
and
\[b=
3(t+3g-2)\left(2\sqrt{3g}(2g-1)+4g+7\right)+2\sqrt{2}(2g-1)(g-1)+\frac{2}{3}g^2+\frac{7}{3}g-\frac{1}{3}
.\]
Note that $a$ and $b$ depend only on $g$ and $t$, with $a=O(g^{1/2}(g+t))$ and $b=O(g^{3/2}(g+t))$.

Next, consider the case that $G$ is a triangulation of $N_g$.  We have
\[ |V(G^*)|\le F^{(2g-1)}(n) \le n+ 2(2g-1)\sqrt{n}+ (2g-1)^2, \]
\[\sqrt{|V(G^*)|}\le \sqrt{n}+2g-1,\]
and
\[\sum|C|\le (2g-1)2\sqrt{F^{(g-1)}(n)}\le 2(2g-1)(\sqrt{n}+g-1).\]
Using the same bound for $|D|$ with $g_0\le g-1$ and $g_1+2g_2=g$,
we again can get $|D|\le \frac{n}{6}+a\sqrt{n}+b$, but
this time with
\[a=
\frac{1}{3}(2g-1)+6\sqrt{3g}(t+2g-2)+2(2g-1)
\]
and
\[b=
3(t+2g-2)\left(2\sqrt{3g}(2g-1)+4g+7\right)+4(2g-1)(g-1)+\frac{2}{3}g^2-\frac{2}{3}g+\frac{1}{2}
.\]
Again, $a$ and $b$ depend only on $g$ and $t$, with $a=O(g^{1/2}(g+t))$ and $b=O(g^{3/2}(g+t))$.

If $G$ is a triangulation of the sphere $S_0$, a direct application of Lemma~\ref{L:sphere}
with $|U_0|=t$, $|\Uo|=0$, and $d_U=0$ gives
$|D|\le \frac{n}{6}+a\sqrt{n}+b$ with
$a=6\sqrt{3}(t-1)$ and $b=27(t-1)+\frac{1}{3}$.
So $a=O(g^{1/2}(g+t))$ and $b=O(g^{3/2}(g+t))$ is valid for all surfaces.

To prove Theorem~\ref{T:main}, it suffices to have
$\frac{n}{6}+a\sqrt{n}+b \le
n(\frac{1}{6}+\epsilon)+c$, for some $c=c(S_g,t,\epsilon)$.
Let $c=a^2/(4\epsilon)+b$.
Then the previous inequality is equivalent to
$0\le n\epsilon -a\sqrt{n} +a^2/(4\epsilon)$,
which is true since the right side equals
$\left(
\sqrt{n\epsilon}-\frac{a}{2\sqrt{\epsilon}}
\right)^2$.

Since $a^2=O(g^3)$ and $b=O(g^{5/2})$ for $g\ge t$,
and $a^2=O(gt^2)$ and $b=O(g^{3/2}t)$ for $t\ge g$,
we get $a^2+b=O(a^2)=O(g^3+gt^2)$.
This yields $c=O(a^2/\epsilon)=O((g^3+gt^2)/\epsilon)$.

\begin{remark}{\rm
There must be a constant $c'$ such that
$c=\left(c'(g^3+gt^2)/\epsilon\right)\left(1+o(1)\right)$.
This constant is different for $S_g$ and $N_g$.

Using the stronger Hutchinson bound of $|C|\le O(\sqrt{n/g}\log g)$
instead of $|C|\le \sqrt{2n}$ for the orientable case
would improve $c'$, but not $O((g^3+gt^2)/\epsilon)$.

The argument for the non-orientable case would also work for
$S_{2g}$, where $g$ is non-orientable genus.  But the constant $c'$
we would get is not as good as the current $c'$ for the orientable case.
}\end{remark}

\section{Proof of Lemma~\ref{L:sphere} and Theorem~\ref{T:n/6}}\label{S:sphere}

For convenience, we restate Lemma~\ref{L:sphere} and Theorem~\ref{T:n/6} as a single lemma.

\begin{lemma}\label{L:sphereplus}
Let $G$ be a triangulation of the sphere with $n$ vertices and
suppose that $U,U_0,\Uo,d_U$ satisfies Definition~\ref{D:U}.
Then $G$ has a dominating set $D$ that contains $U$ such that
\[
|D| \le \frac{n}{6} +
3(|U_0|-1)(2\sqrt{3n} + 2d_U +9) +
\frac{3}{2}|\Uo| +
\frac{1}{3}.
\]
Moreover, if $\Delta(G)\le6$ and $U=U_0=\{v\in V(G):\deg(v)\not=6\}$, then
\[
|D| \le \frac{n}{6} +
1.05\times 10^7.
\]
\end{lemma}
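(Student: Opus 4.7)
The plan is to follow the two-candidate approach used in~\cite{ArxivKingMJP}: build two dominating sets and show that one satisfies the bound. For the general part of Lemma~\ref{L:sphereplus}, one candidate fully covers a \emph{band} around $U$ while the rest of $G$ is handled by recursion; for the Moreover part, both candidates dominate all of $G$ and we pick the smaller.

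The main construction begins with a Steiner tree $T$ in $G$ for $U_0$. In a plane triangulation on $n$ vertices the $r$-ball around any vertex contains at least roughly $3r^2$ vertices by Euler's formula, so any two vertices lie at distance at most about $\sqrt{n/3}$; quantifying carefully yields a bound of $2\sqrt{3n}$ on each of the $\le |U_0|-1$ geodesics that make up $T$. Surrounding $T$ by a width-$3$ tube and adjoining the $d_U$-neighborhood of each vertex of $U_0$ (which absorbs all of $\Uo$) gives a set $D_0\supseteq U$ with
\[ |D_0| \le 3(|U_0|-1)\left(2\sqrt{3n}+2d_U+9\right)+\frac{3}{2}|\Uo|; \]
the constant $9$ comes from end-caps at the Steiner points, and the coefficient $\frac{3}{2}$ from pairing each vertex of $\Uo$ with a shared segment of the width-$3$ tube. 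The closed neighborhood of $D_0$ then separates $G$ into triangulated discs whose interior vertices all have degree $6$, so each disc interior is combinatorially a piece of $G_\infty$.

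To dominate each residual disc, I would align a translate of $D_\infty$ with its boundary walk and read off the cost using Lemma~\ref{L:1cycleABCDE} of~\cite{ArxivKingMJP}, which handles cylinders whose boundary outer degree sequences are of types A--E. A case analysis shows that the boundaries produced by our tube construction do fall into these types (possibly after a mild subdivision), giving a dominating set of each disc interior of size at most $\frac{1}{6}$ of the disc's vertex count plus $O(1)$; summing over discs yields the $\frac{n}{6}+\frac{1}{3}$ bulk contribution and completes the first displayed bound.

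For the \textbf{Moreover} clause, Euler's formula forces $|U_0|\le 12$ (the twelve units of angular defect each require an anomalous vertex), $\Uo=\emptyset$, and $d_U=0$, so the first bound specializes to $|D|\le\frac{n}{6}+O(\sqrt{n})$, which already yields $|D|\le\frac{n}{6}+1.05\times 10^7$ for all $n$ below an astronomical threshold; for small $n$ the trivial bound $|D|\le n$ fits comfortably beneath $\frac{n}{6}+1.05\times 10^7$. To close the remaining large-$n$ range I would use a separate localized construction that shifts $D_\infty$ to fit around each of the $\le 12$ defects, incurring only an $O(1)$ correction per defect and thereby achieving $|D|\le\frac{n}{7}+O(1)\le\frac{n}{6}+O(1)$. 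The main obstacle will be controlling the disc-domination errors tightly enough that the case analysis on types A--E closes without any residual $\sqrt{n}$ term in the max-degree-$6$ regime, and tracking the constants carefully enough to verify that $1.05\times 10^7$ absorbs every regime of $n$ uniformly.
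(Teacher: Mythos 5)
Your proposal hinges on the claim that every geodesic joining two vertices of $U_0$ has length at most about $2\sqrt{3n}$, i.e., that a plane triangulation has diameter $O(\sqrt{n})$. That is false. A $(3,\ell)$-cylinder with its two triangular boundary faces capped is a plane triangulation on $n=3(\ell+1)$ vertices with only six vertices of degree $\ne 6$, yet its diameter is about $\ell\approx n/3$. So the Steiner tree $T$ can be linear in $n$, and your candidate set $D_0$ (a width-3 tube around $T$) could already have size $\Omega(n)$, ruling out the bound you want. The area lower bound $|N_r[x]|\gtrsim 3r^2$ only holds while $N_r[x]$ is a triangulated hexagon, i.e., while it contains no vertices of degree $\ne 6$; once it meets $U$, growth can stall, and that is exactly the case your estimate ignores.

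The paper's proof does not assert a diameter bound; instead it is a genuine dichotomy. First it builds the slit graph $G'$ along $T$, pulls back $D_\infty$ via Lemma~\ref{C:map}, and obtains a dominating set of size at most $\frac{1}{6}(n+9|V(T)|-7)$ (or $\frac{1}{7}(n+8|V(T)|-2)$ when $\Delta\le 6$). If the longest internal Steiner-path $P_0$ is short (Case~1), this already gives the stated bound. If $P_0$ is long (Case~2), Lemma~\ref{L:P and N_i[x]} shows a midpoint $x$ of $P_0$ has a large degree-6 neighborhood, and Lemmas~\ref{L:1cycleABCDE}, \ref{L:W'}, and~\ref{just need it part 1} extract a long $(w,\ell)$-cylinder with no interior vertices in $U$; the paper then dominates the cylinder interior cheaply (Lemmas~\ref{just need it part 3} and~\ref{L:new cylinder}), deletes those vertices, identifies the two boundary $w$-cycles to form a smaller sphere triangulation $G^*$, and recurses. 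Your proposal contains no mechanism for the long-$P_0$ regime, and it also misattributes to Lemma~\ref{L:1cycleABCDE} a disc-domination role it does not play (it is a structural existence lemma used to locate the cylinder). Likewise, for the \emph{Moreover} part you concede an $O(\sqrt{n})$ error and gesture at a ``separate localized construction,'' which is exactly the missing cylinder-based alternative; without it, the specialization to $\Delta\le 6$ does not close. To repair the proof you need the two-candidate structure and, in Case~2, the cylinder extraction plus either the identification/induction step (for the general bound) or the estimate $|D|\le \frac{1}{7}(n+8|V(T)|-2)$ with the cylinder covering almost all of $G$ (for the $\Delta\le 6$ bound).
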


By the comment following Definition~\ref{D:U}, when $\Delta(G)\le6$
we may assume that $U=U_0=\{v\in V(G):\deg(v)\not=6\}$ and $d_U=0$;
hence Lemma~\ref{L:sphereplus} does indeed imply Theorem~\ref{T:n/6}.

\medskip

From Euler's formula it follows that
\begin{equation}\label{eq:1}
\sum_{u\in U}(\deg(u)-6)=-12.
\end{equation}
$G$ has minimum degree at least~3 since otherwise $G$ must be a triangle,
in which case any one vertex gives us a sufficiently small dominating set.
Then, by Equation~\ref{eq:1}, $|U|\ge 4$.
Note that if $\Delta(G)\le 6$ and $U=\{v\in V(G):\deg(v)\not=6\}$,
then by Equation~\ref{eq:1}, $|U|\le 12$.

Let $T_0$ be a {\em Steiner tree} for $U_0$ in $G$; that is, let $T_0$ be a tree in $G$
such that $U_0 \subseteq V(T_0)$ and $T_0$ is of minimum size.
Let $U_0' = U_0 \cup \{v\in V(T_0): \deg_{T_0}(v)\not=2\}$.

Suppose that $|U_0|>1$.  Let $L_0$ be the set of leaves in $T_0$; then
$|L_0|\ge 2$ and $L_0\subseteq U_0$.  One can prove by induction
that a tree with $k\ge 2$ leaves has at most $k-2$ vertices of degree
greater than $2$.  Then
we have $|U_0'|\le 2|L_0|-2 \le 2|U_0|-2$.

Since $U_0'$ contains $\{v\in V(T_0): \deg_{T_0}(v)\not=2\}$,
$E(T_0)$ can be partitioned by the maximal paths in $T_0$ with no internal vertices in $U_0'$
(and endpoints in $U_0'$).  There are $|U_0'|-1$ such paths;
let $P_0$ be such a path of maximum length.
Then $|P_0|(|U_0'|-1)\ge |E(T_0)|$.
Since $|V(T_0)| = |E(T_0)|+1$ and $|U_0'| \le 2|U_0|-2$, we get
\[
|V(T_0)| \le (2|U_0|-3)|P_0|+1.
\]
(If $\Delta(G)\le6$ and $U=U_0=\{v\in V(G):\deg(v)\not=6\}$, then $|U_0|\le 12$, so
$|V(T_0)| \le 21|P_0|+1$.)

If $|U_0|\le 1$, then by $|U|\ge 4$ and Definition~\ref{D:U}(3),
we must have $|U_0|=1$.
In this case, $|V(T_0)|=1$.

Since every component of $G[U]$ contains a vertex of $U_0$ and
$T_0$ contains $U_0$, $T_0\cup G[U]$ is connected.
Let $T$ be a spanning tree in $T_0\cup G[U]$.
So, $T$ contains $U$ and $|V(T)|\le |V(T_0)|+|\Uo|$.
(If $|U_0|\le 1$, $|V(T)|=1+|\Uo|$.)

Next, we define $G'$: make two copies of each edge of $T$ and, for each vertex $v\in V(T)$, make
$\deg_T(v)$ copies of $v$.  Draw these all near the original edges and vertices,
and create incidences in the natural way so that we obtain a plane
graph with one face $f_T$ that contains $T$ (before $T$ is deleted), and the other
faces are all $3$-faces (that correspond to the faces of $G$). (See Figure~\ref{F:construct G'} for an example.)
Note the boundary of $f_T$ is a cycle.
For convenience, let us reembed
$G'$ in the plane such that $f_T$ is the outer face.
Ignoring $f_T$, we have a triangulated disc.
Let $V_T'$ be the vertices in $G'$ copied from $V(T)$; then $G'-V_T'=G-V(T)$.

\begin{figure}[ht]
\begin{center}
\includegraphics{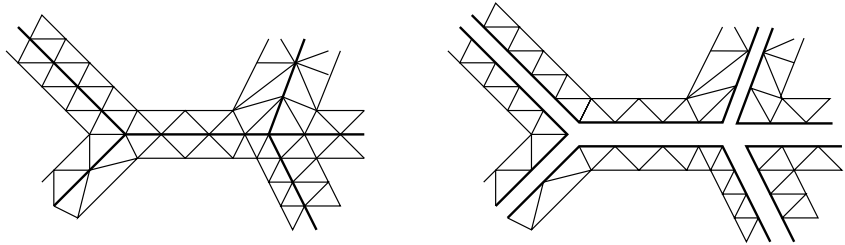}
\end{center}
\caption{An example of constructing $G'$ near a portion of $T$}
\label{F:construct G'}
\end{figure}

The following lemma was originally stated only for the case where $G'$ is constructed from a graph $G$ that has maximum degree at most~6,
but it applies (with the exact same proof) whenever $G'$ is a triangulated disk and all its interior vertices
have degree equal to~$6$.

\begin{lemma}\label{C:map}[King and Pelsmajer~\cite{ArxivKingMJP}]
Suppose that $G'$ is a triangulated disk and all its interior vertices have degree equal to~6.
Then $G'$ can be mapped to $G_\infty$ such that vertices are sent to
vertices, edges to edges, and interior $3$-faces to $3$-faces,
such that adjacent $3$-faces in $G'$ are mapped to distinct $3$-faces in $G_\infty$.
\end{lemma}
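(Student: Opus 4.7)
The plan is to build the map face by face as a local isomorphism (a ``developing map''), leveraging the fact that a triangulated disk is simply connected and that every interior vertex of $G'$ has degree exactly~$6$, matching the local structure at every vertex of $G_\infty$. First I would fix any interior $3$-face $f_0$ of $G'$, any $3$-face $\phi_0$ of $G_\infty$, and any bijection $\psi_0\colon V(f_0)\to V(\phi_0)$ respecting the cyclic order around the face; this determines how the initial face and its three vertices are mapped.

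Then I would extend inductively through the face-adjacency dual restricted to interior $3$-faces: if $f$ has already been mapped with a vertex bijection $\psi$ to a face $\phi$ of $G_\infty$, and $f'$ is an interior face sharing edge $uv$ with $f$, declare $f'$ to map to the unique face $\phi'$ of $G_\infty$ on the opposite side of $\psi(u)\psi(v)$ from $\phi$, extending $\psi$ by sending the third vertex of $f'$ to the third vertex of $\phi'$. Since every interior edge of $G'$ borders two interior $3$-faces and this dual is connected, the process eventually assigns an image to every interior $3$-face.

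The crucial step is well-definedness: if a face is reached by two different paths of face-adjacencies, both paths must produce the same image. Equivalently, the \emph{monodromy} around any closed walk in the dual must be trivial. Here I would use that $G'$ is a disk, so the face-adjacency dual on interior $3$-faces is a planar graph whose bounded faces correspond bijectively to the interior vertices of $G'$. By a standard topological argument, every closed walk in this dual decomposes as a product of boundary walks of bounded faces, so it suffices to check monodromy around a single interior vertex $v$. Since $v$ has degree exactly~$6$ and the image vertex $\psi(v)\in V(G_\infty)$ also has degree~$6$, the six faces surrounding $v$ in $G'$ are mapped cyclically and bijectively to the six faces around $\psi(v)$ in $G_\infty$, and the tour closes up. This is precisely where the degree-$6$ hypothesis on interior vertices is used; boundary vertices cause no trouble because their face-fans are not closed loops in the dual.

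Finally, adjacent interior $3$-faces automatically map to distinct $3$-faces in $G_\infty$: if $f,f'$ share edge $uv$, then their images share the edge $\psi(u)\psi(v)$ and lie on opposite sides of it, so they are distinct. The main obstacle is formalizing the monodromy reduction, that is, making precise the topological claim that every closed walk in the dual of a disk triangulation is a product of hexagonal loops around interior vertices; this is routine by induction on the number of bounded dual faces enclosed by the walk, peeling off one hexagon at a time.
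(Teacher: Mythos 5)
The paper itself does not prove Lemma~\ref{C:map}: it is imported from King and Pelsmajer~\cite{ArxivKingMJP} with the remark that the proof there applies unchanged once the hypothesis is weakened to ``interior vertices have degree~$6$,'' so there is no in-paper argument for you to match. Judged on its own terms, your developing-map argument is correct and is the canonical way to prove a statement of this type. The key points all check out: the dual restricted to interior $3$-faces is a connected plane graph (connectedness uses that a triangulated disk has a simple boundary cycle, hence $G'$ is $2$-connected---worth stating explicitly); its bounded faces correspond exactly to interior vertices of $G'$; at a degree-$6$ interior vertex the six-step fan of faces closes up in $G_\infty$ because $G_\infty$ is $6$-regular, so each such hexagonal loop has trivial holonomy; boundary vertices give only open fans in the dual and impose no constraint; and adjacent interior faces land on opposite sides of a shared edge $\psi(u)\psi(v)$ in $G_\infty$, hence on distinct faces. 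The one place where the write-up slightly undersells what is needed is the sentence ``every closed walk in this dual decomposes as a product of boundary walks of bounded faces.'' Because the holonomy takes values in a nonabelian group (you are transporting a face \emph{together with a vertex-labeling}), you need the $\pi_1$ statement---closed walks are products of conjugates of the hexagonal face boundaries, i.e.\ $\pi_1$ is normally generated by them---not merely the $H_1$ statement. Your proposed induction on the number of enclosed dual faces, peeling one hexagon at a time, is exactly the right way to establish this, so the argument goes through; just phrase the decomposition claim at the level of $\pi_1$ rather than cycle space.
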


Let $g$ be a map from $G'$ to $G_\infty$ as described in Lemma~\ref{C:map}.
Note that $g$ is not necessarily injective.
There is a pattern of vertices from $G_{\infty}$
that uses every seventh vertex (see the right side of Figure~\ref{F:infinite-grid}).
Let $D_\infty \subseteq V(G_{\infty})$ be the (infinite) set of vertices indicated in the figure.
For each vertex $v\in V(G')$, $g(v)\in D_\infty$ or $g(v)$ is adjacent to a vertex in $D_\infty$.  If $v\not\in V_T'$ then
the seven vertices of $N[v]$ map to the seven vertices of $N[g(v)]$; as this
includes one vertex of $D_\infty$, $v$ is dominated by $g^{-1}(D_\infty)=\{v\in V(G')\st g(v)\in D_\infty\}$.  Therefore $G'$ is dominated by the union
of $g^{-1}(D_\infty)$ and $V_T'$, or equivalently
the union of $g^{-1}(D_\infty)-V_T'$ and $V_T'$.
Let $D'= g^{-1}(D_\infty)-V'_T$ and
let $D$ be the union of $D'$ and $V(T)$.  By the construction
of $G'$ from $G$, $D$ is a subset of $V(G)$ that dominates every vertex of $G$.

In Subsection~\ref{SS:1}, we find upper bounds for $|D|$ in terms of $n$ and $|V(T)|$.
In~\cite{ArxivKingMJP}, where the maximum degree is at most~$6$, such a bound is much easier to find.

\subsection{Upper bounds for $|D|$}\label{SS:1}

Consider any distinct $v_1,v_2\in D'$ with $g(v_1)\not=g(v_2)$.
Since $D'\subseteq G'-V_T'$, $N[v_1]$ and $N[v_2]$ are $7$-vertex subsets of $G'$.
Then, by Lemma~\ref{C:map}, $g$ maps $N[v_1]$ and $N[v_2]$ bijectively to $N[g(v_1)]$ and $N[g(v_2)]$, respectively.
According to the right side of Figure~\ref{F:infinite-grid}, since $g(v_1)\not=g(v_2)$,
$N[g(v_1)]$ and $N[g(v_2)]$ are disjoint.
Then $N[v_1]$ and $N[v_2]$ must also be disjoint.
Therefore, if $v_1,v_2\in D'$ and $N[v_1]\cap N[v_2]\not=\emptyset$, then $g(v_1)=g(v_2)$.

\begin{lemma}\label{newlemma}
If $g(v_1)=g(v_2)$ in $G_\infty$ where $v_1,v_2\in V(G')$ are distinct vertices, and $u\in N[v_1]\cap N[v_2]$, then $\deg(u)>6$ and there are at least~5 edges between $uv_1$ and $uv_2$ in the rotation at $u$ in the drawing of $G'$.
\end{lemma}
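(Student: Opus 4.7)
The plan is to analyze the rotation at $u$ in $G'$ using Lemma~\ref{C:map}. Around $g(u)$ in $G_\infty$ there are exactly six 3-faces arranged cyclically, and consecutive 3-faces at $u$ in $G'$ must map to distinct 3-faces; this rigidity forces a period-6 structure.

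First I would observe that $u \neq v_1, v_2$: otherwise, say $u = v_1$; then $v_2 \in N[v_1] \setminus \{v_1\}$, so $v_1 v_2 \in E(G')$, and hence $g(v_1) g(v_2)$ would be an edge of $G_\infty$, contradicting $g(v_1) = g(v_2)$. So $u$ is a common neighbor of $v_1$ and $v_2$ in $G'$.

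Let $d = \deg_{G'}(u)$, and let $w_1, \ldots, w_d$ be the neighbors of $u$ listed in rotation order at $u$ in the drawing of $G'$; so $v_1 = w_i$ and $v_2 = w_j$ for some distinct indices $i, j$, which we take with $i < j$. For every relevant index $k$, the face $T_k = u w_k w_{k+1}$ is an interior 3-face of $G'$ (if $u$ lies on the boundary of $G'$, the one non-triangular wedge at $u$, bounded by the outer face $f_T$, is skipped; otherwise the rotation is cyclic). By Lemma~\ref{C:map}, $T_k$ maps to the 3-face $g(u) g(w_k) g(w_{k+1})$ of $G_\infty$, and since $T_k$ and $T_{k+1}$ share the edge $u w_{k+1}$, they map to distinct 3-faces, which forces $g(w_k) \neq g(w_{k+2})$. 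Likewise $g(w_k) \neq g(w_{k+1})$, since $g(T_k)$ is a nondegenerate triangle.

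Thus $g(w_1), g(w_2), \ldots, g(w_d)$ is a walk on the link of $g(u)$, which is a 6-cycle (as $g(u)$ has degree 6 in $G_\infty$), with consecutive entries adjacent and no immediate reversal. Such a walk must traverse the 6-cycle monotonically in a single direction, so two of its entries coincide only when their indices differ by a multiple of six. Applied to $g(v_1) = g(v_2)$, this gives $j - i \geq 6$; hence $d \geq j \geq 7$, i.e., $\deg(u) > 6$, and the five edges $u w_{i+1}, \ldots, u w_{i+5}$ lie between $u v_1$ and $u v_2$ in the rotation at $u$.

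The main step is the monotonicity argument for the walk on $C_6$: the distinct-3-face conclusion of Lemma~\ref{C:map} rules out backtracking, which forces the walk to proceed in a single direction and revisit its starting vertex only after six steps. Everything else is a routine consequence of that lemma and the fact that $G_\infty$ is 6-regular.
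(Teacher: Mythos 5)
Your proposal is correct and follows essentially the same route as the paper's proof: first rule out $u\in\{v_1,v_2\}$ via the adjacency observation, then use Lemma~\ref{C:map} to show that as one scans the faces (equivalently, neighbors) around $u$ in rotation order, their images wind monotonically around $g(u)$, so the two occurrences of $g(v_1)=g(v_2)$ must be separated by a multiple of six steps. The paper phrases this in the dual (faces incident to $u$ map ``consecutively'' around $g(u)$) while you phrase it via the walk on the link of $g(u)$ and make the no-backtracking/monotonicity step explicit; these are the same argument, and your version supplies a bit more detail at the point the paper leaves implicit.
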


\begin{proof}
If $v_1,v_2$ are adjacent in $G'$, then they are mapped to adjacent
(hence distinct) vertices by Lemma~\ref{C:map}.
So $v_1$ and $v_2$ are not adjacent, and $u\not\in \{v_1,v_2\}$.
The $3$-faces of $G'$ that are incident to $u$ will either form
a path or a $6$-cycle in the dual of $G'$.  According to the map $g$, the
images of the faces under $g$ will again be consecutive around $g(u)$,
so the number of these faces between $v_1$ and $v_2$ must be $6i$ for some
positive integer $i$.  The number of edges between $uv_1$ and $uv_2$ in the
rotation at $u$ is $6i-1\ge 5$, and thus $\deg(u)\ge (6i-1)+2\ge7$.
(See Figure~\ref{overlap}.)
\qed
\end{proof}

\begin{figure}[ht]
\begin{center}
\begin{pspicture}(4,2)
  \psdots(1,1)(3,1)
  \rput(2,-0.2){$u$}
  \psline(2,0)(1,1)
  \psline(2,0)(3,1)
  \psline(2,0)(2,1)
  \psline(2,0)(1.7,1)
  \psline(2,0)(1.3,1)
  \psline(2,0)(2.3,1)
  \psline(2,0)(2.7,1)
  \psline(2,0)(3,0.7)
  \psline(2,0)(1,0.7)
  \psline(1,1)(.7,1.3)
  \psline(3,1)(3.3,1.3)
  \psline(2.7,1)(3.3,1)
  \psline(3.3,1)(3,0.7)
  \psline(3,0.7)(3,1.3)
  \psline(2.7,1)(3,1.3)
  \psline(3.3,1.3)(3.3,1)
  \psline(3.3,1.3)(3,1.3)
  \psline(1,.7)(1,1.3)
  \psline(1.3,1)(.7,1)
  \psline(.7,1.3)(1,1.3)
  \psline(.7,1.3)(.7,1)
  \psline(.7,1)(1,.7)
  \psline(1.3,1)(1,1.3)
\end{pspicture}
\end{center}
\caption{Two vertices $v_1,v_2$ with $g(v_1)=g(v_2)$ and $u\in N[v_1]\cap N[v_2]$}
\label{overlap}
\end{figure}
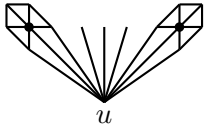

Let $t'$ be the number of vertices in $G'$ of degree greater than $6$, and for each vertex $v\in D'$
let $t'_v$ be the number of vertices in $N[v]$ with degree greater than $6$.

Consider any $v\in D'$, and suppose that $u\in N[v]\cap N[v']$ for some $v'\in D'$ with $v\not=v'$.
By the paragraph preceding Lemma~\ref{newlemma}, $g(v)=g(v')$.
Then, by Lemma~\ref{newlemma}, the degree of $u$ is greater than~$6$.
Therefore, vertices of degree at most~$6$ in $N[v]$ are not in
$N[v']$ for any $v'\in D'$ with $v\not=v'$.
Since $v\in D'\subseteq G'-V_T'$, $N[v]$ is a $7$-vertex subset of $G'$.
Hence, $N[v]$ contains $7-t_v'$ vertices of degree at most~$6$,
which are not in $N[v']$ for any $v'\in D'$ with $v\not=v'$.
Therefore,
$\sum_{v\in D'}(7-t_v')\le |V(G')|-t'$.
We can write \[|D'|= \frac{1}{6}\sum_{x\in D'}6 \le \frac{1}{6}\left({|V(G')|-t'+\sum_{v\in D'}(t_v'-1)}\right)\]
and soon, we will bound $\sum_{v\in D'}(t_v'-1)$.

If $\Delta(G)\le6$, then any vertex with degree greater than~6 in $G'$
must be in $V_T'$ and must, as a vertex in $G$, be a leaf of $T$.
If we also have $U=U_0=\{v\in V(G):\deg(v)\not=6\}$, then $T$ is a Steiner tree
for $U$, so every leaf of
$T$ has degree less than~6.  Therefore, if $\Delta(G)\le6$ and $U=U_0=\{v\in V(G):\deg(v)\not=6\}$,
then $t'$ and every $t_v'$ is zero, and the results of the previous paragraph
simplify to
$7|D'|\le |V(G')|$, or $|D'|\le \frac{1}{7}|V(G')|$.

To bound $\sum_{v\in D'}(t_v'-1)$, we make an auxilliary plane graph $H$:
Let $V(H)$ be the set of all vertices in $G'$ that have degree greater than $6$.
For each $v\in D'$, let $B[v]$ be the union of the six triangles
incident to $v$ in the embedding of $G'$ in the plane (including the interior and boundary of each triangle).
Then each $B[v]$ is a hexagon with $t_v'$ vertices of $H$ on its boundary.
For distinct $x,y\in D'$, the interiors of the hexagons $B[x]$ and $B[y]$
are disjoint; the hexagons may intersect on their boundaries.
Now, for each $v\in D'$ with $t'_v\ge2$, draw a $(t'_v-1)$-leaf star in $B[v]$ on the $t_v'$ vertices of $V(H)$ in $B[v]$,
such that the edges are drawn on the interior of $B[v]$.  (See Figure~\ref{F:H} for an example.)
Let $E(H)$ be the set of all such edges.
Then $H$ is a plane graph with $t'$ vertices and $\sum_{v\in D'}(t_v'-1)$ edges.

\begin{figure}[ht]
\begin{center}
\begin{pspicture}(2,1.6)
  \psset{unit=.4cm}
  \pspolygon[linestyle=dotted](3,0)(1,0)(0,1.732)(1,3.464)(3,3.464)(4,1.732)
  \psdots(1,0)(1,3.464)(3,3.464)(4,1.732)
  \rput(0.6,-0.4){$a$}
  \rput(4.4,1.732){$b$}
  \rput(0.6,3.9){$c$}
  \rput(3.4,4.05){$d$}
  \psline(1,0)(4,1.732)
  \psline(1,0)(1,3.464)
  \psline(1,0)(3,3.464)
\end{pspicture}
\end{center}
\caption{An example of a star drawn in $B[v]$ when $t_v'=4$}
\label{F:H}
\end{figure}
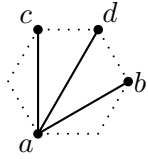

\begin{lemma} \label{L:outerplanar}
$H$ is an outerplanar graph with no multiple edges.
\end{lemma}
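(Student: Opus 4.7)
The plan is to verify in turn that $H$ is a plane graph, has no multiple edges, and is outerplanar. First, by construction the closed regions $B[v]$ and $B[v']$ have disjoint interiors for distinct $v, v' \in D'$, and each star is drawn strictly inside its $B[v]$. Hence no two chords from different stars cross, and $H$ is plane in the induced drawing.

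For the absence of multiple edges, suppose the edge $uw$ appears as a star edge of both $B[v_1]$ and $B[v_2]$ with $v_1 \ne v_2 \in D'$. Then both $u$ and $w$ lie in $B[v_1] \cap B[v_2]$, and by disjointness of interiors both must lie on the two boundary hexagons, so $u, w \in N(v_1) \cap N(v_2)$. By the paragraph just preceding Lemma~\ref{newlemma} this forces $g(v_1) = g(v_2)$; Lemma~\ref{newlemma} applied at $u$ (and again at $w$) then demands that each of $u, w$ has degree at least $7$, with $v_1, v_2$ separated by at least five edges in the local rotation. I plan to exploit the freedom in choosing star-centers: process the $v \in D'$ in some fixed order, always picking a center so that no incident edge has already been used by an earlier star. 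The rotation constraints imposed by Lemma~\ref{newlemma} should guarantee that a valid non-conflicting choice always remains available.

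For outerplanarity, I argue that every $x \in V(H)$ lies on the boundary of the unbounded face. An isolated $x$ is trivially on the outer face. Otherwise $x$ sits on the boundary hexagon of some $B[v]$; since the star in $B[v]$ consists of chords emanating from a single center inside the hexagon, the arcs of the hexagon adjacent to $x$ carry no $H$-edge. Following such an arc out of $B[v]$, I reach the complement $\mathbb{R}^2 \setminus \bigcup_{v' \in D'} \mathrm{int}(B[v'])$. Since $G'$ is a triangulated disk and the regions $B[v']$ are interior hexagons, this complement is path-connected to infinity through the unbounded region outside $G'$, giving a curve from $x$ to infinity that avoids every edge of $H$. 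Hence $x$ sits on the outer face.

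The main obstacle is the center-selection argument that rules out multiple edges. Pinning down the exact (inductive/greedy) scheme, so that the rotation constraints from Lemma~\ref{newlemma} always permit a conflict-free choice of center, is the delicate technical step; the plane-graph and outerplanarity portions are geometrically straightforward once the hexagonal regions are in place.
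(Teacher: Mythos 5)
Your proposal correctly sets up the hexagonal regions, invokes the paragraph before Lemma~\ref{newlemma} and Lemma~\ref{newlemma} itself, and your outerplanarity argument is basically sound, though less direct than the paper's (the paper simply observes that every vertex of $V(H)$ has degree greater than~$6$ in $G'$, hence lies on the boundary cycle of the triangulated disk $G'$, hence on the outer face of $H$). However, the no-multiple-edges part has a genuine gap, and it is exactly the step you flagged as ``delicate.''

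The greedy center-selection idea cannot work in general. When $t'_v = 2$, the $(t'_v-1)$-leaf star in $B[v]$ is a single edge, and there is no freedom at all: the edge is forced to be the unique edge joining the two vertices of $V(H)$ on the boundary of $B[v]$. If two hexagons $B[v_1]$ and $B[v_2]$ each contain exactly $\{x,y\} \subseteq V(H)$, both stars are forced to be $xy$, and no choice of center can prevent the double edge. So you would still have to prove that no two hexagons share the same pair of $H$-vertices, which is essentially the entire content of the lemma. In other words, the construction cannot be steered around the problem; you must prove the configuration impossible. The paper does this with a degree-sum argument: in the plane subgraph $Q$ of $G'$ consisting of the quadrilateral $xuyv$ and everything inside it, Lemma~\ref{newlemma} gives $\deg_Q(x) \ge 7$ and $\deg_Q(y) \ge 7$, the interior vertices all have degree~$6$, and $\deg_Q(u),\deg_Q(v) \ge 2$, which forces $|E(Q)| \ge 3|V(Q)| - 3$, contradicting Euler's bound $|E(Q)| \le 3|V(Q)| - 6$ for planar graphs. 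You should replace the greedy scheme with a contradiction argument of this type.
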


\begin{proof}
Assume there are two edges in $H$ sharing the same endpoints, say $x,y$, which means there are vertices $u,v\in D'$
such that $B[u]$ and $B[v]$ each contain both $x$ and $y$.  Let $Q$ be $G'$ restricted to the quadrilateral $xuyv$
and its interior, as shown in Fig~\ref{multiple edge}.  By Lemma~\ref{newlemma}, there are at least $5$ edges
incident to $x$ between the two edges $ux$ and $vx$ in the quadrilateral, so $\deg_Q(x)\ge7$; similarly $\deg_Q(y)\ge 7$.
All the vertices in the interior of $G'$ have degree $6$, and $\deg_Q(u)\ge 2,\deg_Q(v)\ge 2$,
so we have $\sum_{z\in Q}\deg(z)\ge 6(|V(Q)|-4)+2+2+7+7=6|V(Q)|-6$.  Also, $\sum_{z\in Q}\deg(z)=2|E(Q)|$, so
$|E(Q)|\ge 3|V(Q)|-3$.  However, $Q$ is planar, so Euler's formula yields $|E(Q)|\le 3|V(Q)|-6$.
This is a contradiction, so $H$ has no multiple edges.

The vertices of degree greater than~$6$ in $G'$ are all on the boundary of $G'$, so
all these vertices are incident to the outer face of $H$ as well.  These are the vertices of $H$, so $H$ is outerplanar.
\qed
\end{proof}

\begin{figure}[ht]
\begin{center}
\begin{pspicture}(5,4)
  \psset{unit=.7cm}
  \psdots(0,2)(4,2)(2,0)(2,4)
  \pspolygon(0,2)(2,0)(4,2)(2,4)
  \rput(2,4.4){$x$}
  \rput(2,-0.4){$y$}
  \rput(-0.4,2){$u$}
  \rput(4.4,2){$v$}
  \psline(2,0)(2,1)
  \psline(2,0)(1.8,1)
  \psline(2,0)(1.6,1)
  \psline(2,0)(2.2,1)
  \psline(2,0)(2.4,1)
  \psline(2,4)(2,3)
  \psline(2,4)(1.8,3)
  \psline(2,4)(1.6,3)
  \psline(2,4)(2.2,3)
  \psline(2,4)(2.4,3)
\end{pspicture}
\end{center}
\caption{The quadrilateral $xuyv$ and its interior in $H$}
\label{multiple edge}
\end{figure}
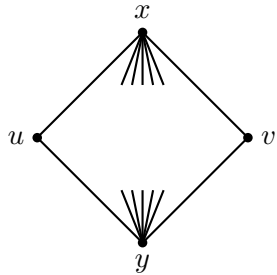

By Lemma~\ref{L:outerplanar}, $|E(H)|\le 2|V(H)|-3$, so $\sum_{v\in D'}(t_v'-1)\le 2t'-3$.
Then $|D'|\le \frac{1}{6}(|V(G')|-t'+2t'-3)=\frac{1}{6}(|V(G')|+t'-3)$.
Since $|V(G')|=n-|V(T)|+|V_T'|$ and $|D|=|D'|+|V(T)|$, we obtain
$|D|\le \frac{1}{6}(n+5|V(T)|+|V_T'|+t'-3)$.
Clearly, $t'\le |V_T'|$, so $|D|\le \frac{1}{6}(n+5|V(T)|+2|V_T'|-3)$.
When we defined $G'$, we made $\deg_T(v)$ copies of $v$ for all $v\in V(T)$, so $|V_T'|=\sum_{v\in T}\deg_T(v)=2|E(T)|=2|V(T)|-2$.
Therefore, \[|D|\le \frac{1}{6}(n+9|V(T)|-7).\]

Recall that if $\Delta(G)\le 6$ and $U=U_0=\{v\in V(G):\deg(v)\not=6\}$, then $|D'|\le \frac{1}{7}|V(G')|$.
Since $|V(G')|=n-|V(T)|+|V_T'|$ and $|V_T'|=2|V(T)|-2$, in this case we have
$|D|=|V(T)|+|D'|\le |V(T)|+\frac{1}{7}(n-|V(T)|+2|V(T)|-2)$, or
\[ |D| \le \frac{1}{7}(n+8|V(T)|-2).\]

Thus we have found upper bounds for $|D|$ in terms of $n$ and $|V(T)|$.
When this bound does not suffice, we will need to find
a different dominating set.

\subsection{When $G$ contains a large triangulated cylinder}\label{SS:2}

Within this subsection, we assume that $|U_0|>1$.

Recall that $P_0$ is a longest path in $T_0$ such that no internal vertices are in $U_0'$.
Let $v_1,v_2$ be the endpoints of the path $P_0$.
Let $x$ be a middle vertex of $P_0$, that is, let $x$ be a vertex on $P_0$ of distance $\lfloor{|P_0|/2}\rfloor$ from an
endpoint of $P_0$.
Let $N_i(x)$ be the set of vertices of $G$ with distance exactly $i$ from $x$,
let $N_i[x]$ be the set of vertices of $G$ with distance at most $i$ from $x$, and
let $G_i$ be the graph induced by $N_i[x]$.

The $d_U=0$ case of the following result was obtained in~\cite{ArxivKingMJP},
using mostly the same proof.

\begin{lemma}\label{L:P and N_i[x]}
$N_i(x)$ does not intersect $U$
for $i<\lfloor{|P_0|/2}\rfloor-d_U$.
\end{lemma}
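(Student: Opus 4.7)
The plan is to argue by contradiction using the minimality of the Steiner tree $T_0$. I will first establish the stronger statement that $N_i(x)\cap U_0=\emptyset$ for every $i<\lfloor|P_0|/2\rfloor$, and then transfer this to $U=U_0\cup\Uo$ using property~(4) of Definition~\ref{D:U}.

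I may assume $|P_0|\ge 2$, otherwise the conclusion is vacuous. Since $x$ is a middle vertex of $P_0$, it is an \emph{internal} vertex of $P_0$, so $x\notin U_0'$; consequently $\deg_{T_0}(x)=2$ and both edges of $T_0$ at $x$ lie on $P_0$. Deleting $E(P_0)$ from $T_0$ then breaks $T_0$ into two subtrees $T_0^{v_1}, T_0^{v_2}$ (containing $v_1, v_2$ respectively) together with the $|P_0|-1$ internal vertices of $P_0$, now isolated. Because internal vertices of $P_0$ lie outside $U_0'\supseteq U_0$, this forces $U_0\subseteq V(T_0^{v_1})\cup V(T_0^{v_2})$.

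For the main step, suppose toward a contradiction that $u\in U_0\cap N_i(x)$ with $i<\lfloor|P_0|/2\rfloor$; without loss of generality $u\in V(T_0^{v_1})$. Let $Q$ be a shortest $x$--$u$ path in $G$, so $|Q|=i$, and consider
\[
T'\ :=\ T_0^{v_1}\cup T_0^{v_2}\cup P_0[x,v_2]\cup Q,
\]
where $P_0[x,v_2]$ is the half of $P_0$ from $x$ to $v_2$. Then $T'$ is a connected subgraph of $G$ containing $U_0$, so any spanning tree of $T'$ is a Steiner tree for $U_0$. A routine overlap count (the only forced pairwise intersections are $\{v_2\}=V(P_0[x,v_2])\cap V(T_0^{v_2})$, $\{u\}=V(Q)\cap V(T_0^{v_1})$, and $\{x\}=V(P_0[x,v_2])\cap V(Q)$) yields $|V(T')|\le |V(T_0)|-\lfloor|P_0|/2\rfloor+i<|V(T_0)|$, contradicting the minimality of $T_0$. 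The symmetric sub-case $u\in V(T_0^{v_2})$ uses $P_0[x,v_1]$ in place of $P_0[x,v_2]$, and saves at least $\lceil|P_0|/2\rceil-i>0$ vertices, so the same contradiction arises.

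Finally, if $u\in\Uo\cap N_i(x)$ for some $i<\lfloor|P_0|/2\rfloor-d_U$, then by Definition~\ref{D:U}(4) there is $u'\in U_0$ with $d_G(u,u')\le d_U$, so $d_G(x,u')\le i+d_U<\lfloor|P_0|/2\rfloor$, contradicting the $U_0$ case already established. The only real obstacle is the vertex-overlap bookkeeping in the tree-replacement step: one must confirm that the three singleton intersections listed above exhaust all overlaps among $T_0^{v_1},T_0^{v_2},P_0[x,v_2],Q$, which follows from the decomposition of $T_0\setminus E(P_0)$ and the fact that $Q$ is a shortest $x$--$u$ path ending in $V(T_0^{v_1})$.
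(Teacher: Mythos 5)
Your proof is correct and takes essentially the same route as the paper: you replace the half of $P_0$ from $x$ to one endpoint by the shortest $x,u$-path $Q$, invoke the minimality of the Steiner tree $T_0$ to force $i\ge\lfloor|P_0|/2\rfloor$, and then transfer from $U_0$ to $U$ via Definition~\ref{D:U}(4). The paper phrases the tree surgery as "delete the interior of the $x,v_1$-subpath and add $Q$" rather than your explicit four-set union with an overlap count, but the argument and the resulting inequality are the same.
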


\begin{proof}
Suppose that $j$ is the smallest index such that $N_j(x)$ intersects $U_0$.
Since each $u\in \Uo$ has distance at most $d_U$ to a vertex of $U_0$,
vertices of $N_i(x)$ with $i<j-d_U$ do not intersect $U$.

Let $u\in U_0\cap N_j(x)$ and
let $Q$ be an $x,u$-path of
length $j$ (which is in $G_j$\,).
There is a unique $x,u$-path in $T_0$;
without loss of generality, assume
that it contains $v_1$ (rather than $v_2$).
By the choice of $P_0$, deleting the interior of its $x,v_1$-subpath from
$T_0$ gives a 2-component graph that contains $U_0$.  We could then add $Q$
to that graph to obtain a connected graph that contains $U_0$, and let
$T_0'$ be a spanning tree of it.  Then,
$|V(T_0')|\le |V(T_0)|-(\lfloor{|P_0|/2}\rfloor-1)+(j-1)$. But $|V(T_0)|\le |V(T_0')|$ since
$T_0$ is a Steiner tree for $U_0$, so $j\ge \lfloor{|P_0|/2}\rfloor$.
\qed
\end{proof}

Let $r$ be minimum such that $G_r$ is not a triangulated hexagon.
Then $G_{r-1}$ accounts for $1+\sum_{i=1}^{r-1} 6i$ distinct vertices, so $n> 1+6r(r-1)/2>3(r-1)^2$.

\medskip

Next, we seek to understand the structure of $G_i$ for values $i\ge r$.  Our immediate goal is Lemma~\ref{just need it part 1};
Lemmas~\ref{L:1cycleABCDE} and~\ref{L:W'} help us obtain it.  In~\cite{ArxivKingMJP} the result stated in Lemma~\ref{just need it part 1}
was obtained for graphs of maximum degree at most~$6$, using a different proof.

From Subsection~\ref{S:def}, recall the definitions {\em outer degree sequence} of a walk that bounds a connected outerplane subgraph of $G$
and {\em type A, B, C, D, and E cyclic sequences}.

\begin{lemma}[King and Pelsmajer~\cite{ArxivKingMJP}]\label{L:1cycleABCDE}
Suppose that $x$ is a vertex in a plane triangulation $G$, and $r$ is the minimum such that $G_r$
(the graph induced by $N_r[x]$) is not a triangulated hexagon.
If every vertex in $N_r[x]$ has degree~6, then $N_r[x]$ contains a cycle $C'$ of length at most $2r+1$
of type A, B, C, D, or E.  Moreover, if $C$ is type A or E then $|C'|\in \{2r,2r+1\}$.
\end{lemma}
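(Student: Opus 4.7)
The plan is to exploit the fact that $G_{r-1}$ is a triangulated hexagon whose every vertex has degree~$6$ in $G$, so it is combinatorially isomorphic to the closed ball $B_{r-1}$ of radius $r-1$ around a vertex in the infinite triangular grid $G_\infty$. In particular $C_{r-1}:=\partial G_{r-1}$ is a cycle of length $6(r-1)$ with six distinguished ``corner'' vertices and the remaining ``side'' vertices. The degree-$6$ hypothesis forces each side vertex of $C_{r-1}$ to have $2$ outward edges into $N_r(x)$ and each corner vertex to have $3$; tracing $C_{r-1}$ counterclockwise and reading off those outward neighbors produces a canonical closed walk $W$ in $N_r(x)$ of length $6r$. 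Were $W$ a simple cycle with no chord in $N_r[x]$, then $G_r$ would itself be a triangulated hexagon, contradicting the choice of $r$. Hence either (i)~$W$ revisits some vertex, or (ii)~$W$ is simple but some edge of $N_r[x]$ joins two non-consecutive vertices of $W$.

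From this failure I would extract a short cycle. In case~(i), choose indices $a<b$ with $w_a=w_b$ and $b-a$ minimum; in case~(ii), pick a shortest chord $w_aw_b$. Write $u=w_a$, $v=w_b$ and work inside the model $B_{r-1}$: pick geodesic paths $P_u,P_v$ of length $r$ from $x$ to $u,v$, chosen to ``fan out'' in opposite directions along $C_{r-1}$. Minimality of the witness and the rigid hexagonal structure of $G_{r-1}$ let one arrange that $P_u$ and $P_v$ meet only at $x$; closing $P_u\cup P_v$ via the identification $u=v$ or via the chord $uv$ then yields a simple cycle $C'\subseteq N_r[x]$ through $x$ of length $2r$ or $2r+1$. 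When the witness sits near a corner of $C_{r-1}$, a strictly shorter ``corner-cutting'' cycle not passing through $x$ becomes available instead, accounting for cycles of length less than $2r$.

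With $C'$ fixed, I would orient it so that the half of $G$ not containing $x$ lies to the right and compute $\rdeg$ at each vertex. A degree-$6$ vertex on a straight segment of a geodesic has neighbors split as two-on-$C'$, two on each side, so $\rdeg=2$ there. Deviations from $2$ can arise only at (a)~the apex $x$, (b)~the closing vertices $u,v$, and (c)~any turning vertex on a geodesic. Choosing the geodesics as straight as possible and exploiting the picture forced by minimality, I expect at most two entries of the outer degree sequence to differ from $2$. Symmetric identification configurations with straight geodesics should realize type~A with $|C'|=2r$; the analogous chord configuration and mildly asymmetric identifications should realize type~E with $|C'|=2r+1$; and the corner-cutting witnesses, concentrated near a single corner of $C_{r-1}$, should realize types~B, C, D depending on the precise local pattern at that corner.

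The main obstacle will be the final case analysis. One must verify that no exotic outer-degree pattern lying outside A--E arises, and that the two geodesics can always be chosen internally disjoint. I expect minimality of the witness to be the key lever for both: it limits how close $P_u$ and $P_v$ can come away from $x$, and it forbids extraneous turns on the interior of the radial paths. The length dichotomy $|C'|\in\{2r,2r+1\}$ for types~A and~E should then follow from the observation that these two types correspond to symmetric configurations in which $C'$ must pass through $x$ and traverse both radial paths in full, whereas types~B, C, D correspond to witnesses clustered at a single corner and so admit a strictly shorter cycle not needing to reach the center.
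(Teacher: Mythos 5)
The paper does not prove Lemma~\ref{L:1cycleABCDE} at all: it invokes it as a result of King and Pelsmajer~\cite{ArxivKingMJP}, followed only by a note translating notation (types~C and~D correspond to $q=1$ in that source, type~B to $q=2$, and types~A and~E to $q=3$). So there is no proof in the present paper to compare against; you are proposing an independent proof of a lemma the authors chose to cite rather than reprove.

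Taken on its own terms, your opening move is sound: $G_{r-1}$ is combinatorially the hexagonal ball of radius $r-1$ in $G_\infty$, the cyclic list $W$ of outward neighbours along $C_{r-1}$ is a closed walk of length $6r$, and if $W$ were a simple chordless cycle then $G_r$ would also be a triangulated hexagon, contradicting the minimality of $r$. The remainder, however, leaves the load-bearing claims unargued, and you say so yourself. The claim that $P_u$ and $P_v$ can always be chosen internally disjoint is not automatic: when the witness lies near a single corner of $C_{r-1}$, the two entry points $v_i,v_j$ on $C_{r-1}$ can be close, and two $x$-geodesics to nearby boundary vertices of a hexagonal ball generically share an initial segment. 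You hand this off to the ``corner-cutting cycle'' escape hatch, but that case is exactly where types~B, C, D arise and where one must say concretely which cycle is being built and verify its outer degree sequence; nothing in the sketch pins that down, nor rules out the walk $W$ revisiting a vertex more than twice or a chord and a revisit interacting. The final classification (``no exotic outer-degree pattern outside A--E arises'') is the entire content of the lemma and is deferred wholesale. As written this is a reasonable plan of attack, not a proof, and since the authors did not reprove the lemma you would need to carry out the full case analysis yourself or consult~\cite{ArxivKingMJP}, where the cases are organized by the parameter $q$.
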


In \cite{ArxivKingMJP}, walks of type C and D appear when ``$q=1$'', type B appear when ``$q=2$'' and type A and E arise when ``$q=3$''.

\begin{lemma}\label{L:W'}
Suppose that $W=v_0,v_1,\ldots,v_m=v_0$ is a walk (indexed by the cyclic group $Z_m$)
that bounds an outerplane subgraph $H$ of a plane triangulation $G$ such that $W$ is oriented
counterclockwise, that is, with the exterior of $H$ always to its right.
Suppose that $W$ is type A, B, C, D, or E and suppose that every vertex of $W$ has degree~$6$ in $G$.

Then $W$ is a cycle of $G$.
Furthermore, the neighbors of $W$ on the interior of $H$ form a connected outerplane graph $H'$,
bounded by a counterclockwise walk $W'$ with length and type specified by Table~\ref{t:type}.
\end{lemma}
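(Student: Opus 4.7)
The plan has two stages: first establish that $W$ is a cycle, then describe $H'$ and $W'$ through a fan-of-triangles analysis at each $v_i$.

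For the first stage I would argue by contradiction. Suppose a vertex $v$ appears $k \geq 2$ times in $W$. If the walk backtracks at $v$ at some appearance $i$ (i.e., $v_{i-1} = v_{i+1}$), then the outer-degree sector at $v_i$ consists of all edges at $v$ except the single backtracked walk-edge, forcing $\rdeg_i(W) = \deg(v) - 1 = 5$, contradicting the Type A--E bound $\rdeg \leq 4$. In the absence of backtracks, each appearance of $v$ contributes two distinct walk-edge incidences, and the sum of outer degrees across $v$'s appearances is at least $2k - 1$ (equality possible only in Type E when one appearance has $\rdeg = 1$). Summing walk-edge incidences and outer-degree edges, and, if necessary, carrying out the same count at the far endpoint of any cut-edge incident to $v$, I can force $\deg(v) > 6$ (or the analogous violation at the other endpoint), contradicting $\deg \equiv 6$ on $W$. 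Hence $W$ is a cycle.

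For the second stage, with $W$ a cycle and every $v_i$ of degree $6$, the number of edges from $v_i$ into the interior of the disk bounded by $W$ is $d_i^* := 4 - \rdeg_i(W)$. Because that disk is triangulated, these edges form a fan at $v_i$ between $v_i v_{i-1}$ and $v_i v_{i+1}$ in cyclic order, with consecutive fan edges cobounding an interior triangle at $v_i$. Labelling the endpoints $a_i^{(1)}, \ldots, a_i^{(d_i^*)}$ from the $v_{i-1}$-side to the $v_{i+1}$-side, I identify the triangle on the interior side of $v_i v_{i+1}$ from both of its endpoints, giving the gluing identity $a_i^{(d_i^*)} = a_{i+1}^{(1)}$. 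When $d_i^* = 0$ (only in Type C) this triangle instead has third vertex $v_{i+1}$, producing an interior chord $v_{i-1} v_{i+1}$ in $H$.

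The walk $W'$ is then assembled by concatenating the fans around $W$, using the gluing identity to stitch consecutive fans together into a single closed counterclockwise walk bounding $H'$. At each resulting vertex $u$ the outer degree $\rdeg_u(W')$ equals the number of its $W$-neighbors: $2$ at a generic corner $u = a_i^{(d_i^*)} = a_{i+1}^{(1)}$ (adjacent to $v_i, v_{i+1}$), $3$ at a shared vertex when $d_i^* = 1$ (adjacent to $v_{i-1}, v_i, v_{i+1}$), and $1$ at a strictly interior fan vertex $u = a_i^{(j)}$ with $1 < j < d_i^*$ (adjacent only to $v_i$). Running through each of Types A--E and tallying the length of $W'$ together with its outer-degree pattern then matches the corresponding entry of Table~\ref{t:type}.

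The main obstacle is the bookkeeping for the degenerate cases. In Type C the interior chord $v_{i-1} v_{i+1}$ effectively replaces $v_i$ in the boundary and merges two adjacent corners into a single vertex of $W'$, shortening $W'$ by two positions; one must verify that $H'$ remains connected and outerplane at this position and that the outer degree at the merged vertex is exactly what the table prescribes. In Type E the simultaneous presence of a $d^* = 1$ vertex and a $d^* = 3$ vertex inserts both a ``pinch'' (an outer-degree-$3$ point) and an ``expansion'' (an outer-degree-$1$ point) into $W'$, and checking that their combined effect on the length and outer-degree sequence of $W'$ produces exactly the type listed in Table~\ref{t:type} constitutes the bulk of the remaining case-work.
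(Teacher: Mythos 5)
Your overall plan---first show $W$ is a cycle, then build $W'$ by stitching together the fan of interior edges at each $v_i$---is the same as the paper's, but two substantive pieces are missing.

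\textbf{Stage 1 (cycle).} The claim that you can always ``force $\deg(v) > 6$'' at a repeated vertex $v$ does not hold directly. If $v$ is a cut-vertex incident to exactly two cut-edge blocks, appearing twice in $W$ with $\rdeg = 2$ at both appearances, then $\deg(v) = \deg_H(v) + \sum \rdeg = 2 + 4 = 6$, which is perfectly consistent with the hypotheses. Repeated vertices of degree~$6$ are thus not locally impossible; the contradiction only comes from a global argument about the block-cutpoint tree of $H$. The paper first rules out isolated vertices and leaves of $H$ (which would give $\rdeg = 6$ or $\rdeg = 5$), then shows that a $2$-connected leaf-block at a cut-vertex $v$ forces the pair $\{\rdeg_i,\rdeg_j\} = \{1,2\}$ at $v$'s two appearances entering/exiting the block, which occurs only in Type E and at most once; since $H$ has no leaves and at most one $2$-connected leaf-block, the block-cutpoint tree has at most one leaf and hence must be trivial, so $H$ is $2$-connected. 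Your phrase ``carrying out the same count at the far endpoint of any cut-edge'' points at the right idea, but the chain of cut-edges could be long and you would essentially be reconstructing this block-cutpoint tree induction; as written, the step is a genuine gap.

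\textbf{Stage 2 (no chords).} Your fan construction tacitly assumes that every interior edge incident to $v_i$ goes to a vertex not on $W$. But if $W$ has an interior chord $v_i v_j$ with $j \neq i \pm 1$, then one of your fan endpoints $a_i^{(\cdot)}$ equals $v_j \in V(W)$, so $W'$ would contain a boundary vertex, contradicting the intended conclusion that $W'$ bounds the subgraph $H'$ induced by the interior neighbors of $W$. You handle only the degenerate chord $v_{i-1}v_{i+1}$ arising from $\rdeg_i = 4$ (Type C), but you never rule out the others. The paper devotes a full paragraph to this: after reducing Type C to Type D, it shows that any interior chord would force a leaf-face of the weak dual to be a face of $G$ with a vertex of $W$ incident to zero interior edges, or else two leaf-faces both incident to the unique $\rdeg=1$ vertex, either way a contradiction. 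This no-chords lemma is what makes the fan decomposition and the gluing identity $a_i^{(d_i^*)} = a_{i+1}^{(1)}$ valid, and your proposal needs it.

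Once those two gaps are closed, the rest of your fan bookkeeping (the outer-degree pattern on $W'$ and the Type C/E case analysis) matches the paper's Table~\ref{t:type} computation.
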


\begin{proof}
If $H$ has only one vertex $v$, then $\deg(v)=\rdeg_0(W)$.
If $H$ contains a leaf $v$, which is the $i$th vertex of $W$,
then $\deg(v)=1+\rdeg_i(W)$.
Since $\deg(v)=6$ for all $v\in V(H)$
and $\rdeg_i(W)\le 4$ for all $i\in Z_m$, both cases give a contradiction.

Suppose that $H$ contains a 2-connected leaf-block $B$ with cut-vertex $v$,
with $v$ appearing as the $i$th and $j$th vertex of $W$ as it enters and exits $B$.  Then $\deg_B(v)\ge 2$
and $\deg_H(v)\ge \deg_B(v)+1$, so $\rdeg_i(W) + \rdeg_j(W) \le \deg(v)-\deg_H(v) \le 6-3=3$.
Then the outer degree sequence of $W$ must be type~E with $\{\rdeg_i(W),\rdeg_j(W)\}=\{1,2\}$.
Also, the three previous inequalities must be equalities; in particular, $\deg_H(v)=3$.
Therefore, $v$ is incident to a single-edge (cut-edge) block of $H$, and it and $B$ are the only two blocks of $H$ that contain $v$.

Since the previous argument applies to any $2$-connected leaf-block of $H$, there is at most one $2$-connected leaf-block in $H$.
Since $H$ has no leaves, the block-cutpoint tree of $H$ has at most one leaf.
However, any nontrivial tree has at least two leaves, so the block-cutpoint tree must have only one vertex.
That is, $H$ must be $2$-connected.  Then $W$ is a cycle.

Since $W$ is a cycle, each vertex $v_i$ in $W$ is incident to exactly two edges in $W$ and $\rdeg_i(W)$ edges on the exterior of $W$.
Also $\deg(v_i)=6$, so $v_i$ is incident to exactly $4-\rdeg_i(W)$ edges on the interior of $W$.
(See Table~\ref{t:rdeg}, ignoring the last column.)

\begin{table}[ht]
\begin{center}
\begin{tabular}{c|c||c|c}
$\rdeg_i(W)$&number of such $i$&$4-\rdeg_i(W)$&$3-\rdeg_i(W)$\\
\hline
3&at most two&1&0\\
4&at most one&0&(none)\\
1&at most one&3&2\\
2&$m$, $m-1$ or $m-2$&2&1\\
\end{tabular}
\end{center}
\caption{The number of edges incident to $v_i$ on the exterior of $W$,
the frequency of each case, the number of edges incident to $v_i$ on the interior of $W$,
and the number of edges of $W'$ that form a triangle with $v_i$
(when $W$ is of type A, B, C, D, or E and $m=|W|$)}
\label{t:rdeg}
\end{table}

If $W$ is of type~C and $v_i$ is its vertex with $\rdeg_i(W)=4$,
then $v_i$ is incident to no edges on its interior, so $v_{i-1}v_iv_{i+1}$ is a triangle on the interior of $H$.
Removing $v_i$ from $W$ yields a cycle of length $m-1$ of type~D that bounds the outerplane graph $H-v_i$.
For convenience, rename $W$, $m$, and $H$ to be these values instead.
Now $W$ is of type A, B, D, or E.  Note that every value of $4-\rdeg_i(W)$ is $2$ except for at most one $3$
and up to two $1$s.

Suppose that $W$ has at least one chord on its interior.
(A {\em chord} of $W$ is an edge of $G-E(W)$ with endpoints in $W$.)
By the choice of $H$, we may assume
without loss of generality that $E(H)$
is the union of $E(W)$ and all chords of $W$ that lie in the interior of $W$.
Then the weak dual of $H$ is a nontrivial tree, with at least two
leaf-faces.
Let $v_iv_j$ be a chord incident to a leaf-face $f$ of the weak dual of $H$,
and let $B$ be the boundary cycle of $f$.  For any edge $xy$ in $B$, there is
a triangle $xyz$ in $G$ in $B\cup f$.  $B$ is chordless, so either $z$ is in $f$
or $B=xyz$.  If $B=xyz$, then $f$ is a face of $G$; letting $\{x,y\}=\{v_i,v_j\}$,
$z$ is incident to zero edges
on the interior of $W$ --- a contradiction.  Hence, $z$ is in $f$.  It follows
that every vertex of $B$, including $v_i$ and $v_j$, is incident to an edge in $f$.
At most one vertex of $W$ is incident to more than two edges on its interior.
Since $v_i$ and $v_j$ are each incident to an edge in $f$ and the edge $v_iv_j$,
without loss of generality we can assume that $v_i$ is incident to no other edges
in the interior of $W$.
Let $v_iv_jz$ be the unique triangle of $G$ that contains $v_iv_j$ and is not in $B\cup f$.
Then $v_iz$ must be an edge of $W$.  As before, $v_iv_jz$ does not bound a
leaf-face of the weak dual of $H$, because then $z$ would be incident to zero edges on the interior of $W$.
Therefore $v_jz$ must be a chord of $W$.  Hence, $v_j$ is the unique vertex of $W$ that
is incident to $3$ edges on the interior of $W$.
Recall that $H$ has a leaf-face $f'\not=f$.  Applying the same argument to $f'$,
we find that $f'$ is also incident to $v_j$, and that $v_j$ is incident to an edge in $f'$.
This edge is not a chord and it is not in $f$, so $v_j$ is incident to four edges in the
interior of $W$ --- a contradiction.
Thus, we have shown that $W$ has no chords in its interior.

For each $i\in Z_m$, the vertices adjacent to $v_i$ in the interior of $W$ form
a path $Q_i$, such that for each edge $xy$ in $Q_i$, $v_ixy$ bounds a face of $G$.
Let $W'$ be the closed walk obtained by concatenating these paths.  By construction,
$W'$ bounds an outerplane graph, oriented in the counterclockwise direction.
$Q_i$ has $4-\rdeg_i(W)$ vertices, so its length is $3-\rdeg_i(W)$.
See Table~\ref{t:rdeg} on the right.  Thus, each $v_i$ in $W$ yields one edge in $W'$,
except if $\rdeg_i(W)$ is~$3$ or~$1$, in which case $v_i$ yields $0$ or~$2$ edges in $W'$.
This shows that $W'$ has the length specified by Table~\ref{t:type} when $W$ is type A, B, D, or E.

\begin{table}[ht]
\begin{center}
\begin{tabular}{c||c|c}
Type of $W$&length of $W'$&Type of $W'$\\
\hline
A&$m$&A\\
B&$m-1$&B\\
C&$m-3$&C\\
D&$m-2$&C\\
E&$m$&E\\
\end{tabular}
\end{center}
\caption{How the type and length of $W$ determine the type and length of $W'$ (where $m=|W|$)}
\label{t:type}
\end{table}

For each $v_i'$ in $W'$, $\rdeg_i(W')$ is the number of paths $Q_i$ that contains $v_i'$.
Thus, when $Q_i$ and $Q_{i+1}$ have length at least one, they meet at a vertex $v_i'$ with
$\rdeg_i(W')=2$; when $Q_i$ has length two (when $W$ is type E) it yields on vertex $v_i'$ with $\rdeg_i(W')=1$;
when $|Q_i|=|Q_{i+1}|=0$ (type D) or $|Q_i|=0$ (type B), there is a vertex $v_i'$ with
$\rdeg_i(W')=4$ or $\rdeg_i(W')=3$.  Altogether, this shows that $W'$ has the type indicated in
Table~\ref{t:type}, when $W$ is type A, B, D, or E.

When $W$ is type~C, then it was replaced by a walk of type~D and length $m-1$, so by the previous
two paragraphs, $W'$ should have length $(m-1)-2$ and be type~C, as indicated in Table~\ref{t:type}.
\qed
\end{proof}

When $N_{3r+1}[x]$ does not intersect $U$, we will produce a
{\em triangulated cylinder} (recall definition from Subsection~\ref{S:def}).

\begin{lemma}\label{just need it part 1}
Suppose that $G$ is a plane triangulation, with $P_0$, $x$, and $r$ defined as before.
If $N_{3r+1}[x]$ does not intersect $U$,
then $G$ contains a $(w,\ell)$-cylinder $H$ with no interior vertices in $U$, such that $w\in\{2r,2r+1\}$ and $\ell \ge 2 (\lfloor{|P_0|/2}\rfloor -r-d_U-1)$.
\end{lemma}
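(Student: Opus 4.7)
The plan is to use Lemma~\ref{L:1cycleABCDE} to locate a short cycle $C'$ near $x$ that will serve as a cross-section of the cylinder, and then iteratively apply Lemma~\ref{L:W'} to peel off successive layers on both sides of $C'$, assembling them into the desired triangulated cylinder $H$. The $U$-free hypothesis $N_{3r+1}[x]\cap U=\emptyset$, together with Lemma~\ref{L:P and N_i[x]}, will guarantee that the iteration stays within a region where every vertex has degree~$6$, so the hypotheses of Lemma~\ref{L:W'} are maintained throughout.

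First, since every vertex of $N_r[x]$ has degree~$6$, Lemma~\ref{L:1cycleABCDE} produces a cycle $C'\subseteq N_r[x]$ of type A, B, C, D, or E with $|C'|\le 2r+1$. The first substantive step is to argue that under our strengthened hypothesis $C'$ must be of type A or E, so that $w:=|C'|\in\{2r,2r+1\}$ matches the required width. The idea is that by Table~\ref{t:type} the types B, C, and D each \emph{strictly} reduce the walk length under Lemma~\ref{L:W'}, while types A and E preserve it. Iterating Lemma~\ref{L:W'} from a hypothetical type B/C/D cycle contracts the walk; each intermediate walk lies in $N_{r+k}[x]\subseteq N_{3r+1}[x]$ for $k\le 2r+1$, so its vertices still have degree~$6$ and the iteration remains valid. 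Because $|C'|\le 2r+1$, after $O(r)$ iterations the walk's length drops below~$3$, which is impossible for a boundary walk of an outerplane subgraph of a simple graph. This contradiction rules out types B, C, and D.

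Once $C'$ is of type A or E, I iterate Lemma~\ref{L:W'} in each of the two possible orientations of $C'$ to obtain two chains of concentric cycles $C'=W_0,W_1,W_2,\ldots$ and $C'=W_0',W_1',W_2',\ldots$, all of length $w$ (since types A and E are preserved by Lemma~\ref{L:W'} together with their length, by Table~\ref{t:type}). Each iteration step produces a new ring of vertices adjacent to the previous cycle and a band of triangular faces between them. By the triangle inequality every vertex of $W_k$ lies in $N_{r+k}[x]$; combining the hypothesis with Lemma~\ref{L:P and N_i[x]} shows that every vertex in $N_j[x]$ for $j\le\lfloor|P_0|/2\rfloor-d_U-1$ has degree~$6$, so the degree hypothesis of Lemma~\ref{L:W'} is satisfied whenever $k\le\lfloor|P_0|/2\rfloor-r-d_U-1$. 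Hence each chain can be extended for at least this many steps.

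Concatenating the two one-sided chains through $C'$ gives a triangulated cylinder $H$ of width $w\in\{2r,2r+1\}$, length $\ell\ge 2(\lfloor|P_0|/2\rfloor-r-d_U-1)$, and no interior vertex in $U$ (every interior vertex has degree~$6$). The main obstacle is the type-classification step: careful bookkeeping is required to justify that the iteration of Lemma~\ref{L:W'} from a type B/C/D cycle remains valid long enough to force the length-below-$3$ contradiction, and that no subtle degeneracy (such as the walk becoming a repeated edge instead of a genuine cycle) occurs before the contradiction is reached. A secondary technical point is confirming that the two one-sided chains, produced by applying Lemma~\ref{L:W'} with opposite orientations of $C'$, glue together across $C'$ into a single $(w,\ell)$-cylinder rather than interacting in some pathological way; this should follow from the planarity of $G$ and the explicit structural description of the new layer given in the proof of Lemma~\ref{L:W'}.
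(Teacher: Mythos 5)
Your proposal follows essentially the same route as the paper: apply Lemma~\ref{L:1cycleABCDE} to get a short cycle near $x$, rule out types B, C, D, iterate Lemma~\ref{L:W'} on both sides of the resulting type-A or type-E cycle, and use Lemma~\ref{L:P and N_i[x]} to bound how far the iteration extends. The structure of the argument, the use of each supporting lemma, and the accounting that yields $w\in\{2r,2r+1\}$ and $\ell\ge 2(\lfloor|P_0|/2\rfloor-r-d_U-1)$ all match.

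The one place where you diverge is the mechanism of the contradiction that excludes types B, C, D. You argue that if the iteration stays valid for about $2r+1$ steps (which it does, since all the walks lie in $N_{3r+1}[x]$ where every vertex has degree~$6$), then the length would drop below $3$, which is impossible. This works, but it forces you to confront exactly the degeneracy worry you flag at the end: you must argue that the hypotheses of Lemma~\ref{L:W'} stay satisfiable for a walk of length $2$, so that the ``$W$ is a cycle'' conclusion delivers the contradiction rather than the hypotheses silently failing. The paper sidesteps this entirely. Instead of a length argument, it observes that the iteration must terminate at some index $j$ where $N_j^*$ first intersects $U$ (the walk $W_j$ is nonempty by Lemma~\ref{L:W'}, so $N_j^*$ cannot be empty); the length decrease for types B/C/D then forces $j\le 2r$, placing $N_j^*$ inside $N_{3r+1}[x]$, which contradicts the hypothesis that $N_{3r+1}[x]$ avoids $U$. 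That version needs only $|W_j|\ge 1$, not a cycle of length $\ge 3$, so it dodges the bookkeeping you identified as the main obstacle. If you want to keep your length-based contradiction, you should spell out why a walk of length $<3$ cannot satisfy the hypotheses of Lemma~\ref{L:W'} (e.g., because Lemma~\ref{L:W'} concludes such a walk is a cycle, impossible in a simple graph); but switching to the paper's $U$-intersection contradiction is cleaner and avoids the degeneracy analysis.

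One minor caution for the two-sided step: it is worth saying explicitly that types A and E are preserved when the orientation of $W_0$ is reversed (i.e., when the interior and exterior roles are swapped), which is what licenses running the same iteration outward. This follows from the observation that for a degree-$6$ vertex on a cycle, the outer and inner degree contributions at index $i$ are $\rdeg_i(W)$ and $4-\rdeg_i(W)$, and the multisets $\{2,\dots,2\}$ and $\{3,1,2,\dots,2\}$ are each invariant under $d\mapsto 4-d$.
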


\begin{proof}
Since $N_{3r+1}[x]$ does not intersect $U$, every vertex in $N_{3r+1}[x]$ has degree~6.
By Lemma~\ref{L:1cycleABCDE}, $N_r[x]$ contains a cycle $W_0$ of length at most $2r+1$
of type A, B, C, D, or E.  $W_0$ bounds an outerplanar subgraph of $G$, and it can be
oriented counterclockwise.  For any $i\ge 1$, let $N_i^*$ be the set of vertices in the
interior of $W_0$ at distance exactly $i$ from $W_0$.
Let $j$ be minimum such that $N_j^*=\emptyset$ or $N_j^*$ intersects $U$.
Since $W_0$ is in $N_r[x]$, $j>0$.
Then we can repeatedly apply Lemma~\ref{L:W'}
for $i=0,\ldots,j-1$ with $W=W_i$, which proves that $W_i$ is a cycle of type A, B, C, D, or E
and produces the closed (nonempty) walk $W_{i+1}=W'$ on vertex set $N_{i+1}^*$.
Therefore, $W_j$ is nonempty, so $N_j^*$  must contain a vertex of $U$.

According to Table~\ref{t:type}, if $W_0$ is type B, C, or D,
then for all $1\le i\le 2r+1$, every $W_i$ is type B or C.
Then $|W_{i+1}|\le |W_i|-1$ for all $0\le i\le j-1$, so $|W_j|\le |W_0|-j$.
$W_j$ is nonempty and $|W_0|\le 2r+1$, so $0< 2r+1-j$.
Since $W_0$ is in $N_r[x]$, $N_i^*\subseteq N_{r+i}[x]$ for all $i\ge 1$.
Then $N_j^*\subseteq N_{r+j}[x]\subseteq N_{3r+1}[x]$.  However, $N_{3r+1}[x]$
does not intersect $U$, so this is a contradiction.
Thus we may assume that $W_0$ is type A or E, in which case
$W_i$ is the same type and length as $W_0$ for all $1\le i\le j$, and
$|W_0|\in \{2r,2r+1\}$ by Lemma~\ref{L:1cycleABCDE}.  Let $w=|W_0|$.
$W_i$ is a $w$-cycle for all $0\le i\le j-1$.

For any $i\ge 1$, let $N_{-i}^*$ be the set of vertices in the exterior of $W_0$
at distance exactly $i$ from $W_0$, and let $k$ be the minimum such that
$N_{-k}^*=\emptyset$ or $N_{-k}^*$ contains a vertex of degree not equal to $6$.
Now, reembed $G$ in the plane such that the interior and exterior of $W_0$ are switched
and repeat the previous argument.  This yields $w$-cycles $W_{-i}$ on $N_{-i}^*$ for all
$0\le i\le k-1$.  The cycles $W_i$ for $-(k-1)\le i\le j-1$ and the edges between
consecutive cycles form a $(w,\ell)$-cylinder with $w=|W_0|$ and $\ell=j+k-2$.

$N_j^*$ and $N_{-k}^*$
each contain a vertex of $U$, so $N_{r+j}[x]$ and $N_{r+k}[x]$ do too, since $W_0$ is in $N_r[x]$.
By Lemma~\ref{L:P and N_i[x]}, $r+j\ge \lfloor{|P_0|/2}\rfloor-d_U$ and $r+k\ge \lfloor{|P_0|/2}\rfloor-d_U$.
Then $\ell \ge j+k-2\ge 2 (\lfloor{|P_0|/2}\rfloor -r-d_U-1)$.
\qed
\end{proof}

The following results are proved in~\cite{ArxivKingMJP}.

\begin{lemma}[King and Pelsmajer~\cite{ArxivKingMJP}] \label{just need it part 2}
Suppose that $G$ is a plane triangulation and $\Delta(G)\le 6$.
If $H$ is a $(w,\ell)$-cylinder in $G$ with $\ell$ maximal,
then $G-V(H)$ has at most $w(w-1)$ vertices.
\end{lemma}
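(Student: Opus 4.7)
The plan is to separately bound the number of vertices in each of the two disks cut off by the boundary cycles $B_1, B_2$ of $H$ in $G$. Since $G$ is a plane (equivalently, spherical) triangulation and $H$ is a subgraph whose two $w$-faces are filled by the rest of $G$, each $B_i$ bounds a closed disk $D_i$ that is itself a plane triangulation of a disk, and $V(G)\setminus V(H) = (V(D_1)\setminus V(B_1))\sqcup(V(D_2)\setminus V(B_2))$. It therefore suffices to show $|V(D_i)\setminus V(B_i)| \le w(w-1)/2$ for each $i$.

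Fix a cap, say $D_1$, and set $W_0 = B_1$, oriented counterclockwise so that $D_1$ lies on its right. I would iteratively apply Lemma~\ref{L:W'}: given a cycle $W_i$ of type A, B, C, D, or E whose every vertex has degree $6$ in $G$, the lemma produces a closed walk $W_{i+1}$ on the vertices at distance exactly $i+1$ from $B_1$ inside $D_1$, with length and type given by Table~\ref{t:type}. The iteration continues as long as $W_{i+1}$ is again a cycle of type A--E with all vertices of degree $6$.

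The key use of the maximality of $\ell$ is this: if some $W_i$ were a type A cycle of length $w$ with all vertices of degree $6$, then $W_{i+1}$ would also be type A of length $w$, and the ring of triangles between $W_i$ and $W_{i+1}$ that Lemma~\ref{L:W'} builds has exactly the structure of a $(w,1,k')$-cylinder ring for some shift $k'$. Starting with $i=0$, this ring could be glued onto $H$ to form a $(w,\ell+1)$-cylinder, contradicting maximality of $\ell$. Hence no layer can be type A of length $w$ with all degree-$6$ vertices. By Table~\ref{t:type}, types B, C, D strictly decrease length by $1$, $3$, or $2$ respectively, while type E preserves length but forces a vertex of degree less than $6$ on $W_i$; such vertices are globally scarce, since $\sum_v(6-\deg_G(v)) = 12$ on the sphere, so type E cannot persist indefinitely. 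Thus the lengths $|W_1|, |W_2|, \ldots$ form a strictly decreasing sequence bounded above by $w-1$, giving $|W_i| \le w-i$ for $i \ge 1$ and a total of at most $(w-1) + (w-2) + \cdots + 1 = w(w-1)/2$ vertices in $V(D_1)\setminus V(B_1)$. Summing over the two caps yields the desired bound $|V(G)\setminus V(H)| \le w(w-1)$.

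The main obstacle I anticipate is handling the cases in which Lemma~\ref{L:W'} does not directly apply: either some $W_i$ contains a vertex of degree less than $6$, or its outer degree sequence is not one of types A--E (for example, the boundary vertices of a twist-$0$ cylinder could all have outer degree $3$ into the cap). In those situations one must bound the remaining portion of the cap by a separate ad hoc argument, exploiting the global defect budget of $12$ together with the rigid local structure that the cylinder's twist $k$ and the hypothesis $\Delta(G)\le 6$ enforce near any low-degree vertex on the boundary.
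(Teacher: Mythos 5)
The paper does not prove this lemma; it is cited verbatim from King and Pelsmajer~\cite{ArxivKingMJP}, so there is no in-paper proof to compare against. Assessing your proposal on its own terms, the layer-peeling strategy has a structural flaw that you flag as an ``anticipated obstacle'' but that is in fact the whole problem.

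The iteration via Lemma~\ref{L:W'} cannot get started under the hypothesis that $\ell$ is maximal. Suppose every vertex of $B_1$ had degree $6$ in $G$. In the $(w,\ell,k)$-cylinder, $\deg_H(z_{a,0})=4$ for generic $a$, while $\deg_H(z_{k,0})=3$ and $\deg_H(z_{0,0})=5$ when $0<k<w$ (and $\deg_H(z_{a,0})=4$ for all $a$ when $k=0$). Hence the outer degree sequence of $B_1$ into the cap would be type~A when $k=0$ and type~E when $0<k<w$ --- not ``all outer degree $3$'' as your last paragraph speculates. By Lemma~\ref{L:W'} and Table~\ref{t:type}, the next layer $W_1$ would again be type~A (respectively type~E) of the same length $w$, and the ring of triangles between $W_0$ and $W_1$ is exactly a $(w,1)$-cylinder layer (with matching twist for the type~E case, since the $\rdeg=1$ and $\rdeg=3$ positions of $W_1$ propagate from those of $W_0$). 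Gluing this ring onto $H$ would give a $(w,\ell+1)$-cylinder, contradicting maximality. So maximality of $\ell$ \emph{forces} a vertex of degree other than~$6$ onto $B_1$, and consequently Lemma~\ref{L:W'} never applies to your $W_0=B_1$. What you treat as a fallback case is the only case.

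Separately, the claim that type~E ``forces a vertex of degree less than $6$ on $W_i$'' is false: Lemma~\ref{L:W'} requires \emph{every} vertex of $W$ to have degree~$6$, and a type~E cycle simply has one vertex with $\rdeg_i(W)=1$ and one with $\rdeg_j(W)=3$ --- these are counts of edges on the cap side, not statements about vertex degree. Type~E can self-perpetuate at constant length with all degrees equal to~$6$, so the strictly decreasing bound $|W_i|\le w-i$, and hence the count $w(w-1)/2$ per cap, does not follow from the cases you list. Taken together, these two points show the proof must bound the cap directly in the presence of low-degree vertices on $B_1$ (for example by a curvature or discharging argument on the triangulated disk exploiting $\Delta(G)\le 6$ and $\sum_v(6-\deg v)=12$), rather than by iterating Lemma~\ref{L:W'} from $B_1$ inward.
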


\begin{lemma}[King and Pelsmajer~\cite{ArxivKingMJP}] \label{just need it part 3}
Suppose that $H$ is a $(w,\ell)$-cylinder.  Then $H$ has $w(\ell+1)$ vertices and
there is a set of $\lceil{\frac{\ell}{7}}\rceil(w+2)$ vertices on $H$ that dominates all
vertices on the interior of $H$.
\end{lemma}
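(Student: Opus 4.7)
The vertex count $w(\ell+1)$ is immediate from the definition, since the vertex set $\{z_{a,b} : a \in Z_w,\ 0 \le b \le \ell\}$ has no identifications. So the real work is in constructing the dominating set. My plan exploits the fact that a $(w,\ell,k)$-cylinder $H$ locally resembles the infinite triangular grid $G_\infty$: each interior vertex of $H$ has the same 6-neighborhood combinatorial structure as a vertex of $G_\infty$, so the $1/7$-density pattern $D_\infty$ of Figure~\ref{F:infinite-grid} should extend approximately to $H$. I would partition the rows into $\lceil \ell/7 \rceil$ horizontal bands, each consisting of (at most) seven consecutive rows, and select $w+2$ vertices per band.

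For each band, the main $w$ selected vertices have the form $\{z_{a,f(a)} : a \in Z_w\}$, where $f(a)$ is chosen (depending on the band) so that the selected vertices trace out a shifted copy of $D_\infty$ with exactly one vertex per column of the band. If $H$ were globally isomorphic to a piece of $G_\infty$, these $w$ vertices alone would dominate every vertex in the band together with the rows immediately above and below it, by the domination property of $D_\infty$. The two extra vertices per band are used to repair the seam: the cyclic identification $a \equiv a+w$, together with the shift parameter $k$ at which the direction of diagonal edges changes, prevents the $D_\infty$ pattern from wrapping around perfectly. One extra selected vertex placed near each of the two transition points ($a=0$ and $a=k$) suffices to dominate the vertices whose neighborhoods straddle the seam.

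Summing $w+2$ over the $\lceil \ell/7 \rceil$ bands yields the claimed bound $\lceil \ell/7 \rceil(w+2)$. The main obstacle I expect is verifying that $w+2$ vertices per band genuinely suffice in every case; this will require a case analysis depending on $w$ and $\ell$ modulo $7$ and on the value of $k$, together with a careful choice of how the pattern in one band meshes with the pattern in the next band so that vertices at shared boundary rows are dominated by at least one of the two bands. A useful simplification is that we only need to dominate \emph{interior} vertices, so selected vertices may lie on the boundary cycles at $b=0$ and $b=\ell$ and the boundary cycles themselves do not need to be dominated; this gives enough slack at the ends of the cylinder to make the first and last bands work with the same budget of $w+2$ vertices.
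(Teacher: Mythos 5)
Since Lemma~\ref{just need it part 3} is cited from~\cite{ArxivKingMJP} rather than proved in this paper, there is no in-paper proof to compare against; I can only assess your proposal on its own. The vertex count $w(\ell+1)$ is indeed immediate (and is already noted when the cylinder is defined), and your general strategy --- slice the $b$-range into $\lceil\ell/7\rceil$ bands of $7$ consecutive columns, select roughly one vertex per row $a$ per band following a $D_\infty$-like shift, and add a constant per band to repair the seams --- is the right order of magnitude and is clearly inspired by the $1/7$-density property of $D_\infty$, so the high-level idea is sound.

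However, the argument has a genuine gap exactly where you flag it: you assert but do not verify that $w+2$ vertices per band suffice for all $w,\ell,k$. This is not a routine cleanup. The cylinder has \emph{two} defect lines per cycle (at $a=0$, where each vertex carries two outgoing diagonals to column $b+1$, and at $a=k$, where it carries none), so a $D_\infty$ pattern $b\equiv 2a+c$ chosen for $0\le a\le k$ must reverse its slope to $b\equiv -2a+c'$ for $k\le a\le w$, and the two offsets $c,c'$ cannot in general be chosen to match both at $a=k$ and around the wrap $a=w\equiv0$ unless $w\equiv 2k\pmod 7$. When the pattern fails to close up, a short calculation in the $k=0$ case (the cleanest one) already shows that up to four vertices per $7$-column band, spread over rows $0$ and $w-1$, lose their $D_\infty$-dominator; arguing that two additional carefully placed vertices always cover these --- and the analogous defects at $a=k$ when $k>0$, where the local rotation is reflected rather than matching $G_\infty$ directly --- requires the very case analysis you say you ``expect'' to need but do not carry out. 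You also need to check that vertices on the shared boundary between consecutive bands are dominated by one band or the other under whatever per-band shift you choose, which constrains the choices of $f(a)$ across bands. Until that verification is done, the bound $\lceil\ell/7\rceil(w+2)$ is asserted rather than proved.
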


We will see that the size $\lceil{\frac{\ell}{7}}\rceil(w+2)$ is efficient enough --- that is, it uses roughly one-sixth of
the vertices on the $(w,\ell)$-cylinder or less --- when $w\ge 12$.  However, we must prove a new result for the cases when $w$ is small.

\begin{lemma} \label{L:new cylinder}
If $H$ is a $(w,\ell)$-cylinder and $3\le w\le 12$, then $H$ contains a set of at most $\frac{|V(H)|}{6}+12=\frac{1}{6}w(\ell+1)+12$ vertices
that dominates the interior vertices of $H$.
\end{lemma}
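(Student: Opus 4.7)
My plan is to construct, for each $w\in\{3,\ldots,12\}$, an explicit periodic dominating set of the interior of the $(w,\ell,k)$-cylinder of size at most $\tfrac{1}{6}w(\ell+1)+12$. Since every interior vertex has degree $6$, the cylinder locally looks like the infinite triangular grid $G_\infty$, which admits density-$1/7$ dominating sets; density $1/6$ on the cylinder with a constant-size correction should therefore be achievable, once the construction is reconciled with the cylinder's wrap-around. The strategy divides on the parity of $w$: for $w$ even, a period-$3$ row pattern suffices; for $w$ odd, a finer period-$6$ pattern is required.

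For $w$ even (that is, $w\in\{4,6,8,10,12\}$), I would take $z_{a,b}$ whenever $a$ is even and $b\equiv 0\pmod 3$. The count is $(w/2)\lceil(\ell+1)/3\rceil\le \tfrac{1}{6}w(\ell+1)+\tfrac{w}{2}\le \tfrac{1}{6}w(\ell+1)+12$. Domination holds because: alternating even-indexed vertices dominate an even $w$-cycle; for a vertex in a row adjacent to a selected row, its vertical and diagonal neighbors in that row have column indices of opposite parity, so at least one is selected; and rows at distance~$2$ from the nearest selected row are at distance~$1$ from another selected row. The case $w=3$ is analogous: select $\{z_{0,b}:b\text{ even}\}$, giving $\lceil(\ell+1)/2\rceil$ vertices within the bound; one vertex dominates the $3$-cycle, and a brief check on $k\in\{0,1\}$ (recalling the WLOG assumption $k\le w/2$) handles the adjacent rows via vertical and diagonal edges.

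The main obstacle is the case of odd $w\ge 5$ (i.e., $w\in\{5,7,9,11\}$), where the analogous ``every other vertex in every third row'' uses $(w+1)/2$ vertices per row and produces a density exceeding $1/6$ by $\Theta(\ell)$, which breaks the additive bound. My plan here is to refine the period from $3$ to $6$: in rows $b\equiv 0\pmod 6$ take the $\lceil w/2\rceil$ vertices of one parity, and in rows $b\equiv 3\pmod 6$ take the $\lfloor w/2\rfloor$ vertices of the opposite parity, with the parity assignment chosen based on $k\pmod 2$ to properly handle the exceptional column $a=k$ (where the up-diagonal is absent). This places exactly $w$ vertices per $6$-row block, gives density $1/6$, and yields total size at most $\tfrac{1}{6}w(\ell+1)+w\le \tfrac{1}{6}w(\ell+1)+12$. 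Verifying the gap rows $b\equiv 1,2,4,5\pmod 6$ uses that each such vertex has two neighbors in the nearest selected row (vertical and diagonal) with column indices of opposite parity, so at least one is selected. The delicate point is the interaction of the period-$6$ pattern with the $Z_w$ wrap-around when $w$ is not a multiple of $6$; a careful choice of phase depending on both $k$ and $w\bmod 6$ avoids exceptions in the bulk, so that only a boundary correction of size $O(w)=O(1)$ near rows $b=0$ and $b=\ell$ remains, and this is absorbed by the $+12$ slack.
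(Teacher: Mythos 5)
Your overall plan---an explicit periodic dominating pattern for each pair $(w,k)$---is the same strategy the paper follows, but both of your constructions break down at the twist seam row $a=k$, so the argument does not go through as written.

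For $w$ even, you propose the fixed offset ``$a$ even, $b\equiv 0\pmod 3$''. This fails whenever $k$ is odd. Take $w=6$, $k=3$ and $z_{3,b}$ with $b\equiv 2\pmod 3$: since $a=k$, this vertex has no up-diagonals, and its six neighbors are $z_{2,b},z_{4,b},z_{3,b-1},z_{3,b+1},z_{2,b-1},z_{4,b-1}$, whose column residues mod~$3$ are $\{2,2,1,0,1,1\}$. The only neighbor in a selected column is $z_{3,b+1}$, which lies in an odd row and is not selected, so $z_{3,b}$ is undominated. (The same holds for every even $w$ and odd $k\le w/2$, e.g.\ $w=4$, $k=1$.) Your ``opposite parity'' justification is implicitly a mod-$2$ argument while the pattern is mod-$3$, and at the seam row both diagonal neighbors point to column $b-1$ rather than one to each side. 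This is precisely why the paper lets the column offset vary from one even row to the next and chooses the offsets greedily.

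For odd $w\ge 5$ the problem is structural, not a matter of phase. When $k>0$, both row $0$ (which has two up-diagonals and no down-diagonals) and row $k$ (no up-diagonals, two down-diagonals) force a constraint the period-$6$ alternation cannot satisfy. For $z_{k,b}$ with $b\equiv 2\pmod 6$, the only neighbor in a selected column is $z_{k,b+1}$ (column $\equiv 3$); for $b\equiv 5\pmod 6$, the only such neighbor is again $z_{k,b+1}$ (column $\equiv 0$). Hence row $k$ must be selected in columns $\equiv 0$ and $\equiv 3$ simultaneously, but you assign those columns opposite parities, so row $k$ can be selected in at most one of them no matter how the phase is chosen. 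The analogous argument for row $0$ uses $b\equiv 1$ and $b\equiv 4$. The undominated vertices at row $k$ (or row $0$) occur once every $6$ columns along the entire length of the cylinder, a set of size $\Theta(\ell)$; this is not a boundary effect near $b\in\{0,\ell\}$ and cannot be absorbed by the $+12$ slack. (Your pattern does work when $k=0$, but the lemma must also cover $1\le k\le w/2$.) The paper handles $w\in\{5,7,11\}$ not by any uniform period-$6$ alternation but by explicitly verified patterns, one for each $(w,k)$, with periods $5$, $6$, or $7$; note also that $w=9$ is already covered by the multiple-of-$3$ case, so your odd list should exclude it.
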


\begin{proof}
Let $Z=\{z_{a,b} : a\in Z_w, 0\le b\le \ell\}$ be the vertex set of $H$, and note that $|Z|=w(\ell+1)$.
Let $Z'=\{z_{a,b} : a\in Z_w, 0< b< \ell\}$, the vertices on the interior of $H$.

For all integers $w,k$ with $3\le w\le 12$ and $0\le k< w$, we will give an integer $m=m(w,k)$
and a set $S\subseteq \{ z_{a,b} : a\in Z_w, 0\le b<m \}$ such that (i)
every $z_{a,b}\in Z'$ is dominated by some $z_{c,d}\in Z$ such that $z_{c,d\bmod m}\in S$, and
(ii) $|S| \le \min(\frac{1}{6}mw,12)$.
If we have such $m$ and $S$,
then $S_Z=\{z_{a,b}\in Z : z_{a,b\bmod m}\in S\}$ is a set of size at most $|S|\lceil{(\ell+1)/m}\rceil$ in $Z$ that dominates $Z'$.
Since $|S|\lceil{(\ell+1)/m}\rceil\le |S|(\frac{\ell}{m}+1)\le \frac{1}{6}w \ell +12$, this will finish the proof.

Thus, it remains to find such $m,S$ for all $w,k$ such that $3\le w\le 12$ and $0\le k< w$.
Recall that we may assume that $0\le k\le w/2$.
In each case of the proof, we describe $S$ via a figure where $z_{a,b}$ is located on row $a$ and column $b$, showing
rows $0$ to $w$ (row $0$ and row $w$ are identified) and columns $0$ to at least $m$.
When $b=0\equiv w\,(\bmod\,w)$, then $z_{a,b}$ is shown twice, but once as a hollow dot (for example, in Figure~\ref{F:w=6}).
The figure will make it clear that
$S_Z$ dominates $Z'$ as desired.

We begin with two general cases: when $w$ is a multiple of $2$ or $3$.

Consider any case where $w$ is a multiple of $3$.
No matter what $k$ is, any three consecutive rows of $Z'$ can be dominated by taking every other vertex on the middle row.
(For example, see Figure~\ref{F:w=6} on the left).
Thus, we have $m,S$ with $m=2$ and $|S|=w/3$. Clearly $S_Z$ dominates $Z'$
and $|S| = \frac{1}{6}mw \le 12$, so this suffices.

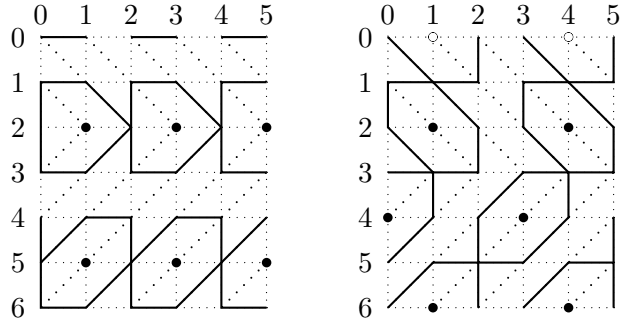
\begin{figure}[ht]
\begin{center}
\begin{tabular}{ccc}
\begin{pspicture}(3.0,4.0)
\psset{unit=.6cm}
\multido{\iA=0+1}{6}{%
    \rput(\iA,6.5){$\iA$}}
\multido{\iA=0+1,\iB=6+-1}{7}{%
    \rput(-0.5,\iB){$\iA$}}
  \psgrid[subgriddiv=1,griddots=6,gridlabels=0pt,gridwidth=.5pt](5,6)
  \multido{\rA=0+1,\rB=1+1}{5}{%
    \psline[linestyle=dotted](\rA,0)(\rB,1)
    \psline[linestyle=dotted](\rA,1)(\rB,2)
    \psline[linestyle=dotted](\rA,2)(\rB,3)
    \psline[linestyle=dotted](\rA,3)(\rB,4)
    \psline[linestyle=dotted](\rA,5)(\rB,4)
    \psline[linestyle=dotted](\rA,6)(\rB,5)}
  \multido{\rA=1+2}{3}{%
    \psdots(\rA,1)
    \psdots(\rA,4)}
  \multido{\rA=0+2,\rB=1+2,\rC=2+2}{2}{%
    \pspolygon(\rA,0)(\rB,0)(\rC,1)(\rC,2)(\rB,2)(\rA,1)
    \pspolygon(\rA,3)(\rB,3)(\rC,4)(\rB,5)(\rA,5)(\rA,4)
    \psline(\rA,6)(\rB,6)}
  \psline(5,0)(4,0)(4,1)(5,2)
  \psline(5,3)(4,3)(4,5)(5,5)
  \psline(4,6)(5,6)
  \psline(0,1)(0,2)
\end{pspicture}
& \qquad\qquad &
\begin{pspicture}(3.0,4.0)
\psset{unit=.6cm}
\multido{\iA=0+1}{6}{%
    \rput(\iA,6.5){$\iA$}}
\multido{\iA=0+1,\iB=6+-1}{7}{%
    \rput(-0.5,\iB){$\iA$}}
  \psgrid[subgriddiv=1,griddots=6,gridlabels=0pt,gridwidth=.5pt](5,6)
  \multido{\rA=0+1,\rB=1+1}{5}{%
    \psline[linestyle=dotted](\rA,0)(\rB,1)
    \psline[linestyle=dotted](\rA,1)(\rB,2)
    \psline[linestyle=dotted](\rA,2)(\rB,3)
    \psline[linestyle=dotted](\rA,4)(\rB,3)
    \psline[linestyle=dotted](\rA,5)(\rB,4)
    \psline[linestyle=dotted](\rA,6)(\rB,5)}
  \psdots(1,0)(4,0)(0,2)(3,2)(1,4)(4,4)
  \psdots[dotstyle=o](1,6)(4,6)
  \psline(0,0)(1,1)(2,1)(2,0)
  \psline(3,0)(4,1)(5,1)(5,0)
  \psline(0,3)(1,3)(1,2)(0,1)
  \pspolygon(2,2)(2,1)(3,1)(4,2)(4,3)(3,3)
  \psline(5,1)(5,2)
  \pspolygon(0,4)(1,3)(2,3)(2,4)(1,5)(0,5)
  \pspolygon(3,4)(4,3)(5,3)(5,4)(4,5)(3,5)
  \psline(0,6)(1,5)(2,5)(2,6)
  \psline(3,6)(4,5)(5,5)(5,6)
\end{pspicture}

\end{tabular}
\end{center}
\caption{Dominating sets when $w=6$:
an example with $m=2$ and $k=2$, with $3m$ columns shown (left)
and an example with $m=3$ and $k=3$, with $2m$ columns shown (right)}
\label{F:w=6}
\end{figure}

Next, consider any case where $w$ is even.  Let $m=3$, and $S$ will contain one vertex from
each even-indexed row.  Then $|S|=\frac{1}{2}w = \frac{1}{6}mw \le 6$.
Clearly, $S_Z$ will dominate all vertices in each even-indexed row of $Z'$.
It remains to show that for any $k$, $S$ can be constructed so that
$S_Z$ dominates all vertices in all odd-indexed rows of $Z'$.

For each row $2i$, $S$ contains
either $z_{2i,0}$, $z_{2i,1}$, or $z_{2i,2}$, and this ``offset''
determines the entire pattern on that row in $S_Z$.
(For example, see Figure~\ref{F:w=6} on the right.)
Once the offset is
chosen for row $2i$, two of the three possible
offsets for row $2i+2\,(\bmod\,w)$ will ensure that all vertices in row $2i+1\,(\bmod\,w)$ of $Z'$ are dominated.
Thus, starting with $z_{2,1}$ in $S_Z$ for row $2$, the offsets for
rows $2i$ with $1\le i\le w/2-1$ can be chosen so that all vertices in rows $3,\ldots,w-3$ of $Z'$ are dominated.
Finally, of the three possible offsets for row $0$,
two will ensure that all vertices in row $w-1$ of $Z'$ are dominated, and
two will ensure that all vertices in row $1$ of $Z'$ are dominated; hence, there is an offset for row $0$
so that both rows are dominated.

It remains to consider the cases $w=5,7,11$, for all $0\le k< w$.
We consider each of these cases separately, giving a figure that shows $S$ in $\{z_{a,b} : 0\le b< m\}$ in the appendix
and noting that
$|S|\le 12$ and $\frac{|S|}{mw}\le \frac{1}{6}$ in Table~\ref{T:table}.  This completes the proof of Lemma~\ref{L:new cylinder}.
\qed
\end{proof}

\begin{table}[ht]
\begin{center}
\begin{tabular}{|c|c||c||c|c|c|}
\hline
$w$&$k$&Figure&$|S|$&$m$&$|S|/(mw)$\\
\hline
5&0&8&4&5&4/25\\
5&1&8&5&7&1/7\\
5&2&8&5&6&1/6\\
7&0&9&7&7&1/7\\
7&1&9&8&7&8/49\\
7&2&10&8&7&8/49\\
7&3&10&8&7&8/49\\
11&0&11&12&7&12/77\\
11&1&11&12&7&12/77\\
11&2&11&9&5&9/55\\
11&3&12&12&7&12/77\\
11&4&12&12&7&12/77\\
11&5&12&12&7&12/77\\
\hline
\end{tabular}
\end{center}
\caption{All cases with $w=5,7,11$}
\label{T:table}
\end{table}

At this point, we break the argument into two proofs.

\subsection{Finishing the proof of Theorem~\ref{T:n/6}}\label{SS:3}

Suppose that $\Delta(G)\le 6$ and $U=U_0=\{v\in V(G):\deg(v)\not=6\}$.  Then $T=T_0$, $|\Uo|=d_U=0$,
$|U_0|=|U|\le 12$, and
there is a dominating set $D$ of $G$ with $|D|\le \frac{n+8|V(T)|-2}{7}$.
Also, $|U_0|>1$ because $|U|\ge 4$, so
we have $P_0$ and $r$ with
$n>3(r-1)^2$, and Lemma~\ref{L:P and N_i[x]} applies.

Let $c=1.05\times 10^7$.
If $|D|\le n/6+c$ then we are done, so we may assume that
$\frac{1}{7}\left(n+8|V(T)|-2\right)> n/6+c$, or equivalently,
$n < 48|V(T)|-42c-12$.  $|V(T)|=|V(T_0)|\le (2|U_0|-3)|P_0|+1 \le 21|P_0|+1$, so
$n <1008|P_0|-42c+36$ and $|P_0|> \frac{n-36+42c}{1008}$. And since $c=1.05\times 10^7$,
$|P_0|>\frac{42c-36}{1008}>42\times 10^4$.

\smallskip\noindent
{\bf Claim:}
{\em Every vertex in $N_{3r+1}[x]$ has degree~$6$.}

By Lemma~\ref{L:P and N_i[x]}, it is true if $3r+1<\lfloor |P_0|/2 \rfloor$.
Suppose that it is false.  Then $3r+1\ge \lfloor |P_0|/2 \rfloor \ge (|P_0|-1)/2$,
so $r\ge (|P_0|-3)/6$.
Since $n> 3(r-1)^2$, we get $n> \frac{1}{12}(|P_0|^2-18|P_0|+81)$.
Then $n <1008|P_0|-42c+36$ gives $12(1008|P_0|-42c+36) > |P_0|^2-18|P_0|+81$,
or equivalently, $12114 +(-504c+351)/|P_0| >|P_0|$.  Which contradicts $|P_0|>42\times 10^4$.
Thus the claim is proved.

\medskip

Now we may apply Lemma~\ref{just need it part 1} to obtain a $(w,\ell,k)$-cylinder $H$ with $\ell$ maximized.
By Lemmas~\ref{just need it part 1} and~\ref{just need it part 2}, $w=2r$ or $w=2r+1$, $\ell \ge |P_0|-2r-3$,
and $n\le w(\ell+1)+w(w-1)=w(\ell+w)$.  By Lemma~\ref{just need it part 3}, $H$ contains a set $S_H$ of at most $\lceil\frac{\ell}{7}\rceil(w+2)$ vertices that dominates its interior.  $V(G)-V(H)$ dominates itself and the
boundary of $H$, so
if we add $V(G)-V(H)$,
we get a set that dominates $G$; it has size at most $\lceil\frac{\ell}{7}\rceil(w+2)+n-w(\ell-1)$. We are done if this is at most $\frac{n}{6}+c$, so we may assume that
$\lceil\frac{\ell}{7}\rceil(w+2)+n-w(\ell-1)> \frac{n}{6}+c$.  Since $\lceil\frac{\ell}{7}\rceil\le \frac{\ell+6}{7}$ and $n\le w(\ell+w)$, we get
$w(\ell-1)-\frac{(\ell+6)(w+2)}{7}+c <\frac{5}{6}n \le \frac{5}{6}w(\ell+w)$.
It follows that $(w-12)\ell < 35w^2+78w+72-42c$.

First, consider the case $w\ge 13$.
Then $\ell < (35w^2+78w+72-42c)/(w-12) =
35w+498+\frac{6048-42c}{w-12}$.  Since $c>6048$ and $w\le 2r+1$, we have
$\ell< 35w+498\le 70r+533$. Since $\ell\ge |P_0|-2r-3$, we have $|P_0|< 72r+536$. With $r<1+\sqrt{n/3}$ and $|P_0|> \frac{n-36+42c}{1008}$, we can obtain
$n-36+42c - 1008(24\sqrt{3n}+608)<0$.
Let $f(x)=x^2-1008\cdot24\sqrt{3}x-1008\cdot 608-36+42c$; then $f(\sqrt{n})<0$.
Since $f(x)$ is a quadratic function with a positive quadratic term, $f(x)$ must have two roots.  Therefore
$(1008\cdot 24\sqrt{3})^2-4(-1008\cdot 608 + 42c-36)> 0$, which contradicts $c=1.05\times 10^7$ (but not by much, which explains our choice of $c$).
This completes the case $w\ge13$.

Next, suppose that $3\le w\le 12$.  By Lemma~\ref{L:new cylinder}, $H$ contains a set of size $S$ that dominates all vertices on its interior of $H$ with $|S|\le |V(H)|/6+12$.
If we add $V(G)-V(H)$ and all the vertices of the boundary of $H$ to $S$,
we get a set that dominates $G$; its size is at most $|V(H)|/6+12 +w(w-1)+2w$.
Since $w\le 12$ and $H\subseteq G$, we have
$|V(H)|/6+12 +w(w-1)+2w\le n/6+12+132+24<n/6+c$, as desired.
\qed

\subsection{Finishing the proof of Lemma~\ref{L:sphere}}\label{SS:4}

Recall that there is a dominating set $D$ of $G$ with $|D|\le \frac{n+9|V(T)|-7}{6}$.

If $|U_0|=1$, then $|V(T)|=1+|\Uo|$, so
\[
|D|\le \frac{n+9|\Uo|+2}{6} = \frac{n}{6}+ \frac{3}{2}|\Uo| + \frac{1}{3},
\]
as desired.  Thus, we may assume that $|U_0|>1$.

We have $P_0$ and $r$ with
$n>3(r-1)^2$.  Also, by Lemma~\ref{L:P and N_i[x]}, every vertex in $N_i[x]$ has degree~6
and is not in $U$ if  $i< \lfloor{|P_0|/2}\rfloor-d_U$.

\smallskip\noindent
{\bf Case 1} $|P_0|< 2\sqrt{3n}+2d_U+9$.

Then, since $|V(T_0)|\le (2|U_0|-3)|P_0|+1$ and $|V(T)|\le |V(T_0)|+ |\Uo|$,
we have $|V(T)|< (2|U_0|-3)(2\sqrt{3n}+2d_U+9)+|\Uo|+1$.  $G$ has a dominating set $D$
with $|D|\le \frac{n+9|V(T)|-7}{6}$.  Then
\[ |D| <
\frac{n}{6}+
\frac{3}{2}(2|U_0|-3)(2\sqrt{3n}+2d_U+9)
+\frac{3}{2}|\Uo|+ \frac{1}{3}.
\]

\noindent
{\bf Case 2} $|P_0|\ge 2\sqrt{3n}+2d_U+9$.

Since $n>3(r-1)^2$, we get $\sqrt{3n}>3(r-1)$, which
yields $|P_0|> 6r+2d_U+3$, so $|P_0|\ge 6r+2d_U+4$.
Then $\lfloor{|P_0|/2}\rfloor -d_U> 3r+1$, so by Lemma~\ref{L:P and N_i[x]},
every vertex in $N_{3r+1}[x]$ has degree~6 and is not in $U$.
By Lemma~\ref{just need it part 1}, $G$ has a $(w,\ell)$-cylinder $H$
with no interior vertices in $U$, such that $w\in\{2r,2r+1\}$ and $\ell \ge 2 (\lfloor{|P_0|/2}\rfloor -r-d_U-1)$.
Since $\lfloor{|P_0|/2}\rfloor \ge \frac{1}{2}(|P_0|-1)$ and
$\sqrt{3n}>3(r-1)$, we get
$\ell > 2\left(\frac{1}{2}(2\sqrt{3n}+2d_U+9-1) - \frac{1}{3}\sqrt{3n} -1 -d_U -1\right) = 4(1+\sqrt{n/3})$.

By Lemma~\ref{L:new cylinder}, the interior of the triangulated cylinder can be dominated by a set $S_Z$ of at most $\frac{w(\ell+1)}{6}+12$ vertices if $3\le w\le 12$.
In this case, $12\le 4w$, so we get $|S_Z|< \frac{w\ell}{6}+5w$.
If $w\ge 13$, then by Lemma~\ref{just need it part 3},
the interior of the triangulated cylinder can be dominated by a set $S_Z$ of at most $\lceil{\frac{\ell+1}{7}}\rceil(w+2)$ vertices.
Since $\lceil{\frac{\ell+1}{7}}\rceil\le \frac{\ell}{7}+1$, in this case we have
$|S_Z|\le (\frac{\ell}{7}+1)(w+2)=\frac{w\ell}{6}-\frac{(w-12)\ell}{42}+(w+2)< \frac{w\ell}{6}+2w$.
Thus, we have  $|S_Z|< \frac{w\ell}{6}+5w$ for all $w\ge3$.

In order to apply induction on $n$, we delete the $w(\ell-1)$ interior vertices of $(w,\ell)$-cylinder $H$. Let $C_1$ and $C_2$ be the boundary cycles of $H$; these now bound holes in the surfaces. Recall that every $w$-cycle on the cylinder has the exact same pattern of turns: none, or exactly one right turn and exactly one left turn which are at the same places around each cycle.  Thus we can identify $C_1$ and $C_2$ such
that corresponding turns are matched to each other.  Thus,
when two vertices of degree $6$ are identified, the resulting vertex will have degree $6$.
In this way, identify each pair of corresponding vertices $v_1,v_2$ from $C_1,C_2$ to get
a new vertex $v^*$ and a new $w$-cycle $C^*$.  This creates a new plane (or sphere)
triangulation $G^*$ with $n-w\ell$ vertices.

Still using $v_1,v_2,v^*$ to represent corresponding vertices on $C_1,C_2,C^*$,
we define disjoint subsets $U_0^*, \Us$ of $V(G^*)$
as follows: If $v_1$ or $v_2$ is in $U_0$, then put $v^*$ in $U_0^*$; also, let
$U_0^*\setminus V(C^*)=U_0\setminus V(H)$.  Note that $|U_0^*|\le |U_0|$.
If $v_1$ or $v_2$ is in $\Uo$ and $v^*\not\in U_0^*$, then put $v^*$ in $\Us$;
also, let $\Us\setminus V(C^*)=\Uo\setminus V(H)$. Note that $|\Us|\le |\Uo|$.
Let $U^*=U_0^* \cup \Us$; note that $U^*\setminus V(C^*)=U\setminus V(H)$
and that for each $v^*\in U^*\cap V(C^*)$, $v_1$ or $v_2$ is in $V(C)\cap U$.

We wish to show that
Definition~\ref{D:U} is satisfied by $G^*$ with $U^*,U_0^*,\Us,d_U$.

A vertex of degree not equal to 6 in $G^*$ is also a vertex of degree other than 6 in $G$
if it is not in $V(C^*)$, and if $v^*\in V(C^*)$ does not have degree 6 then at least one
of $v_1$ or $v_2$ does not have degree 6.  Therefore every vertex of degree not equal to 6
in $G^*$ is in $U^*$.

Any vertex $x\in \Us$ corresponds to a vertex $y\in \Uo$
(either $x=v^*\in V(C^*)$ and $y\in\{v_1,v_2\}$, or $x\not\in V(C^*)$ and $y=x$).
There is a path $P$ in $G$ from $y$ to $U_0$ of length at most $d_U$.  The vertices
of $P$ that lie in $H$ can be replaced by vertices on $C^*$ in a natural way so that we
get a walk in $G^*$ from $x$ to $U_0^*$ of length at most $|P|$.  Therefore, any vertex
in $\Us$ has distance at most $d_U$ in $G^*$ to $U_0^*$.

Since $U$ does not intersect the interior of $H$, each component of $G[U]$ becomes a
connected subgraph of $G^*[U^*]$ that contains at least one vertex of $U_0^*$.
Each component of $G^*[U^*]$ is the union of some of these subgraphs, so it also
intersects $U_0^*$.

Definition~\ref{D:U} is satisfied for $G^*$ with $U^*,U_0^*,\Us,d_U$,
so we can apply induction.  We get a dominating set $D^*$ that contains $U^*$ such that
\[|D^*| \le \frac{n-w\ell}{6} +
3(|U_0^*|-1)(2\sqrt{3(n-w\ell)}+2d_U+9) +
\frac{3}{2}|\Us| +
\frac{1}{3}.
\]

Since $|U_0^*|\le |U_0|$ and $|\Us|\le |\Uo|$, we get
\[|D^*| \le
\frac{n-w\ell}{6} +
3(|U_0|-1)(2\sqrt{3(n-w\ell)}+2d_U+9) +
\frac{3}{2}|\Uo| +
\frac{1}{3}.
\]

Temporarily set $x$ so that $\sqrt{n}-\sqrt{n-w\ell}= xw$.
Then $\sqrt{n}-xw=\sqrt{n-w\ell}$, so
$n+x^2w^2-2xw\sqrt{n}=n-w\ell$.  Then
$x^2w+\ell =2x\sqrt{n}$.
Since $\ell>4(1+\sqrt{n/3})$ and $x^2w>0$, we get
$4(1+\sqrt{n/3})<2x\sqrt{n}$.
Then $x>2\sqrt{1/3}$, so $\sqrt{n}-\sqrt{n-w\ell}> 2\sqrt{1/3}w$
and $2\sqrt{3(n-w\ell)} < 2\sqrt{3n} -4w$.
It follows that
\[|D^*|< \frac{n-w\ell}{6} +
3(|U_0|-1)(2\sqrt{3n}-4w+2d_U+9) +
\frac{3}{2}|\Uo| +
\frac{1}{3}.
\]

$G$ is dominated by the union of $D^*-V(C^*)$, $S_Z$, and $V(C_1)\cup V(C_2)$; this set
$D$ has size at most $|D^*|+(\frac{wl}{6}+5w)+2w$.  Since $|U_0|\ge 2$, we have
$3(|U_0|-1)(-4w)+5w+2w\le -5w<0$.  Thus, we get
\[ |D|<
\frac{n}{6} +
3(|U_0|-1)(2\sqrt{3n}+2d_U+9) +
\frac{3}{2}|\Uo| +
\frac{1}{3}
\]

Thus, whether $|U_0|\le 1$, or whether we are in one of the two cases where $|U_0|>1$, we obtain a dominating set $D$
for $G$ of the desired size.  This finishes the proof of the Lemma~\ref{L:sphere}.
\qed

\section{Small non-contractible cycles in non-orientable surfaces}\label{S:non-orientable}

In this section we prove Theorem~\ref{T:non}.

First we prove it for any triangulation $G$ on the projective plane $N_1$.
Given $G$ on $N_1$, let $C$ be a minimum-length non-contractible cycle.
$C$ must be one-sided.
Cut along $C$ and double $C$ alongside the cut, as in the $C$-derived construction from
Section~\ref{S:def}, but do not add the disk,
nor the extra vertex that goes in the disk.  This yields a
triangulated disk $G'$ bounded by a cycle $C'$ of length $2|C|$.
Label the vertices of $C'$ in clockwise order, as
$v_0,v_1,\ldots,v_{|C|}=v_0',v_1',\ldots,v_{|C|}'=v_0$;
then $v_j$ and $v_j'$ (for any $0\le j\le |C|$)
are copies of the same vertex in $C$.
Let $m=\lfloor{|C|/2}\rfloor$, let $x=v_m$, and for all $j\ge0$,
let $V_j$ be the set of vertices $v$ in $G'$ such that the distance $d(v,x)=j$.

\begin{lemma}\label{proj-plane}
$V_j$ contains a path $P_j$ from $v_{m-j}$ to
$v_{m+j}$ of length at least $2j$,
for all $j$ with $0\le j\le m$.
\end{lemma}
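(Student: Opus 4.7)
I plan to prove Lemma~\ref{proj-plane} by induction on $j$.

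\textbf{Base case and a preliminary observation.} For $j=0$, the trivial length-zero path at $x=v_m$ works. Before doing the induction step, I first need to know that $v_{m-j}$ and $v_{m+j}$ really do lie in $V_j$. The clockwise arc of $C'$ from $v_m$ to $v_{m-j}$ has length $j$, so $d_{G'}(v_m,v_{m-j})\le j$. For the reverse inequality, any $x,v_{m-j}$-path $Q$ in $G'$ projects to an $x,v_{m-j}$-walk in $G$. Concatenating this walk with the $j$-arc of $C$ inside $G$ (going the short way from $v_{m-j}$ back to $v_m$) produces a closed walk of length $|Q|+j$. A standard argument about a minimum non-contractible cycle shows that $C$ is \emph{locally geodesic} in $G$: if $|Q|<j$ we would obtain a non-contractible cycle shorter than $|C|$ by replacing one of the two arcs between $v_m$ and $v_{m-j}$ with $Q$, since at least one of the two resulting closed walks must be non-contractible (their homology classes sum to $[C]\ne 0$). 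Hence $d_{G'}(v_m,v_{m-j})=j$, and likewise $d_{G'}(v_m,v_{m+j})=j$.

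\textbf{Inductive step.} Assume $P_{j-1}\subseteq V_{j-1}$ is a path from $v_{m-j+1}$ to $v_{m+j-1}$ of length at least $2(j-1)$. Let $A_{j-1}$ be the arc of $C'$ from $v_{m-j+1}$ through $x$ to $v_{m+j-1}$; it has length $2(j-1)$ and its internal vertices are at distance strictly less than $j-1$ from $x$, so it meets $P_{j-1}$ only at the two endpoints. Therefore $P_{j-1}\cup A_{j-1}$ is a cycle in the triangulated disk $G'$, bounding a closed region $R$ that contains $x$. Among all choices of $P_{j-1}$ satisfying the induction hypothesis, I choose one that maximizes $R$ (pushed as far as possible from $x$); this will force the next layer to sit properly in $V_j$.

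\textbf{Constructing $P_j$.} On the side of $P_{j-1}$ opposite to $x$, I walk around $P_{j-1}$ through the triangulated fans. Concretely, $v_{m-j}$ is adjacent to $v_{m-j+1}$ on $C'$, so I begin at $v_{m-j}\in V_j$ and enter the first triangle incident to the edge $v_{m-j}v_{m-j+1}$ lying outside $R$. Proceeding along $P_{j-1}$, at each internal vertex $u$ of $P_{j-1}$ I traverse the fan of faces incident to $u$ on the exterior side of $R$; each third vertex of such a face is a neighbor of a vertex in $V_{j-1}$ and lies outside $R$. A vertex of this kind cannot be in $V_{j-1}$ (else it could be appended to $P_{j-1}$, contradicting maximality of $R$) and cannot be in $V_{j-2}$ (a shortest path to such a vertex from $x$ would have to cross $P_{j-1}$, forcing distance at least $j-1$). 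So every such vertex lies in $V_j$, giving a walk in $V_j$ that ends at $v_{m+j}$ via the triangle incident to $v_{m+j-1}v_{m+j}$. Extract a path $P_j$ from this walk. For the length bound, each internal vertex of $P_{j-1}$ contributes at least one edge of $P_j$, each edge of $P_{j-1}$ contributes at least one more, and the two ``cap'' edges adjacent to $v_{m\pm j}$ contribute two additional edges; a short count yields $|P_j|\ge |P_{j-1}|+2\ge 2j$.

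\textbf{Main obstacle.} The delicate point is guaranteeing that the third vertices of the exterior fans really do lie in $V_j$, rather than slipping into $V_{j-1}$ or $V_{j-2}$. This is precisely where the maximality of $R$ (together with the observation that a vertex in $V_{j-2}$ would force a path crossing $P_{j-1}$) is used. Getting these two exclusions clean, and handling the boundary situation where a fan vertex lies on $C'$ itself, is the part of the argument that will require the most care.
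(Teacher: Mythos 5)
Your overall plan (BFS layers around $x$, inductively pushing a path $P_j$ outward through fans of triangles, then counting) does mirror the paper's, but there are two serious problems with the proposal, the first of which is fatal as written.

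\textbf{The length bound $|P_j|\ge 2j$ is not actually established.} In the paper this bound is proved \emph{once and for all}, before any induction on $j$, by the following clean observation specific to $G'$: a $v_0,v_0'$-path in the disk $G'$ closes up to a non-contractible cycle in $G$, hence has length at least $|C|$; then for any $v_{m-j},v_{m+j}$-walk $W$ in $G'$ one routes $v_0,\ldots,v_{m-j},W,v_{m+j},\ldots,v_{|C|}=v_0'$ and extracts a $v_0,v_0'$-path, forcing $(m-j)+|W|+(|C|-m-j)\ge|C|$, i.e.\ $|W|\ge 2j$. After that, the induction only needs to produce \emph{some} path in $V_j$ from $v_{m-j}$ to $v_{m+j}$; the length is free. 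Your proposal does not prove this. Your ``locally geodesic'' observation only establishes $d_{G'}(v_m,v_{m\pm j})=j$ (so $v_{m\pm j}\in V_j$); and even if you tried to run the exchange argument between $v_{m-j}$ and $v_{m+j}$ directly, the shorter arc of $C$ between them has length $\min(2j,|C|-2j)$, which is $<2j$ once $j>|C|/4$, so the argument in $G$ gives nothing there. The needed bound is a fact about $G'$ (where the cut prevents ``wrapping around''), not about $G$. Your fallback, the inductive edge count ``$|P_j|\ge|P_{j-1}|+2$'', is not sound: you first build a \emph{walk} $W$ out of the fans and then extract a path $P_j$ from it, and extracting a path can discard arbitrarily many edges, so a count of fan contributions bounds $|W|$, not $|P_j|$.

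\textbf{The $V_{j-1}$ exclusion via ``maximality of $R$'' has a gap, though a repairable one.} You claim that a fan vertex $w$ outside $R$ with $w\in V_{j-1}$ ``could be appended to $P_{j-1}$.'' That only works when $w$ is the third vertex of a triangle on an edge $y_iy_{i+1}$ of $P_{j-1}$ (replace $y_iy_{i+1}$ by $y_i,w,y_{i+1}$). A vertex in the middle of the fan at a single $y_i$ is adjacent to $y_i$ only, and there is no evident way to reroute $P_{j-1}$ through it while keeping endpoints and staying in $V_{j-1}$. Fortunately you do not need the maximality trick at all: the same crossing argument you used for $V_{j-2}$ already excludes $V_{j-1}$. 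A shortest $x,w$-path, followed along BFS layers, must meet $P_{j-1}$ at some vertex $y_\ell$, which lies in $V_{j-1}$; since $w$ lies strictly outside $R$ (so $w\ne y_\ell$) the path continues at least one more edge, giving $d(x,w)\ge j$. This is essentially what the paper does: it chooses $P_j$ of \emph{minimum length} (hence induced, which is used to control the triangles meeting $P_j$ in more than one vertex) and then shows the far face contains no vertex of $\bigcup_{i\le j}V_i$ via exactly this BFS-layer crossing argument.

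In short: keep the fan construction, but replace the inductive length count by the paper's once-and-for-all walk bound through $v_0,v_0'$, and replace the maximality-of-$R$ optimization by the minimum-length (hence induced) choice of $P_j$ plus the BFS crossing argument.
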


\begin{proof}
Any path $P$ in $G'$ between opposite vertices $v_j,v_j'$ of $C$ corresponds to
non-contractible cycle in $C$; then by the choice of $C$, the length of $P$ is at least $|C|$.
Recall that for any vertices $u,v$, any $u,v$-walk contains a $u,v$-path.

Suppose that $W$ is a $v_i,v_j$-walk (or path) in $G'$ with $0\le i<j\le m$.
Then $v_0,\ldots,v_i,W,v_j,\ldots,v_{|C|}=v_0'$ is
is a $v_0,v_0'$-walk in $G'$, which contains a $v_0,v_0'$-path $P$ in $G'$.
Since the length of $P$ must be at least $|C|$, the length of $W$ must be at least $j-i$.

For $0\le j\le m$, we can apply the previous observation where the indices are
$(m-j,m)$ or $(m,m+j)$ and conclude that $v_{m\pm j}\not\in V_i$ for any $i<j$.
Since $v_m,v_{m-1},\ldots,v_{m-j}$ and $v_m,v_{m+1},\ldots,v_{m+j}$ are paths of
length $j$, we have $v_{m\pm j}\in V_j$ for all $0\le j\le m$.
If we apply the same observation where the indices are $m-j$ and $m+j$,
we can conclude that any $v_{m-j},v_{m+j}$-walk (or path) in $G'$
has length at least $2j$, for any $0\le j\le m$.

Thus, it remains to show that $G'$ contains a $v_{m-j},v_{m+j}$-path $P_j$
with $V(P_j)\subseteq V_j$, for all $0\le j\le m$.  We prove this by induction.
It is trivial for $j=0$ since $V_0=\{x\}$.
Assume that it is true for fixed $j$ with $0\le j<m$.  We must prove it for $j+1$.

Without loss of generality, we may assume that $P_j$ is
$v_{m-j},v_{m+j}$-path with vertices in $V_{j}$ of minimum length.
Then $P_j$ is an {\em induced path},
that is, there is no edge between non-consecutive vertices of $P_j$.

$P_j$ divides the triangulated disk into two faces; let $f$ be the face that does not
contain $x$.
For any $y_k\in V_k$, there is a path $y_k,y_{k-1},\ldots,y_0$ with each $y_i\in V_i$.
If $y_k$ is in $f$, then this path must intersect $P_j$ at $y_i$ with $0\le i< k$,
which implies that $k> j$.
Therefore, $f$ contains no vertex of $\bigcup_{0\le i\le j}V_i$.

Let $P_j=y_1,y_2,\ldots,y_p$, where $v_{m-j}=y_1$ and $y_p=v_{m+j}$.
A triangle in $f$ with more than one endpoint on $P_j$ must intersect $V(P_j)$
at two consecutive endpoints $y_i,y_{i+1}$,
since $P_j$ is an induced path and $G'$ has no multiple edges.
Every edge $y_iy_{i+1}$ of $P_j$ is incident to exactly one triangle in $f$; let $z_i$
be its third vertex, which is in $f$.  Let $z_0=v_{m-j-1}$ and let $z_p=v_{m+j+1}$.
(See Figure~\ref{hope it works too}.)
For each $y_i$ in $P_j$, there is a set of triangles in $f$ such that $y_i$ is their only vertex in $P_j$, naturally ordered by the embedding near $y_i$; removing $y_i$ yields a $z_{i-1},z_i$-walk in $f$.  Concatenating these walks gives a $z_0,z_p$-walk $W$ in $f$.
Since every vertex of $W$ is incident to a vertex of $P_j$, and $W$ is in $f$, the vertices of $W$ must be contained in $V_{j+1}$.  $W$ contains a $z_0,z_p$-path; let this be $P_{j+1}$.
\qed
\end{proof}

\begin{figure}[ht]
\begin{center}
\definecolor{mygray}{gray}{0.85}
\begin{pspicture}(8,5)
  \psline(-2,0)(10,0)
  \psarc(4,0){4}{0}{180}
  \psarc[fillstyle=solid,fillcolor=mygray](4,0){2}{0}{180}
  \rput(4,0.8){(not $f$)}
  \psdots(4,0)
  \rput(4,-0.4){$v_m$}
  \rput(2,-0.2){$y_1$}
  \rput(2,-0.6){$(v_{m-j})$}
  \rput(6,-0.2){$y_p$}
  \rput(6,-0.6){$(v_{m+j})$}
  \rput(0,-0.23){$z^0$}
  \rput(0,-0.6){$(v_{m-j-1)})$}
  \rput(8,-0.27){$z_p$}
  \rput(8,-0.6){$(v_{m+j+1})$}
  \psline(2,0)(0.2,1.2)
  \rput(-0.15,1.25){$z_1$}
  \psline(0.2,1.2)(2.4,1.2)
  \rput(2.55,1.1){$y_2$}
  \psline(2.4,1.2)(0.8,2.4)
  \rput(0.5,2.55){$z_2$}
  \psline(3.5,1.9)(4,4)
  \psline(4.5,1.9)(4,4)
  \psline(3.5,1.9)(2.9,3.85)
  \psline(4.5,1.9)(5.1,3.85)
  \rput(3.5,1.6){$y_i$}
  \rput(4.5,1.6){$y_{i+1}$}
  \rput(2.8,4.1){$z_{i-1}$}
  \rput(5.3,4.1){$z_{i+1}$}
  \rput(4.1,4.28){$z_i$}
  \rput(6,1){$P_j$}
  \psarc[linestyle=dotted](4,0){3}{15}{60}
  \psarc[linestyle=dotted](4,0){3}{115}{135}

\end{pspicture}
\end{center}
\caption{Construction of $P_{j+1}$ in $f$, when $P_j$ is assumed to be of minimum length}
\label{hope it works too}
\end{figure}
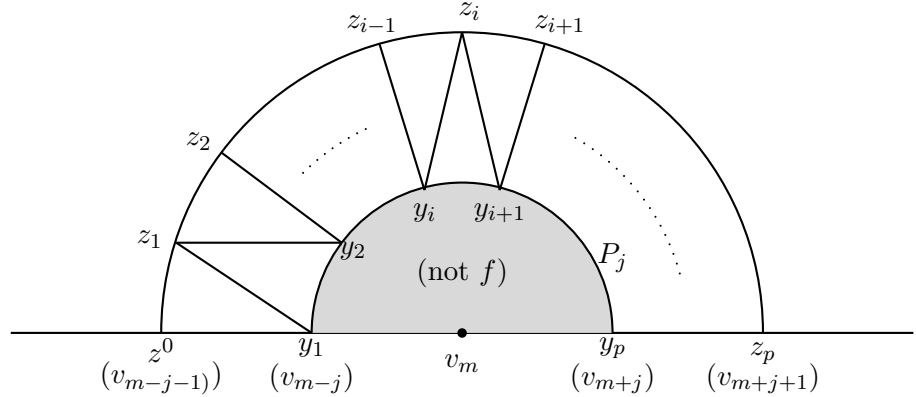

\begin{lemma}\label{proj-plane-noncontractible-cycle}
Any $n$-vertex triangulation on the projective plane has a non-contractible cycle of length less than or equal to $2\sqrt{n}-1$.
\end{lemma}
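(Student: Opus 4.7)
The plan is to run a BFS from $x=v_m$ in $G'$, lower-bound $|V(G')|$ by counting vertices in each distance layer $V_j$, and then use the identity $n = |V(G')|-|C|$ (since cutting along $C$ duplicated its $|C|$ vertices) to deduce the claimed bound on $|C|$. Lemma~\ref{proj-plane} directly provides a path of length at least $2j$ in $V_j$ for every $0\le j\le m$, so $|V_j|\ge 2j+1$ in that range, and summing yields
\[
\sum_{j=0}^{m} |V_j| \;\ge\; \sum_{j=0}^{m}(2j+1) \;=\; (m+1)^2.
\]

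The key remaining step is to locate the ``other-side'' copies $v_k'$ of the boundary vertices in the BFS layering. By the triangle inequality, together with $d(v_k,v_k')\ge |C|$ (because each $v_k,v_k'$-path in $G'$ closes up to a non-contractible cycle of $G$) and $d(v_m,v_k) = |k-m|$ (the equality established in the proof of Lemma~\ref{proj-plane}), one obtains $d(v_m,v_k') \ge |C|-|k-m|$; the matching upper bound follows by traversing $C'$ the other way from $v_m$. Hence $v_k' \in V_{|C|-|k-m|}$ for each $0 \le k\le |C|-1$. Counting these opposite copies by layer shows that most layers $V_j$ with $|C|-m \le j < |C|$ receive two new vertices $v_{m\pm(|C|-j)}'$, while $V_{|C|}$ receives $v_m'$.

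When $|C|=2m+1$ is odd, all $2m+1$ such copies lie strictly beyond $V_m$, yielding $|V(G')|\ge (m+1)^2 + (2m+1) = (m+1)(m+2)$, hence $n \ge (m+1)^2$ and $|C|\le 2\sqrt{n}-1$ directly. When $|C|=2m$ is even, the copy $v_0'$ lies in $V_m$ and is already counted as the far endpoint of the path $P_m$, so only $2m-1$ genuinely new vertices are gained beyond $V_m$; this produces the slightly weaker estimate $|C|\le 2\sqrt{n+1}-2$, which a brief algebraic comparison shows is at most $2\sqrt{n}-1$ whenever $n\ge 1$ (equivalent to $4\sqrt{n}\ge 3$). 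The main obstacle is exactly this even case: the path furnished by Lemma~\ref{proj-plane} already ``absorbs'' $v_0'$ into $V_m$, so the naïve layer count is short by a constant and must be salvaged by comparing $2\sqrt{n+1}-2$ with $2\sqrt{n}-1$ at the end.
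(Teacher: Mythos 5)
Your proof is correct and follows essentially the same approach as the paper: both use Lemma~\ref{proj-plane} to lower-bound the sizes of the BFS layers $V_0,\ldots,V_m$ of $G'$ rooted at $v_m$, and then convert the resulting vertex count into the desired upper bound on $|C|$. Your version is in fact slightly more careful than the paper's, which passes from the disjointness of $V(P_0),\ldots,V(P_m)$ in $G'$ directly to $n\ge\sum_j|V(P_j)|$ without explicitly addressing the even-$|C|$ case (where $v_0$ and $v_{2m}=v_0'$ are distinct in $G'$ but identified in $G$); your parity split and the final comparison $2\sqrt{n+1}-2\le 2\sqrt{n}-1$ handle this subtlety cleanly and reach the same conclusion.
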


\begin{proof}
Since the vertex sets $V_0,\ldots,V_m$ are disjoint, the vertex sets of the paths
$P_0,\ldots,P_m$ are disjoint.  Therefore,
$$n\ge \sum_{j=0}^{m}|V(P_j)|\ge\sum_{j=0}^{m}(2j+1)=(m+1)^2.$$
Then $2\sqrt{n}-1\ge 2m+1 = 2\lfloor|C|/2\rfloor+1\ge|C|$.
\qed
\end{proof}

\begin{lemma}\label{noncontractible cycle in nonorientable surface}
Any $n$-vertex triangulation on a non-orientable surface with genus $g>1$
has a non-contractible cycle of length less than or equal to $2\sqrt{n}$.
\end{lemma}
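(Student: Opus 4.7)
The plan is to adapt the projective-plane argument of Lemma~\ref{proj-plane-noncontractible-cycle} to the higher-genus non-orientable case. Let $C$ be a minimum non-contractible cycle of $G$ with $|C| = \ell$; we want to show $\ell \le 2\sqrt{n}$. By induction on $n+g$, I may assume $\ell > 2\sqrt{n}$ and derive a contradiction. A case analysis on whether $C$ is one-sided, two-sided non-separating, or two-sided separating is natural: in the last case $C$ splits $G$ into triangulations (after capping each side with an apex) of non-orientable or orientable surfaces with fewer vertices, and the inductive hypothesis (together with the Albertson-Hutchinson bound for orientable sides) produces a shorter non-contractible cycle in one side, contradicting the minimality of $C$. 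Thus $C$ is one-sided or two-sided non-separating.

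Consider first the one-sided case. Cut $G$ along $C$ as in Section~\ref{S:non-orientable} to obtain $G'$ embedded on a surface $S$ (no longer a disk when $g > 1$) with one boundary cycle $C'$ of length $2\ell$, labelled $v_0, v_1, \ldots, v_\ell = v_0', v_1', \ldots, v_\ell' = v_0$. Setting $m = \lfloor \ell/2 \rfloor$, $x = v_m$, and running BFS in $G'$ from $x$ with level sets $V_j$, the key inequality of Lemma~\ref{proj-plane} still holds: any $v_j, v_j'$-path in $G'$ projects to a closed walk in $G$ crossing the one-sided cycle $C$ once, so it is non-contractible and has length at least $\ell$ by minimality of $C$. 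The derived distance bounds, in particular $v_{m-j}, v_{m+j} \in V_j$ for $0 \le j \le m$, follow exactly as before, as does the base case $P_0 = \{x\}$.

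The main obstacle is the inductive construction of a path $P_j \subseteq V_j$ of length at least $2j$. In Lemma~\ref{proj-plane} this relied on $G'$ being a disk, so that a minimum induced $P_j$ separates the disk into two faces and the face not containing $x$ supplies the third vertex of each triangle on an edge of $P_j$ needed to build $P_{j+1}$. When $g > 1$, $S$ is not a disk, but I would show that the BFS up to level $m$ still explores a disk-like region: any fundamental cycle of the BFS tree contained in $N_m[x]$ has length at most $2m+1 \le \ell$, so if it were non-contractible in $G$ it would contradict the minimality of $C$; a more careful argument (keeping track of the boundary $C'$) then shows that $P_j$ together with the arc of $C'$ between $v_{m-j}$ and $v_{m+j}$ bounds an open disk in $S$, so the inductive step proceeds as in Lemma~\ref{proj-plane}. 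Summing $|V(P_j)| \ge 2j+1$ over $0 \le j \le m$ (the projections to $V(G)$ differ from the $V(G')$ counts by at most one duplicated pair of boundary vertices when $\ell$ is even) gives $n \ge (m+1)^2 - 1$, hence $\ell \le 2\sqrt{n}$, the slight loss against the projective-plane bound $2\sqrt{n}-1$ reflecting the extra topology of $S$. The two-sided non-separating case is handled analogously: cut along $C$ to produce two boundary cycles $C_1, C_2$, pick $x$ as a midpoint of a shortest $V(C_1), V(C_2)$-path in $G'$, and run the parallel BFS; $V(C_1), V(C_2)$-paths project to non-contractible closed walks in $G$ since $C$ is non-separating, giving the same bound.
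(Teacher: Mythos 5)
Your approach is genuinely different from the paper's, and unfortunately it has a real gap at its core. The paper does not attempt to extend the BFS/fat-path argument to higher genus at all: it observes that the orientable double cover of $N_g$ is $S_{g-1}$, lifts the $n$-vertex triangulation $G$ to a $2n$-vertex triangulation $H$ of $S_{g-1}$ (not a sphere since $g>1$), applies the Albertson--Hutchinson bound to get a non-contractible cycle of length at most $\sqrt{2\cdot 2n}=2\sqrt{n}$ in $H$, and projects it back to $G$, using the homotopy lifting property to see that the projection is non-contractible. This sidesteps all the topology you are trying to control by hand.

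The crux of your proposal is the sentence ``a more careful argument (keeping track of the boundary $C'$) then shows that $P_j$ together with the arc of $C'$ between $v_{m-j}$ and $v_{m+j}$ bounds an open disk in $S$,'' and that is exactly the step you would need to prove and have not. In the projective-plane lemma, $G'$ is a triangulated disk, so $P_j$ automatically separates it into two faces and one can pick the face not containing $x$; when $g>1$, the cut surface $S$ has non-trivial topology, and $P_j$ need not separate $S$ at all, let alone bound a disk with the boundary arc. Your justification that fundamental cycles of the BFS tree in $N_m[x]$ are short and hence contractible does not yield the separation property you need (contractibility of a family of cycles does not by itself make a subregion a disk, especially when the region abuts a boundary that carries the surface's topology), and the stated bound $2m+1\le\ell$ in fact fails when $\ell$ is even. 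The two-sided non-separating case is handled only by the word ``analogously,'' but there the cut surface has two boundary circles and can still be non-orientable, so the argument would need a separate careful treatment. The surface-separating case via induction is also subtler than stated: you must verify that a non-contractible cycle found in a capped-off side, possibly via Albertson--Hutchinson on an orientable piece, remains non-contractible in $G$, and that the vertex counts of the pieces are actually smaller than $n$. In short, the skeleton of a case analysis is reasonable, but the key disk-structure claim is the whole difficulty and is left unproved; the paper's double-cover argument replaces all of this with a two-line reduction to the orientable case.
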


\begin{proof}
For any non-orientable surface $N_g$, it is known that the double cover of $N_g$ is $S_{g-1}$.
Let $G$ be an $n$-vertex triangulation of $N_g$ with $g>1$, and let $H$ be the double cover of $G$.
Then $H$ is a $2n$-vertex triangulation on $S_{g-1}$, an orientable surface that is not the sphere.
By~\cite{JoanHutchinson}, $H$ has a non-contractible cycle $C_H$ of length at most $\sqrt {2(2n)}=2\sqrt{n}$.

$C_H$ maps to a closed walk $C_G$ on $N_g$.
If $C_G$ is contractible, then there is a homotopy in $S_{g-1}$
from $C_G$ to a point.
It lifts to a homotopy in $N_g$ from $C_H$ to a point
(\cite[Lemma 54.2]{Munkres}, for example).
Then $C_H$ is contractible in $N_g$, which is a contradiction.
Therefore, $C_G$ is a non-contractible closed walk in $G$ of length at most $|C_H|$.
$C_G$ must contain a non-contractible cycle, which has length at most $|C_G|\le |C_H|\le 2\sqrt{n}$.
\qed
\end{proof}

The previous two lemmas complete the proof of Theorem~\ref{T:non}.

\section*{Acknowledgements}

We would like to thank Erin W. Chambers for pointing us in the direction of double covers and Erika L.C. King for allowing us to use two figures from~\cite{ArxivKingMJP}.
We would also like to thank the referees for their careful work, which led to critical improvements in
the paper.

\section{Appendix: Figures for cases with $w=5,7,11$}\label{Appendix}

\begin{figure}[ht]
\begin{center}
\begin{tabular}{ccccc}
\begin{pspicture}(2.4,3.2)
\psset{unit=.6cm}
\multido{\iA=0+1}{5}{%
    \rput(\iA,5.5){$\iA$}}
\multido{\iA=0+1,\iB=5+-1}{6}{%
    \rput(-0.5,\iB){$\iA$}}
  \psgrid[subgriddiv=-1,griddots=6,gridlabels=0pt,gridwidth=.5pt](4,5)
  \multido{\rA=0+1,\rB=1+1}{4}{%
    \psline[linestyle=dotted](\rA,0)(\rB,1)}
  \multido{\rA=0+1,\rB=1+1}{4}{%
    \psline[linestyle=dotted](\rA,1)(\rB,2)}
  \multido{\rA=0+1,\rB=1+1}{4}{%
    \psline[linestyle=dotted](\rA,2)(\rB,3)}
  \multido{\rA=0+1,\rB=1+1}{4}{%
    \psline[linestyle=dotted](\rA,3)(\rB,4)}
  \multido{\rA=0+1,\rB=1+1}{4}{%
    \psline[linestyle=dotted](\rA,4)(\rB,5)}
  \psdots(1,1)(1,3)(3,0)(4,3)
  \psdots[dotstyle=o](3,5)
  \pspolygon(0,0)(1,0)(2,1)(2,2)(1,2)(0,1)
  \pspolygon(0,2)(1,2)(2,3)(2,4)(1,4)(0,3)
  \psline(2,0)(3,1)(4,1)(4,0)
  \psline(2,5)(2,4)(3,4)(4,5)
  \psline(4,4)(3,3)(3,2)(4,2)
  \psline(0,5)(1,5)
  \psline(0,3)(0,4)
\end{pspicture}
& \qquad\qquad &
\begin{pspicture}(3.6,3.2)
\psset{unit=.6cm}
\multido{\iA=0+1}{7}{%
    \rput(\iA,5.5){$\iA$}}
\multido{\iA=0+1,\iB=5+-1}{6}{%
    \rput(-0.5,\iB){$\iA$}}
  \psgrid[subgriddiv=1,griddots=6,gridlabels=0pt,gridwidth=.5pt](6,5)
  \multido{\rA=0+1,\rB=1+1}{6}{%
    \psline[linestyle=dotted](\rA,0)(\rB,1)}
  \multido{\rA=0+1,\rB=1+1}{6}{%
    \psline[linestyle=dotted](\rA,1)(\rB,2)}
  \multido{\rA=0+1,\rB=1+1}{6}{%
    \psline[linestyle=dotted](\rA,2)(\rB,3)}
  \multido{\rA=0+1,\rB=1+1}{6}{%
    \psline[linestyle=dotted](\rA,3)(\rB,4)}
  \multido{\rA=0+1,\rB=1+1}{6}{%
    \psline[linestyle=dotted](\rA,5)(\rB,4)}
  \psdots(1,4)(2,2)(3,0)(5,3)(6,1)
  \psdots[dotstyle=o](3,5)
  \pspolygon(1,1)(2,1)(3,2)(3,3)(2,3)(1,2)
  \pspolygon(4,2)(5,2)(6,3)(6,4)(5,4)(4,3)
  \pspolygon(0,3)(1,3)(2,4)(1,5)(0,5)
  \psline(2,0)(3,1)(4,1)(4,0)
  \psline(2,5)(3,4)(4,4)(4,5)
  \psline(6,2)(5,1)(5,0)(6,0)
  \psline(5,5)(6,5)
  \psline(0,0)(1,0)
  \psline(0,1)(0,2)
\end{pspicture}
& \qquad\qquad &
\begin{pspicture}(3.0,3.2)
\psset{unit=.6cm}
\multido{\iA=0+1}{6}{%
    \rput(\iA,5.5){$\iA$}}
\multido{\iA=0+1,\iB=5+-1}{6}{%
    \rput(-0.5,\iB){$\iA$}}
  \psgrid[subgriddiv=1,griddots=6,gridlabels=0pt,gridwidth=.5pt](5,5)
  \multido{\rA=0+1,\rB=1+1}{5}{%
    \psline[linestyle=dotted](\rA,0)(\rB,1)}
  \multido{\rA=0+1,\rB=1+1}{5}{%
    \psline[linestyle=dotted](\rA,1)(\rB,2)}
  \multido{\rA=0+1,\rB=1+1}{5}{%
    \psline[linestyle=dotted](\rA,2)(\rB,3)}
  \multido{\rA=0+1,\rB=1+1}{5}{%
    \psline[linestyle=dotted](\rA,4)(\rB,3)}
  \multido{\rA=0+1,\rB=1+1}{5}{%
    \psline[linestyle=dotted](\rA,5)(\rB,4)}
  \psdots(0,3)(2,4)(1,1)(4,0)(4,2)
  \psdots[dotstyle=o](4,5)
  \pspolygon(0,0)(1,0)(2,1)(2,2)(1,2)(0,1)
  \pspolygon(3,1)(4,1)(5,2)(5,3)(4,3)(3,2)
  \pspolygon(2,3)(3,3)(3,4)(2,5)(1,5)(1,4)
  \psline(0,2)(1,3)(0,4)
  \psline(3,0)(4,1)(5,1)(5,0)
  \psline(3,5)(4,4)(5,4)(5,5)
  \psline(1,0)(2,0)
  \psline(0,5)(1,5)
\end{pspicture}
\end{tabular}
\end{center}
\caption{Dominating sets for $w=5$, with $k=0$ (left), $k=1$ (center), and $k=2$ (right)}
\label{F:w=5 k=012}
\end{figure}
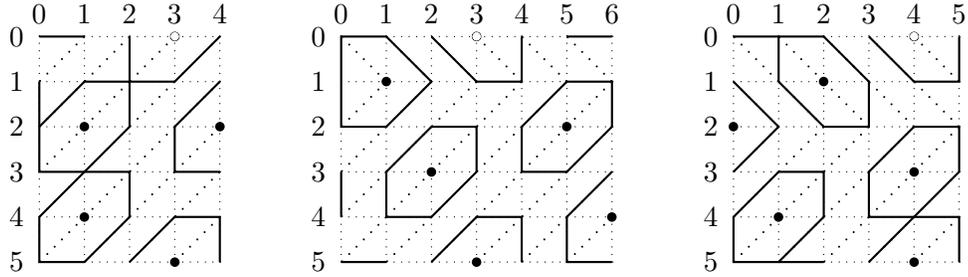

\begin{figure}[ht]
\begin{center}
\begin{tabular}{ccc}
\begin{pspicture}(3.6,4.4)
\psset{unit=.6cm}
\multido{\iA=0+1}{7}{%
    \rput(\iA,7.5){$\iA$}}
\multido{\iA=0+1,\iB=7+-1}{8}{%
    \rput(-0.5,\iB){$\iA$}}
  \psgrid[subgriddiv=1,griddots=6,gridlabels=0pt,gridwidth=.5pt](6,7)
  \multido{\rA=0+1,\rB=1+1}{6}{%
    \psline[linestyle=dotted](\rA,0)(\rB,1)}
  \multido{\rA=0+1,\rB=1+1}{6}{%
    \psline[linestyle=dotted](\rA,1)(\rB,2)}
  \multido{\rA=0+1,\rB=1+1}{6}{%
    \psline[linestyle=dotted](\rA,2)(\rB,3)}
  \multido{\rA=0+1,\rB=1+1}{6}{%
    \psline[linestyle=dotted](\rA,3)(\rB,4)}
  \multido{\rA=0+1,\rB=1+1}{6}{%
    \psline[linestyle=dotted](\rA,4)(\rB,5)}
  \multido{\rA=0+1,\rB=1+1}{6}{%
    \psline[linestyle=dotted](\rA,5)(\rB,6)}
  \multido{\rA=0+1,\rB=1+1}{6}{%
    \psline[linestyle=dotted](\rA,6)(\rB,7)}
  \psdots(0,6)(1,4)(2,2)(3,0)(4,5)(5,3)(6,1)
  \psdots[dotstyle=o](3,7)
  \pspolygon(1,1)(2,1)(3,2)(3,3)(2,3)(1,2)
  \pspolygon(0,3)(1,3)(2,4)(2,5)(1,5)(0,4)
  \pspolygon(3,4)(4,4)(5,5)(5,6)(4,6)(3,5)
  \pspolygon(4,2)(5,2)(6,3)(6,4)(5,4)(4,3)
  \psline(0,5)(1,6)(1,7)(0,7)
  \psline(2,7)(2,6)(3,6)(4,7)
  \psline(2,0)(3,1)(4,1)(4,0)
  \psline(6,0)(5,0)(5,1)(6,2)
  \psline(0,0)(1,0)
  \psline(0,1)(0,2)
  \psline(5,7)(6,7)
  \psline(6,6)(6,5)
\end{pspicture}
& \qquad\qquad &
\begin{pspicture}(3.6,4.4)
\psset{unit=.6cm}
\multido{\iA=0+1}{7}{%
    \rput(\iA,7.5){$\iA$}}
\multido{\iA=0+1,\iB=7+-1}{8}{%
    \rput(-0.5,\iB){$\iA$}}
  \psgrid[subgriddiv=1,griddots=6,gridlabels=0pt,gridwidth=.5pt](6,7)
  \multido{\rA=0+1,\rB=1+1}{6}{%
    \psline[linestyle=dotted](\rA,0)(\rB,1)}
  \multido{\rA=0+1,\rB=1+1}{6}{%
    \psline[linestyle=dotted](\rA,1)(\rB,2)}
  \multido{\rA=0+1,\rB=1+1}{6}{%
    \psline[linestyle=dotted](\rA,2)(\rB,3)}
  \multido{\rA=0+1,\rB=1+1}{6}{%
    \psline[linestyle=dotted](\rA,3)(\rB,4)}
  \multido{\rA=0+1,\rB=1+1}{6}{%
    \psline[linestyle=dotted](\rA,4)(\rB,5)}
  \multido{\rA=0+1,\rB=1+1}{6}{%
    \psline[linestyle=dotted](\rA,5)(\rB,6)}
  \multido{\rA=0+1,\rB=1+1}{6}{%
    \psline[linestyle=dotted](\rA,7)(\rB,6)}
  \psdots(0,6)(1,4)(2,2)(3,0)(4,5)(5,3)(6,1)(1,7)
  \psdots[dotstyle=o](1,0)(3,7)
  \pspolygon(1,1)(2,1)(3,2)(3,3)(2,3)(1,2)
  \pspolygon(0,3)(1,3)(2,4)(2,5)(1,5)(0,4)
  \pspolygon(3,4)(4,4)(5,5)(5,6)(4,6)(3,5)
  \pspolygon(4,2)(5,2)(6,3)(6,4)(5,4)(4,3)
  \psline(0,5)(1,6)(0,7)
  \psline(2,7)(3,6)(4,6)(4,7)
  \psline(0,7)(1,6)(2,6)(2,7)
  \psline(2,0)(3,1)(4,1)(4,0)
  \psline(0,0)(1,1)(2,1)(2,0)
  \psline(6,0)(5,0)(5,1)(6,2)
  \psline(6,7)(6,5)
  \psline(0,1)(0,2)
  \psline(5,7)(6,7)
  \psline(6,6)(6,5)
\end{pspicture}
\end{tabular}
\end{center}
\caption{Dominating sets for $w=7$ with $k=0$ (left) and $k=1$ (right)}
\label{F:w=7 k=01}
\end{figure}

\begin{figure}[ht]
\begin{center}
\begin{tabular}{ccc}
\begin{pspicture}(3.6,4.4)
\psset{unit=.6cm}
\multido{\iA=0+1}{7}{%
    \rput(\iA,7.5){$\iA$}}
\multido{\iA=0+1,\iB=7+-1}{8}{%
    \rput(-0.5,\iB){$\iA$}}
  \psgrid[subgriddiv=1,griddots=6,gridlabels=0pt,gridwidth=.5pt](6,7)
  \multido{\rA=0+1,\rB=1+1}{6}{%
    \psline[linestyle=dotted](\rA,0)(\rB,1)}
  \multido{\rA=0+1,\rB=1+1}{6}{%
    \psline[linestyle=dotted](\rA,1)(\rB,2)}
  \multido{\rA=0+1,\rB=1+1}{6}{%
    \psline[linestyle=dotted](\rA,2)(\rB,3)}
  \multido{\rA=0+1,\rB=1+1}{6}{%
    \psline[linestyle=dotted](\rA,3)(\rB,4)}
  \multido{\rA=0+1,\rB=1+1}{6}{%
    \psline[linestyle=dotted](\rA,4)(\rB,5)}
  \multido{\rA=0+1,\rB=1+1}{6}{%
    \psline[linestyle=dotted](\rA,6)(\rB,5)}
  \multido{\rA=0+1,\rB=1+1}{6}{%
    \psline[linestyle=dotted](\rA,7)(\rB,6)}
  \psdots(0,3)(1,1)(1,6)(3,4)(3,7)(4,2)(5,0)(6,5)
  \psdots[dotstyle=o](3,0)(5,7)
  \pspolygon(0,0)(1,0)(2,1)(2,2)(1,2)(0,1)
  \pspolygon(0,7)(0,6)(1,5)(2,5)(2,6)(1,7)
  \pspolygon(2,3)(3,3)(4,4)(4,5)(3,5)(2,4)
  \pspolygon(3,1)(4,1)(5,2)(5,3)(4,3)(3,2)
  \psline(6,6)(5,6)(5,4)(6,4)
  \psline(-1,6)(0,5)(-1,4)
  \psline(0,4)(1,4)(1,3)(0,2)
  \psline(6,2)(6,3)
  \psline(4,0)(5,1)(6,1)(6,0)
  \psline(4,7)(5,6)(6,6)(6,7)
  \psline(2,0)(3,1)(4,1)(4,0)
  \psline(2,7)(3,6)(4,6)(4,7)
\end{pspicture}
& \qquad\qquad &
\begin{pspicture}(3.6,4.4)
\psset{unit=.6cm}
\multido{\iA=0+1}{7}{%
    \rput(\iA,7.5){$\iA$}}
\multido{\iA=0+1,\iB=7+-1}{8}{%
    \rput(-0.5,\iB){$\iA$}}
  \psgrid[subgriddiv=1,griddots=6,gridlabels=0pt,gridwidth=.5pt](6,7)
  \multido{\rA=0+1,\rB=1+1}{6}{%
    \psline[linestyle=dotted](\rA,0)(\rB,1)}
  \multido{\rA=0+1,\rB=1+1}{6}{%
    \psline[linestyle=dotted](\rA,1)(\rB,2)}
  \multido{\rA=0+1,\rB=1+1}{6}{%
    \psline[linestyle=dotted](\rA,2)(\rB,3)}
  \multido{\rA=0+1,\rB=1+1}{6}{%
    \psline[linestyle=dotted](\rA,3)(\rB,4)}
  \multido{\rA=0+1,\rB=1+1}{6}{%
    \psline[linestyle=dotted](\rA,5)(\rB,4)}
  \multido{\rA=0+1,\rB=1+1}{6}{%
    \psline[linestyle=dotted](\rA,6)(\rB,5)}
  \multido{\rA=0+1,\rB=1+1}{6}{%
    \psline[linestyle=dotted](\rA,7)(\rB,6)}
  \psdots(0,2)(1,0)(1,5)(3,3)(3,6)(4,1)(5,7)(6,4)
  \psdots[dotstyle=o](5,0)(1,7)
  \pspolygon(3,5)(4,5)(4,6)(3,7)(2,7)(2,6)
  \pspolygon(0,6)(0,5)(1,4)(2,4)(2,5)(1,6)
  \pspolygon(2,2)(3,2)(4,3)(4,4)(3,4)(2,3)
  \pspolygon(3,0)(4,0)(5,1)(5,2)(4,2)(3,1)
  \psline(6,5)(5,5)(5,3)(6,3)
  \psline(-1,5)(0,4)(-1,3)
  \psline(0,0)(1,1)(2,1)(2,0)
  \psline(0,7)(1,6)(2,6)(2,7)
  \psline(0,1)(1,2)(1,3)(0,3)
  \psline(6,1)(6,2)
  \psline(4,7)(5,6)(6,6)(6,7)
  \psline(4,0)(5,1)(6,1)(6,0)
\end{pspicture}
\end{tabular}
\end{center}
\caption{Dominating sets for $w=7$ with $k=2$ (left) and $k=3$ (right)}
\label{F:w=7 k=23}
\end{figure}

\begin{figure}[ht]
\begin{center}
\begin{tabular}{ccccc}
\begin{pspicture}(3.6,6.8)
\psset{unit=.6cm}
\multido{\iA=0+1}{7}{%
    \rput(\iA,11.5){$\iA$}}
\multido{\iA=0+1,\iB=11+-1}{12}{%
    \rput(-0.5,\iB){$\iA$}}
  \psgrid[subgriddiv=1,griddots=6,gridlabels=0pt,gridwidth=.5pt](6,11)
  \multido{\rA=0+1,\rB=1+1}{6}{%
    \psline[linestyle=dotted](\rA,0)(\rB,1)}
  \multido{\rA=0+1,\rB=1+1}{6}{%
    \psline[linestyle=dotted](\rA,1)(\rB,2)}
  \multido{\rA=0+1,\rB=1+1}{6}{%
    \psline[linestyle=dotted](\rA,2)(\rB,3)}
  \multido{\rA=0+1,\rB=1+1}{6}{%
    \psline[linestyle=dotted](\rA,3)(\rB,4)}
  \multido{\rA=0+1,\rB=1+1}{6}{%
    \psline[linestyle=dotted](\rA,4)(\rB,5)}
  \multido{\rA=0+1,\rB=1+1}{6}{%
    \psline[linestyle=dotted](\rA,5)(\rB,6)}
  \multido{\rA=0+1,\rB=1+1}{6}{%
    \psline[linestyle=dotted](\rA,6)(\rB,7)}
  \multido{\rA=0+1,\rB=1+1}{6}{%
    \psline[linestyle=dotted](\rA,7)(\rB,8)}
  \multido{\rA=0+1,\rB=1+1}{6}{%
    \psline[linestyle=dotted](\rA,8)(\rB,9)}
  \multido{\rA=0+1,\rB=1+1}{6}{%
    \psline[linestyle=dotted](\rA,9)(\rB,10)}
  \multido{\rA=0+1,\rB=1+1}{6}{%
    \psline[linestyle=dotted](\rA,10)(\rB,11)}
  \psdots(1,0)(0,6)(1,4)(2,2)(3,0)(2,9)(3,7)(4,5)(5,3)(6,1)(5,10)(6,8)
  \psdots[dotstyle=o](1,11)(3,11)
  \pspolygon(1,8)(2,8)(3,9)(3,10)(2,10)(1,9)
  \pspolygon(2,6)(3,6)(4,7)(4,8)(3,8)(2,7)
  \pspolygon(3,4)(4,4)(5,5)(5,6)(4,6)(3,5)
  \pspolygon(4,2)(5,2)(6,3)(6,4)(5,4)(4,3)
  \pspolygon(1,1)(2,1)(3,2)(3,3)(2,3)(1,2)
  \pspolygon(0,3)(1,3)(2,4)(2,5)(1,5)(0,4)
  \pspolygon(4,9)(5,9)(6,10)(6,11)(5,11)(4,10)
  \psline(2,0)(3,1)(4,1)(4,0)
  \psline(2,11)(2,10)(3,10)(4,11)
  \psline(0,0)(1,1)(2,1)(2,0)
  \psline(0,11)(0,10)(1,10)(2,11)
  \psline(6,6)(6,5)
  \psline(6,9)(5,8)(5,7)(6,7)
  \psline(0,8)(0,9)
  \psline(6,2)(5,1)(5,0)(6,0)
  \psline(0,1)(0,2)
  \psline(0,7)(1,7)(1,6)(0,5)
\end{pspicture}
& \qquad\qquad &
\begin{pspicture}(3.6,6.8)
\psset{unit=.6cm}
\multido{\iA=0+1}{7}{%
    \rput(\iA,11.5){$\iA$}}
\multido{\iA=0+1,\iB=11+-1}{12}{%
    \rput(-0.5,\iB){$\iA$}}
  \psgrid[subgriddiv=1,griddots=6,gridlabels=0pt,gridwidth=.5pt](6,11)
  \multido{\rA=0+1,\rB=1+1}{6}{%
    \psline[linestyle=dotted](\rA,0)(\rB,1)}
  \multido{\rA=0+1,\rB=1+1}{6}{%
    \psline[linestyle=dotted](\rA,1)(\rB,2)}
  \multido{\rA=0+1,\rB=1+1}{6}{%
    \psline[linestyle=dotted](\rA,2)(\rB,3)}
  \multido{\rA=0+1,\rB=1+1}{6}{%
    \psline[linestyle=dotted](\rA,3)(\rB,4)}
  \multido{\rA=0+1,\rB=1+1}{6}{%
    \psline[linestyle=dotted](\rA,4)(\rB,5)}
  \multido{\rA=0+1,\rB=1+1}{6}{%
    \psline[linestyle=dotted](\rA,5)(\rB,6)}
  \multido{\rA=0+1,\rB=1+1}{6}{%
    \psline[linestyle=dotted](\rA,6)(\rB,7)}
  \multido{\rA=0+1,\rB=1+1}{6}{%
    \psline[linestyle=dotted](\rA,7)(\rB,8)}
  \multido{\rA=0+1,\rB=1+1}{6}{%
    \psline[linestyle=dotted](\rA,8)(\rB,9)}
  \multido{\rA=0+1,\rB=1+1}{6}{%
    \psline[linestyle=dotted](\rA,9)(\rB,10)}
  \multido{\rA=0+1,\rB=1+1}{6}{%
    \psline[linestyle=dotted](\rA,11)(\rB,10)}
  \psdots(1,10)(0,6)(1,4)(2,2)(3,0)(2,9)(3,7)(4,5)(5,3)(6,1)(5,10)(6,8)
  \psdots[dotstyle=o](3,11)
  \pspolygon(1,8)(2,8)(3,9)(3,10)(2,10)(1,9)
  \pspolygon(2,6)(3,6)(4,7)(4,8)(3,8)(2,7)
  \pspolygon(3,4)(4,4)(5,5)(5,6)(4,6)(3,5)
  \pspolygon(4,2)(5,2)(6,3)(6,4)(5,4)(4,3)
  \pspolygon(1,1)(2,1)(3,2)(3,3)(2,3)(1,2)
  \pspolygon(0,3)(1,3)(2,4)(2,5)(1,5)(0,4)
  \pspolygon(4,9)(5,9)(6,10)(5,11)(4,11)
  \pspolygon(0,9)(1,9)(2,10)(1,11)(0,11)
  \psline(0,0)(1,0)
  \psline(2,0)(3,1)(4,1)(4,0)
  \psline(2,11)(2,10)(3,10)(4,11)
  \psline(6,2)(5,1)(5,0)(6,0)
  \psline(0,1)(0,2)
  \psline(6,6)(6,5)
  \psline(6,9)(5,8)(5,7)(6,7)
  \psline(0,8)(0,9)
  \psline(5,11)(6,11)
  \psline(0,7)(1,7)(1,6)(0,5)
\end{pspicture}
& \qquad\qquad &
\begin{pspicture}(2.4,6.8)
\psset{unit=.6cm}
\multido{\iA=0+1}{5}{%
    \rput(\iA,11.5){$\iA$}}
\multido{\iA=0+1,\iB=11+-1}{12}{%
    \rput(-0.5,\iB){$\iA$}}
  \psgrid[subgriddiv=1,griddots=6,gridlabels=0pt,gridwidth=.5pt](4,11)
  \multido{\rA=0+1,\rB=1+1}{4}{%
    \psline[linestyle=dotted](\rA,0)(\rB,1)}
  \multido{\rA=0+1,\rB=1+1}{4}{%
    \psline[linestyle=dotted](\rA,1)(\rB,2)}
  \multido{\rA=0+1,\rB=1+1}{4}{%
    \psline[linestyle=dotted](\rA,2)(\rB,3)}
  \multido{\rA=0+1,\rB=1+1}{4}{%
    \psline[linestyle=dotted](\rA,3)(\rB,4)}
  \multido{\rA=0+1,\rB=1+1}{4}{%
    \psline[linestyle=dotted](\rA,4)(\rB,5)}
  \multido{\rA=0+1,\rB=1+1}{4}{%
    \psline[linestyle=dotted](\rA,5)(\rB,6)}
  \multido{\rA=0+1,\rB=1+1}{4}{%
    \psline[linestyle=dotted](\rA,6)(\rB,7)}
  \multido{\rA=0+1,\rB=1+1}{4}{%
    \psline[linestyle=dotted](\rA,7)(\rB,8)}
  \multido{\rA=0+1,\rB=1+1}{4}{%
    \psline[linestyle=dotted](\rA,8)(\rB,9)}
  \multido{\rA=0+1,\rB=1+1}{4}{%
    \psline[linestyle=dotted](\rA,10)(\rB,9)}
  \multido{\rA=0+1,\rB=1+1}{4}{%
    \psline[linestyle=dotted](\rA,11)(\rB,10)}
  \psdots(1,10)(1,7)(1,4)(2,2)(3,6)(3,10)(4,1)(4,4)(4,8)
  \pspolygon(1,1)(2,1)(3,2)(3,3)(2,3)(1,2)
  \pspolygon(0,3)(1,3)(2,4)(2,5)(1,5)(0,4)
  \pspolygon(0,6)(1,6)(2,7)(2,8)(1,8)(0,7)
  \pspolygon(2,5)(3,5)(4,6)(4,7)(3,7)(2,6)
  \pspolygon(1,9)(2,9)(2,10)(1,11)(0,11)(0,10)
  \pspolygon(3,9)(4,9)(4,10)(3,11)(2,11)(2,10)
  \psline(4,0)(3,0)(3,1)(4,2)
  \psline(4,3)(3,3)(3,4)(4,5)
  \psline(4,7)(3,7)(3,8)(4,9)
  \psline(0,1)(0,2)
  \psline(0,4)(0,5)
  \psline(0,8)(0,9)
  \psline(0,0)(1,0)
  \psline(2,0)(3,0)
  \psline(3,11)(4,11)
\end{pspicture}
\end{tabular}
\end{center}
\caption{Dominating sets for $w=11$ with $k=0$ (left), $k=1$ (center), and $k=2$ (right)}
\label{F:w=11 k=012}
\end{figure}

\begin{figure}[ht]
\begin{center}
\begin{tabular}{ccccc}
\begin{pspicture}(3.6,6.8)
\psset{unit=.6cm}
\multido{\iA=0+1}{7}{%
    \rput(\iA,11.5){$\iA$}}
\multido{\iA=0+1,\iB=11+-1}{12}{%
    \rput(-0.5,\iB){$\iA$}}
  \psgrid[subgriddiv=1,griddots=6,gridlabels=0pt,gridwidth=.5pt](6,11)
  \multido{\rA=0+1,\rB=1+1}{6}{%
    \psline[linestyle=dotted](\rA,0)(\rB,1)}
  \multido{\rA=0+1,\rB=1+1}{6}{%
    \psline[linestyle=dotted](\rA,1)(\rB,2)}
  \multido{\rA=0+1,\rB=1+1}{6}{%
    \psline[linestyle=dotted](\rA,2)(\rB,3)}
  \multido{\rA=0+1,\rB=1+1}{6}{%
    \psline[linestyle=dotted](\rA,3)(\rB,4)}
  \multido{\rA=0+1,\rB=1+1}{6}{%
    \psline[linestyle=dotted](\rA,4)(\rB,5)}
  \multido{\rA=0+1,\rB=1+1}{6}{%
    \psline[linestyle=dotted](\rA,5)(\rB,6)}
  \multido{\rA=0+1,\rB=1+1}{6}{%
    \psline[linestyle=dotted](\rA,6)(\rB,7)}
  \multido{\rA=0+1,\rB=1+1}{6}{%
    \psline[linestyle=dotted](\rA,7)(\rB,8)}
  \multido{\rA=0+1,\rB=1+1}{6}{%
    \psline[linestyle=dotted](\rA,9)(\rB,8)}
  \multido{\rA=0+1,\rB=1+1}{6}{%
    \psline[linestyle=dotted](\rA,10)(\rB,9)}
  \multido{\rA=0+1,\rB=1+1}{6}{%
    \psline[linestyle=dotted](\rA,11)(\rB,10)}
  \psdots(1,0)(0,4)(2,3)(4,2)(6,1)(1,7)(3,6)(5,5)(0,10)(3,9)(6,8)(4,11)
  \psdots[dotstyle=o](4,0)(1,11)
  \pspolygon(1,2)(2,2)(3,3)(3,4)(2,4)(1,3)
  \pspolygon(3,1)(4,1)(5,2)(5,3)(4,3)(3,2)
  \pspolygon(4,4)(5,4)(6,5)(6,6)(5,6)(4,5)
  \pspolygon(2,5)(3,5)(4,6)(4,7)(3,7)(2,6)
  \psline(0,0)(1,1)(2,1)(2,0)
  \psline(0,11)(1,10)(2,10)(2,11)
  \psline(6,2)(5,1)(5,0)(6,0)
  \psline(0,1)(0,2)
  \psline(0,5)(1,5)(1,4)(0,3)
  \psline(6,3)(6,4)
  \pspolygon(0,6)(1,6)(2,7)(2,8)(1,8)(0,7)
  \pspolygon(2,9)(3,8)(4,8)(4,9)(3,10)(2,10)
  \psline(-1,7)(0,8)(-1,9)
  \psline(6,9)(5,9)(5,7)(6,7)
  \psline(0,9)(1,9)(1,10)(0,11)
  \psline(6,10)(6,11)
  \psline(3,0)(4,1)(5,1)(5,0)
  \psline(3,11)(4,10)(5,10)(5,11)
\end{pspicture}
& \qquad\qquad &
\begin{pspicture}(3.6,6.8)
\psset{unit=.6cm}
\multido{\iA=0+1}{7}{%
    \rput(\iA,11.5){$\iA$}}
\multido{\iA=0+1,\iB=11+-1}{12}{%
    \rput(-0.5,\iB){$\iA$}}
  \psgrid[subgriddiv=1,griddots=6,gridlabels=0pt,gridwidth=.5pt](6,11)
  \multido{\rA=0+1,\rB=1+1}{6}{%
    \psline[linestyle=dotted](\rA,0)(\rB,1)}
  \multido{\rA=0+1,\rB=1+1}{6}{%
    \psline[linestyle=dotted](\rA,1)(\rB,2)}
  \multido{\rA=0+1,\rB=1+1}{6}{%
    \psline[linestyle=dotted](\rA,2)(\rB,3)}
  \multido{\rA=0+1,\rB=1+1}{6}{%
    \psline[linestyle=dotted](\rA,3)(\rB,4)}
  \multido{\rA=0+1,\rB=1+1}{6}{%
    \psline[linestyle=dotted](\rA,4)(\rB,5)}
  \multido{\rA=0+1,\rB=1+1}{6}{%
    \psline[linestyle=dotted](\rA,5)(\rB,6)}
  \multido{\rA=0+1,\rB=1+1}{6}{%
    \psline[linestyle=dotted](\rA,6)(\rB,7)}
  \multido{\rA=0+1,\rB=1+1}{6}{%
    \psline[linestyle=dotted](\rA,8)(\rB,7)}
  \multido{\rA=0+1,\rB=1+1}{6}{%
    \psline[linestyle=dotted](\rA,9)(\rB,8)}
  \multido{\rA=0+1,\rB=1+1}{6}{%
    \psline[linestyle=dotted](\rA,10)(\rB,9)}
  \multido{\rA=0+1,\rB=1+1}{6}{%
    \psline[linestyle=dotted](\rA,11)(\rB,10)}
  \psdots(1,0)(0,4)(2,3)(4,2)(6,1)(1,7)(3,6)(5,5)(2,9)(5,8)(6,10)(4,10)
  \psdots[dotstyle=o](1,11)
  \pspolygon(1,2)(2,2)(3,3)(3,4)(2,4)(1,3)
  \pspolygon(3,1)(4,1)(5,2)(5,3)(4,3)(3,2)
  \pspolygon(4,4)(5,4)(6,5)(6,6)(5,6)(4,5)
  \pspolygon(2,5)(3,5)(4,6)(4,7)(3,7)(2,6)
  \psline(0,0)(1,1)(2,1)(2,0)
  \psline(0,11)(1,10)(2,10)(2,11)
  \psline(6,2)(5,1)(5,0)(6,0)
  \psline(0,1)(0,2)
  \psline(0,5)(1,5)(1,4)(0,3)
  \psline(6,3)(6,4)
  \pspolygon(0,6)(1,6)(2,7)(1,8)(0,8)
  \pspolygon(1,9)(2,8)(3,8)(3,9)(2,10)(1,10)
  \pspolygon(4,8)(5,7)(6,7)(6,8)(5,9)(4,9)
  \pspolygon(3,10)(4,9)(5,9)(5,10)(4,11)(3,11)
  \psline(0,9)(0,10)
  \psline(6,11)(5,11)(5,10)(6,9)
  \psline(3,0)(4,0)
\end{pspicture}
& \qquad\qquad &
\begin{pspicture}(3.6,6.8)
\psset{unit=.6cm}
\multido{\iA=0+1}{7}{%
    \rput(\iA,11.5){$\iA$}}
\multido{\iA=0+1,\iB=11+-1}{12}{%
    \rput(-0.5,\iB){$\iA$}}
  \psgrid[subgriddiv=1,griddots=6,gridlabels=0pt,gridwidth=.5pt](6,11)
  \multido{\rA=0+1,\rB=1+1}{6}{%
    \psline[linestyle=dotted](\rA,0)(\rB,1)}
  \multido{\rA=0+1,\rB=1+1}{6}{%
    \psline[linestyle=dotted](\rA,1)(\rB,2)}
  \multido{\rA=0+1,\rB=1+1}{6}{%
    \psline[linestyle=dotted](\rA,2)(\rB,3)}
  \multido{\rA=0+1,\rB=1+1}{6}{%
    \psline[linestyle=dotted](\rA,3)(\rB,4)}
  \multido{\rA=0+1,\rB=1+1}{6}{%
    \psline[linestyle=dotted](\rA,4)(\rB,5)}
  \multido{\rA=0+1,\rB=1+1}{6}{%
    \psline[linestyle=dotted](\rA,5)(\rB,6)}
  \multido{\rA=0+1,\rB=1+1}{6}{%
    \psline[linestyle=dotted](\rA,7)(\rB,6)}
  \multido{\rA=0+1,\rB=1+1}{6}{%
    \psline[linestyle=dotted](\rA,8)(\rB,7)}
  \multido{\rA=0+1,\rB=1+1}{6}{%
    \psline[linestyle=dotted](\rA,9)(\rB,8)}
  \multido{\rA=0+1,\rB=1+1}{6}{%
    \psline[linestyle=dotted](\rA,10)(\rB,9)}
  \multido{\rA=0+1,\rB=1+1}{6}{%
    \psline[linestyle=dotted](\rA,11)(\rB,10)}
  \psdots(0,4)(2,3)(4,2)(6,1)(0,7)(3,6)(5,5)(1,9)(4,8)(5,10)(2,11)(1,1)
  \psdots[dotstyle=o](2,0)
  \pspolygon(1,2)(2,2)(3,3)(3,4)(2,4)(1,3)
  \pspolygon(3,1)(4,1)(5,2)(5,3)(4,3)(3,2)
  \pspolygon(4,4)(5,4)(6,5)(6,6)(5,6)(4,5)
  \pspolygon(2,5)(3,5)(4,6)(3,7)(2,7)
  \psline(1,0)(2,1)(3,1)(3,0)
  \psline(1,11)(2,10)(3,10)(3,11)
  \psline(6,2)(5,1)(5,0)(6,0)
  \psline(0,1)(0,2)
  \psline(0,5)(1,5)(1,4)(0,3)
  \psline(6,3)(6,4)
  \psline(4,0)(5,0)
  \psline(5,11)(6,11)
  \psline(0,6)(1,6)(1,7)(0,8)
  \psline(6,7)(6,8)
  \pspolygon(0,9)(1,8)(2,8)(2,9)(1,10)(0,10)
  \pspolygon(3,8)(4,7)(5,7)(5,8)(4,9)(3,9)
  \pspolygon(4,10)(5,9)(6,9)(6,10)(5,11)(4,11)
  \psline(0,11)(1,11)
  \pspolygon(0,0)(1,0)(2,1)(2,2)(1,2)(0,1)
\end{pspicture}
\end{tabular}
\end{center}
\caption{Dominating sets for $w=11$ with $k=3$ (left), $k=4$ (center), and $k=5$ (right)}
\label{F:w=11 k=345}
\end{figure}

\end{document}